\newtheorem{theorem}{Theorem}[section]
\newtheorem{lemma}[theorem]{Lemma}
\newtheorem{corollary}[theorem]{Corollary}
\theoremstyle{definition}
\newtheorem{definition}[theorem]{Definition}
\newtheorem{proposition}[theorem]{Proposition}
\theoremstyle{remark}
\newtheorem{remark}[theorem]{Remark}
\DeclareMathOperator{\Hom}{Hom}
\DeclareMathOperator{\Ind}{Ind}
\DeclareMathOperator{\coInd}{coInd}
\DeclareMathOperator{\Res}{Res}
\DeclareMathOperator{\Sym}{Sym}
\DeclareMathOperator{\Gg}{\mathcal{G}}
\DeclareMathOperator{\Hh}{\mathcal{H}}
\DeclareMathOperator{\Kk}{\mathbb{K}}
\DeclareMathOperator{\NS}{NS}
\DeclareMathOperator{\rk}{rk}
\DeclareMathOperator{\End}{End}
\DeclareMathOperator{\Tr}{Tr}
\DeclareMathOperator{\Gal}{Gal}
\DeclareMathOperator{\Kum}{Kum}
\DeclareMathOperator{\Ino}{Ino}
\numberwithin{equation}{section}
\author{Bartosz Naskręcki}
\address{Faculty of Mathematics and Computer Science, Adam Mickiewicz University\\
	Umultowska 87, 61-614 Poznań, Poland\\
	and School of Mathematics, University of Bristol, University Walk, Bristol BS8 1TW, UK\\
	E-mail: nasqret@gmail.com}
\title{On a certain hypergeometric motive of weight 2 and rank 3}
\begin{document}

\begin{abstract}
We study a family of hypergeometric motives $H(\alpha,\beta|t)$ attached to a pair of tuples $\alpha=(1/4,1/2,3/4)$, $\beta=(0,0,0)$. To each such motive we can attach a system of $\ell$--adic realisations with the trace of geometric Frobenius given by the evaluation of the finite field analogue of complex hypergeometric function.  Geometry of elliptic fibrations makes it possible to to realise the motive $H(\alpha,\beta|t)$ as a pure Chow motive attached to a suitable K3 surface $V_{t}$.
\end{abstract}

\maketitle
\section{Introduction}
Finite field analogues of hypergeometric functions introduced independently by John Greene \cite{Greene} and Nicholas Katz \cite{Katz_Exponential} provide an insight into a geometry of algebraic varieties carrying the so-called hypergeometric motives.
In this note we discuss a first step in the explicit realisation of degree 3 and weight 2 hypergeometric motives.  

The hypergeometric sum over a finite field $\mathbb{F}_{q}$ can be described in terms of two tuples of rational numbers $\alpha$ and $\beta$ of length $d$. Let $q$ be a prime power coprime to the common denominator of elements in $\alpha,\beta$. Let $p_{1},\ldots,p_{s}$, $q_{1},\ldots,q_{s}$ be integers such that
\begin{equation}\label{eq:Polynomials_roots_formula}
\prod_{i=1}^{d}\frac{x-e^{2\pi i \alpha_i}}{x-e^{2\pi i \beta_i}}=\frac{(x^{p_{1}}-1 )\cdot\ldots\cdot(x^{p_{r}}-1 )}{(x^{q_{1} }-1)\cdot\ldots\cdot(x^{q_{s}}-1)}.
\end{equation}
Let $M$ be a rational number $\frac{p_{1}^{p_{1}}\cdot\ldots\cdot p_{r}^{p_{r}}}{q_{1}^{q_{1}}\cdot\ldots\cdot q_{s}^{q_{s}}}$ and $D(x)$ a polynomial which is the greatest common divisor of $(x^{p_{1}}-1 )\cdot\ldots\cdot(x^{p_{r}}-1 )$ and $(x^{q_{1} }-1)\cdot\ldots\cdot(x^{q_{s}}-1)$. The multiplicity of $e^{2\pi i (m/(q-1))}$ in $D(x)$ is denoted by $s(m)$.
Let
\begin{equation}\label{eq:hyper_sum}
H_{q}(\alpha,\beta|t) = \frac{(-1)^{r+s}}{1-q}\sum_{m=0}^{q-2}q^{-s(0)+s(m)}\prod_{j}g(p_{j}m) \prod_{k} g(-q_{k}m) \omega (\epsilon M^{-1}t)^{m}
\end{equation}
where $\omega$ is a generators of the character group on $\mathbb{F}^{\times}_{q}$, $g(m)$ is a Gauss sum as described in Section \ref{sec:hypergeo_ident} and $\epsilon = 1$ when $\sum_{i}q_{i}$ is even and $-1$ otherwise.

The sum $H_{q}(\alpha,\beta|t)$ appeared with a different normalisation in \cite{Greene}, \cite{Katz_Exponential} and essentially is a finite field analogue of a hypergeometric series $_{d}F_{d-1}(\alpha,\beta|t)$ where $t$ is a complex variable
\[_{d}F_{d-1}(\alpha,\beta|t)=\sum_{n=0}^{\infty}\frac{(\alpha_{1})_{n}\cdot\ldots\cdot(\alpha_{d})_{n}}{(\beta_{1})_{n}\cdot\ldots\cdot(\beta_{d})_{n}}\]
where $\beta_{1}=1$. These functions are solutions to hypergeometric differential equation of Fuchsian type. These equations have at each point $d$ independent solutions which form a local system when the parameter varies. Monodromy representation attached to such systems were studied and classified by Beukers and Heckman \cite{Beukers_Heckman}. 

The link to motives comes from the fact that often hypergeometric functions correspond to periods of algebraic varieties. Conjecturally, to the hypergeometric datum $(\alpha,\beta)$ one can attach a family of pure motives $H(\alpha,\beta|t)$ parametrized by a rational parameter $t$. The weight and degree can be computed directly from the pair $(\alpha,\beta)$, cf.\cite{Fedorov_Hodge}. It is expected that each $H(\alpha,\beta|t)$ is a Chow motive defined using a suitable variety $X(\alpha,\beta|t)$ and projectors. 
Attached to this datum there is a motivic L--function of $X(\alpha,\beta|t)$. Formula $H_{q}(\alpha,\beta|t)$ should produce a trace of geometric Frobenius at $q$ acting on the $\ell$--adic realisation of the motive $H(\alpha,\beta|t)$.

From the work \cite{Beukers_Cohen_Mellit} it follows that sums \eqref{eq:hyper_sum} can be attached to point counts on certain algebraic varieties, cf. \cite[Thm. 1.5]{Beukers_Cohen_Mellit}.

In this note we focus on a particular family of motives $H(\alpha,\beta|t)$  of degree 3 and weight $2$ determined by $\alpha = (1/4,1/2,3/4)$ and $\beta=(0,0,0)$. In Section \ref{sec:Motive_descr} we explain precisely which Chow motive corresponds to $H(\alpha,\beta|t)$ for each $t\in\mathbb{Q}\setminus\{0\}$.

The datum $(\alpha,\beta)$ can also be described in terms of cyclotomic polynomials $\Phi_{k}$ of degrees $\phi(k)$ according to formula \eqref{eq:Polynomials_roots_formula}. A family of hypergeometric motives attached to pair $(1/4,1/2,3/4)$ and $\beta=(0,0,0)$ is encoded by polynomials $\Phi_{2}\Phi_{4},\Phi_{1}^{3}$ so that we write $H(\alpha,\beta|t) =H(\Phi_{2}\Phi_{4},\Phi_{1}^{3}|t)$. 

A family of varieties $V_{t}$ attached to this motive is given by an affine equation in $\mathbb{A}^{3}$
\begin{equation}\label{eq:Canonical_surface}
V_{t}: xyz(1-(x+y+z))=\frac{1}{256t}
\end{equation}
over an algebraically closed field $K$ of characteristic $0$.
Function field $K_{t}=K(V_{t})$ of $V_{t}$ constitutes a function field of a K3 surface. It is convenient to make a change of variables $s=x+y$. In new variables $x,s,z$ with parameter $t$ we obtain an equation
\begin{equation}\label{eq:can1}
x(s-x)z(1-(s+z))-1/(256t)=0
\end{equation}

This family of K3 surfaces is prominently present in the literature. To name a few it appears in the work of Dolgachev \cite{Dolgachev_Mirror_symmetry} where it is discussed over $\mathbb{C}$ how it is related to a Kummer surface attached to a product of two elliptic curves. We can't use directly the approach described there as all the maps are defined analytically, hence not over $\mathbb{Q}$ and this does not preserve the Galois module structure on etale cohomology groups.

In \cite{Shiga_Mirror} Narumiya and Shiga deal with the same family producing maps over certain finite extensions of $\mathbb{Q}(t)$ which are algebraic but not optimal for our purpose of describing the Galois module structure. Related families of K3 surfaces are also considered in \cite{Doran_Whitcher}.

In Section \ref{sec:params} we introduce an elliptic fibration on family \eqref{eq:Canonical_surface}

\begin{equation}
Y^2=X^3+\frac{1}{4} \left(s^2-1\right)^2 X^2+\frac{s^2 \left(s^2-1\right)^3}{64 t}X.
\end{equation}
This provides a way to compactify the surface \eqref{eq:Canonical_surface}. We prove that those elliptic surfaces are K3 and come with the Shioda--Inose structure, i.e. they provide a degree 2 cover to another K3 surface which is a Kummer surface parametrized explicitly by a pair of elliptic curves
\begin{equation}
E_{1}: y^2=x^3-2x^2+\frac{1}{2}(1-S)x
\end{equation}
\begin{equation}
E_{2}: y^2=x^3+4x^2+2(1+S)x
\end{equation}
where $S=\sqrt{\frac{t-1}{t}}$. We prove that the maps involved respect the Galois structure over $\mathbb{Q}(t)$ and hence for rational parameters we obtain an isomorphism of Galois representations over $\mathbb{Q}$ on $H^2(\cdot,\mathbb{Q}_{\ell})$ which allows us to describe the L-function of the hypergeometric motive. As an application we obtain certain identity between two different hypergeometric sums
\[q^2 (H_{q}(\frac{1}{6},\frac{5}{6};\frac{1}{4},\frac{3}{4}|\frac{2(7 \pm 9S)^2}{(5 \pm 3S)^3}))^2-q= H_{q}(\frac{1}{4},\frac{1}{2},\frac{3}{4};0,0,0|1-S^2)\]
where we restrict to the prime powers $q$ of good reduction for our K3 surfaces.

In Section \ref{sec:Shioda_Inose} we describe explicit realisation of the Shioda--Inose structure on smooth K3 model of \eqref{eq:Canonical_surface}. This allows a precise description of the rank jumps of the N\'{e}ron--Severi rank for special parameters of $t$ as well as the computation of the generic rank. 

In Section \ref{seq:Picard_ranks} we recall some well-known formulas for the Picard rank of Kummer surfaces. Next, in Section \ref{sec:explicit_point}, we describe the N\'{e}ron--Severi lattice of K3 surfaces $V_{t}$ for any parameter $t$, including the cases where the Picard rank jumps. 

Then in Section \ref{sec:hypergeo_ident} we are finally able to prove the hypergeometric identities using explicit geometry of the Shioda--Inose fibration. 
In Section \ref{sec:Motive_descr} we describe in more detail the transcendental part of $\ell$--adic cohomology of surfaces \eqref{eq:Canonical_surface} which involves the symmetric square of cohomology of elliptic curves $E_{1}$ and $E_{2}$. 
Finally, in Section \ref{sec:remarks} we discuss the universality of the formulas involved in terms of modular curves.

In future work we will describe how the method of realizing hypergeometric motives $H(\alpha,\beta|t)$ of low degrees and weight $2$ carry over to other choices of $\alpha$ and $\beta$ when we use specific elliptic fibrations.

\section*{Acknowledgments}
I would like to thank Tim Dokchitser who introduced me to the scenery of hypergeometric motives and for his continuous support and encouragement. I thank also Wojciech Gajda, Jeremy Rickard, David Roberts, Masha Vlasenko and Mark Watkins for their comments and remarks. The author was supported by EPSRC grant EP/M016838/1.

\section{Parametrizations}\label{sec:params}
Using different rational functions from the function field of \eqref{eq:Canonical_surface} we can exhibit several non--equivalent fibrations on \eqref{eq:Canonical_surface}. This is a rather typical situation for elliptic K3 surfaces. We don't try to be exhaustive and we only exhibit certain fibrations, which will be used later.

For elliptic parameter $s=\frac{x+y}{x-y}$ when we eliminate $y$ we obtain the equation
\begin{equation}\label{eq:eq_three_star}
\frac{(s+1)^2}{256 t}+(s-1) x^2 z (s (2 x+z-1)+z-1)=0
\end{equation}
With respect to variables $x,z$ equation \eqref{eq:eq_three_star} transforms over $\mathbb{Q}(t)$ into a Weierstrass form 

\begin{equation}\label{eq:family_rank_19}
Y^2=X^3+\frac{1}{4} \left(s^2-1\right)^2 X^2+\frac{s^2 \left(s^2-1\right)^3}{64 t}X
\end{equation}
with
\begin{align}
	X&=2 (s-1)^2 s x (2 s x+s z-s+z-1),\label{eq:change_coords_can}\\
	Y&=(s-1)^3 s x (4 s x-s-1) (2 s x+s z-s+z-1).\nonumber
\end{align}
For $t\neq 0$ equation defines an elliptic curve over $K(s)$, $K=\mathbb{Q}(t)$. For $t\neq 1$ fibration has singular fibres above $s=-1$ ($III^{*}$), $s=0$ ($I_{4}$), $s=1$ ($III^{*}$), $s^2=\frac{t}{t-1}$ ($I_{1}$) which gives Picard rank at least $19$.
For $t=1$ we have a fibration with singular fibres at $s=-1$ ($III^{*}$), $s=0$ ($I_{4}$), $s=1$ ($III^{*}$), $s=\infty$ ($I_{2}$), hence Picard number equal to $20$. 

Substitution $s\mapsto (s-1)/(s+1)$ leads to an automorphism of the elliptic surface corresponding to \eqref{eq:family_rank_19}. After change of coordinates we get the following Weierstrass model for the generic fibre
\begin{equation}\label{eq:family_rank_19_alt}
Y^2=X^3+4s^2 X^2-\frac{s^3(s-1)^2}{t} X.
\end{equation}
This has the effect of moving bad fibres of type $III^{*}$ to $0$ and $\infty$ and fibre of type $I_{4}$ to $1$, while the $I_{1}$ fibres are moved to $s=1-2t \pm 2 \sqrt{t^2-t}$ for $t\neq 1$. 

We compute the following change of coordinates on \eqref{eq:can1}
\begin{align*}
	X&=t-\frac{s t}{x}\\
	Y&=\frac{8 s t (s-x) (s+2 z-1)}{x}
\end{align*}
which transforms equation \eqref{eq:can1} into
\begin{equation}\label{eq:Weier1}
Y^2=X \left(X^2+X\cdot 2  \left(32 s^4-64 s^3+32 s^2-t\right)+t^2\right).
\end{equation}
For $t\neq 0$ equation \eqref{eq:Weier1} is a Weierstrass model of an elliptic curve defined over $K(s)$. % We denote by $\mathcal{F}_{t}\rightarrow\mathbb{P}^{1}$ the elliptic surface with generic fibre given by \eqref{eq:Weier1}. 
Provided that $t\neq 1$ it has singular fibres above $s=1$ (type $I_{2}$), $s=0$ (type $I_{2}$), $-16 s^4+32 s^3-16 s^2+t=0$ (type $I_{1}$) and $s=\infty$ (type $I_{16}$). For $t=1$ we get a fibration with reduction types: $I_{2}$ for $s=0$, $I_{2}$ for $s=1/2$, $I_{2}$ for $s=1$, $I_{16}$ for $s=\infty$ and $I_{1}$ for $s^2 - s - 1/4=0$.

\begin{remark}
If we consider \eqref{eq:can1} as a curve in variables $s$ and $z$ over $\mathbb{Q}(x)$ then we get a fibration (for sufficiently general t) with fiber $IV^{*}$ at $0$ and $I_{12}$ at $\infty$ and four $I_{1}$ fibres.
\end{remark}

\section{Preliminaries on Shioda--Inose structures}\label{sec:Shioda_Inose}
Let $X$ be any algebraic smooth surface over $\mathbb{C}$. Singular cohomology group $H^{2}(X,\mathbb{C})$ admits a Hodge decomposition
\[H^{2}(X,\mathbb{C})\cong H^{2,0}(X)\oplus H^{1,1}(X)\oplus H^{0,2}(X).\]
The N\'{e}ron--Severi group $\NS(X)$ of line bundles modulo algebraic equivalences naturally embeds into $H^{2}(X,\mathbb{Z})$ and can be identified with $H^{2}(X,\mathbb{Z})\cap H^{1,1}(X)$. 
This induces a structure of a lattice on $\NS(X)$. Its orthogonal complement in $H^{2}(X,\mathbb{Z})$ is denoted by $T_{X}$ and is called a \textit{transcendental lattice} of $X$.

If $X$ is a K3 surface the lattice $H^{2}(X,\mathbb{Z})$ is isometric to the lattice $U^3\oplus E_{8}(-1)^2$ where $U$ is the standard hyperbolic plane lattice and lattice $E_{8}$ corresponds to the Dynkin diagram $E_{8}$. Moreover, $\dim H^{2,0}(X)=1$. Any involution $\iota$ on $X$ such that $\iota^{*}(\omega)=\omega$ for a non--zero $\omega \in H^{2,0}(X)$ is called a \textit{Nikulin involution}.

As follows from \cite[Sect. 5]{Nikulin_Autom_K3} (see also \cite[Lem. 5.2]{Morrison_K3}) every such involution has eight isolated fixed points and the rational quotient $\pi:X\dashrightarrow Y$ by a Nikulin involution gives a new K3 surface $Y$.

\begin{definition}[\protect{\cite[Def. 6.1]{Morrison_K3}}]
	A K3 surface $X$ admits a Shioda--Inose structure if there is a Nikulin involution on $X$ such that the quotient map $\pi:X\dashrightarrow Y$ is such that $Y$ is a Kummer surface and $\pi_{*}$ induces a Hodge isometry $T_{X}(2)\cong T_{Y}$.
\end{definition}
Every Kummer surface admits a degree 2 map from an abelian surface $A$. It follows from \cite[Thm. 6.3]{Morrison_K3} that if $X$ admits a Shioda--Inose structure (Figure \ref{fig:Shioda_Inose})
\begin{figure}[htb]
\begin{tikzpicture}

\node (v1) at (-2.5,1.5) {$A$};
\node (v3) at (1.5,1.5) {$X$};
\node (v2) at (-0.5,-0.5) {$Y$};
\draw[->]  (v1) edge[dashed] (v2);
\draw [->] (v3) edge[dashed] (v2);
\end{tikzpicture}
\caption{Shioda--Inose structure}\label{fig:Shioda_Inose}
\end{figure}
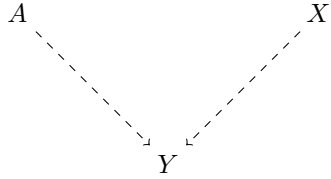
then $T_{A}\cong T_{X}$. This follows from the fact that the diagram induces isometries $T_{A}(2)\cong T_{Y}$ and $T_{X}(2)\cong T_{Y}$. Alternatively this is equivalent to existence of embedding $E_{8}(-1)^{2}\hookrightarrow \NS(X)$. 

Suppose now that we have a pair of elliptic curves defined by $E:y^2=x^3+ax+b$ , $E': y'^2=x'^3+c x'+ d$. Taking a quotient of abelian surface $E\times E'$ by automorphism $-1$ we obtain a Kummer surface which has a natural elliptic fibration with parameter $u = \frac{y}{y'}$
\[x^3+ax+b-u^2(x'^3+cx'+d).\]
This can be converted into a Weierstrass model, cf. \cite[\S 2.1]{Kumar_Kuwata}
\begin{equation}\label{eq:Kummer_fibr}
Y^2=X^3-3ac X+\frac{1}{64}(\Delta_{E_{1}}u^2+864bd+\frac{\Delta_{E_{2}}}{u^2}).
\end{equation}
This induces a double curve to a surface with Weierstrass form
\begin{equation}\label{eq:Inose_fibr}
Y^2=X^3-3ac X+\frac{1}{64}(\Delta_{E_{1}}u+864bd+\frac{\Delta_{E_{2}}}{u}).
\end{equation}
This is called the \textit{Inose fibration} and we denote it by $\Ino(E,E')$. It comes with a two--cover from the Kummer surface $\Kum(E,E')$ attached to $E\times E'$. In \cite{Shioda_Inose} it is proved that $\Ino(E,E')$ admits a degree 2 cover onto $\Kum(E,E')$ which implies the existence of the Shioda--Inose diagram with abelian variety $E\times E'$, cf. Figure \ref{fig:Shioda_Inose}. 

Fibration $\Ino(E,E')$ is special in the following sense that it admits two fibres of type $II^{*}$. We show below that we can find such a fibration on \eqref{eq:Canonical_surface}.

\subsection{Shioda--Inose structure}
We choose a new elliptic parameter for the equation \eqref{eq:family_rank_19}. Set $X= u (s + 1)^3 s$ and $Y= Y' s \frac{(1 +s)^3}{8}$. We get the following equation in $s,Y'$ coordinates
\begin{equation}
\begin{split}
s^4 \left(\frac{u}{t}+64 u^3+16 u^2\right)+s^3 \left(192 u^3-\frac{3 u}{t}\right)+s^2 \left(\frac{3 u}{t}
+192 u^3-32 u^2\right)\\
+s \left(64 u^3-\frac{u}{t}\right)+16 u^2 = Y'^2.
\end{split}
\end{equation}
This determines an elliptic curve with Weierstrass equation
\begin{equation}\label{eq:Inose_fibration}
Y''^2=X''^3-\frac{16}{3} t^3 (16 t+9)X''+512 t^5 u+\frac{8 t^4}{u}+\frac{8}{27} \left(1024 t^2-2592 t\right) t^4
\end{equation}
under the transformation
\begin{align*}
X''&=\frac{t \left(s \left(192 (s+1) t u^2-32 s t u+3 s-3\right)+96 t u-24 Y'\right)}{12 s^2 u},\\
Y''&=\frac{t (4 t u (64 (s^2-1) t u-192 s (s+1)^2 t u^2+3 s (s-1)^2)+Y' (s (64 t u^2-1)+64 t u))}{8 s^3 u^2}.
\end{align*}

Curve \eqref{eq:Inose_fibration} has two fibres of type $II^{*}$ at $0$ and $\infty$ and four fibres of type $I_{1}$ for $t\notin\{1,81/256,-9/16\}$. For $t=1$ we have configuration $II^{*}$ $(u=0,u=\infty)$, $I_{2}$ ($u=-1/8)$ and two $I_{1}$ fibres. For $t=81/256$ we have $II^{*}$ $(u=0,u=\infty)$, $I_{2}$ ($u=2/9$) and two $I_{1}$ fibres. For $t=-9/16$ we get $II^{*}$ ($u=0,u=\infty$) and two fibres of type $II$ ($u=\frac{1}{12} \left(11\pm 5 \sqrt{5}\right))$).

We want to determine parameters $a,b,c,d$ of \eqref{eq:Inose_fibr} as algebraic functions of $t$. From the comparison of \eqref{eq:Inose_fibr} with \eqref{eq:Inose_fibration} we obtain
\begin{align}
	9 a c-256 t^4-144 t^3=0 \label{al:eq_component_nontrivial}\\
	-729 b d+16384 t^6-41472 t^5=0 \nonumber\\
	-4 c^3-27 d^2-32 t^4=0\nonumber\\
	-4 a^3-27 b^2-2048 t^5=0\nonumber
\end{align}
It defines an affine scheme in five variables with three irreducible components $C_{1},C_{2}$ and $C_{3}$ over $\mathbb{Q}$.
%\todo{What happens for special parameters}
Components $C_{1}, C_{2}$ correspond to pairs $(c,d)=(0,0)$, $(a,b)=(0,0)$ so are not interesting for us. Component $C_3$ defines a singular curve of genus 0, so we may parametrize it. We compute the elimination ideal with respect to variables $a$ and $t$ which produce a relation
\[27 a^3 \left(27 a^3+1024 \left(512 t^2-414 t+27\right) t^5\right)+262144 (16 t+9)^3 t^{10}.\]
We parametrize this curve in the following way
\begin{equation*}
a=\frac{2^{263} \left(3 f^3-2^{81}\right)}{3 \left(f^7 \left(f^3-2^{79}\right)^2\right)},\quad t=-\frac{2^{156}}{f^3 \left(f^3-2^{79}\right)}.
\end{equation*}
Change of variables $f=2^{26} g$ gives a nicer parametrization
\begin{equation*}
a=\frac{8 \left(3 g^3-8\right)}{3 g^7 \left(g^3-2\right)^2},\quad t=-\frac{1}{g^3 \left(g^3-2\right)}.
\end{equation*}
We can determine the other variables
\begin{equation*}
c=-\frac{2 \left(3 g^3+2\right)}{3 g^5 \left(g^3-2\right)^2},\quad 
d^2=\frac{64 \left(2-9 g^3\right)^2}{729 g^{15} \left(g^3-2\right)^6},\quad
b=\frac{512 \left(81 \left(g^3-2\right) g^3+32\right)}{729 d g^{18} \left(g^3-2\right)^6}
\end{equation*}

The equation in $d$ and $g$ provides another genus $0$ parametrization. We have
\begin{equation*}
d=\frac{8 \left(9 h^6-2\right)}{27 h^{15} \left(h^6-2\right)^3},\quad
g=h^2.
\end{equation*}
Finally we obtain the following formulas for $a,b,c,d$ and $t$ in terms of the new parameter $h$. 
\begin{equation*}
a=\frac{8 \left(3 h^6-8\right)}{3 h^{14} \left(h^6-2\right)^2},\ 
b=\frac{64 \left(9 h^6-16\right)}{27 h^{21} \left(h^6-2\right)^3},\ 
c=-\frac{2 \left(3 h^6+2\right)}{3 h^{10} \left(h^6-2\right)^2},\ 
\end{equation*}
\begin{equation*}
d=\frac{8 \left(9 h^6-2\right)}{27 h^{15} \left(h^6-2\right)^3},\ 
t=-\frac{1}{h^6 \left(h^6-2\right)}
\end{equation*}

This is a parametrization of component $C_{3}$ \eqref{al:eq_component_nontrivial} which provides equations of curves $E_{1}$, $E_{2}$. However, we can optimize the equations of $E_{1}$ and $E_{2}$ over $\overline{\mathbb{Q}}$. We scale the equation for $E_{1}$ by $g^4/2$ and twist by $(g^6 - 2)/g$. Similarly we scale $E_{2}$ by $g^3$ and twist it by $(g^6 - 2)/g$. Finally we scale the equation so that the two--torsion point defined over $\mathbb{Q}(\sqrt{(t(t-1))})$ is moved to $(0,0)$. We conclude that a new equation for $E_{1}$ is
\begin{equation}\label{eq:E1_curve}
E_{1}: y^2=x^3-2x^2+\frac{1}{2}(1-S)x
\end{equation}
\begin{equation}\label{eq:E2_curve}
E_{2}: y^2=x^3+4x^2+2(1+S)x
\end{equation}
where $S=\sqrt{\frac{t-1}{t}}$. Kummer surface attached to this new pair $(E_{1}, E_{2})$ is isomorphic to the surface defined by \eqref{eq:Inose_fibration} with $u$ replaced by $u^2(1+S)$. This implies that we have a degree $2$ map from $\Kum(E_{1},E_{2})$ to $\Ino(E_{1},E_{2})$ and by \cite{Shioda_Inose} there exists also a degree $2$ map in the opposite direction that completes the Shioda--Inose diagram. This map in \cite{Shioda_Inose} is not given explicitly and it might be defined over some large algebraic extension of $\mathbb{Q}(t)$. We will show in Section \ref{sub:Corresp} that this is not a problem for us, since the correspondences defined by graphs of Galois conjugates of this map induces an isomorphism of suitable Galois modules induced by cohomology groups.

Curves $E_{1}$ and $E_{2}$ are $2$-isogenous, where the kernel is generated by point $(0,0)$ and the map is defined over $\mathbb{Q}(S)$. If $S$ is not rational then the field $\mathbb{Q}(S)$ is quadratic with unique non--identity automorphism $\sigma$. Curve $(E_{1}^{\sigma})^{(-2)}$ which is a twist by $(-2)$ of Galois conjugate $E_{1}^{\sigma}$ is equal to $E_{2}$. 

\begin{remark}
	By a result of Kani \cite{Kani_Moduli_spaces} there is no genus $2$ curve $C$ such that its Jacobian $J(C)$ would be isomorphic to a product of curves $E_{1}$, $E_{2}$.
\end{remark}

\subsection{Alternative Shioda--Inose structure}
It is worth pointing out that if we want only to extract $j$--invariants of the curves $E_{1}$ and $E_{2}$ defined above, we can use an alternative fibration and invoke a result of Shioda \cite{Shioda_K3_surface_and_sphere_packings} which gives a different form of Shioda--Inose fibration.

We choose a new elliptic parameter for the equation \eqref{eq:family_rank_19}. Set $X= u (s + 1)^3 s$ and $Y= Y' s \frac{(1 +s)^3}{8\sqrt{t}}$. We get the following equation in $s,Y'$ coordinates
\begin{equation}
\begin{split}
s^4 \left(64 t u^3+16 t u^2+u\right)+s^3 \left(192 t u^3-3 u\right)\\+s^2 \left(192 t u^3-32 t u^2+3 u\right)+s \left(64 t u^3-u\right)+16 t u^2=Y'^2.
\end{split}
\end{equation}
This determines an elliptic curve with Weierstrass equation
\begin{equation}\label{eq:Shioda_Inose_form}
\frac{512}{27} t u^5 (32 t u (32 t+54 u-81)+27)-\frac{256}{3} t (16 t+9) u^4 X''+\left(X''\right)^3=Y''^2
\end{equation}
under the transformation
\begin{align*}
	X''&=\frac{u \left(s \left(192 (s+1) t u^2-32 s t u+3 s-3\right)+96 t u-24 \sqrt{t} y\right)}{3 s^2}\\
	Y''&=\frac{u \left(4 \sqrt{t} u \left(-64 \left(s^2-1\right) t u+192 s (s+1)^2 t u^2-3 s (s-1)^2\right)+y \left(-64 s t u^2+s-64 t u\right)\right)}{s^3}
\end{align*}
Curve \eqref{eq:Shioda_Inose_form} has exactly the same fibre types as  \eqref{eq:Inose_fibration}.

We normalize equation \eqref{eq:Shioda_Inose_form} to obtain its Shioda--Inose form
\begin{equation}\label{eq:Shioda_Inose_params}
y^2=x^3-3A u^4 x+u^5 (u^2-2Bu+1).
\end{equation}

In equation \eqref{eq:Shioda_Inose_form} we change the parameter $u\mapsto u/(8 \sqrt{t})$ and perform a change of coordinates $X''=X''' r^2, Y''=Y''' r^3$ with $r=-\frac{1}{2 \sqrt[4]{t}}$. We obtain equation \eqref{eq:Shioda_Inose_params} with $x=X'''$, $y=Y'''$ and parameters
\begin{align*}
	A&=\frac{1}{9} (16 t+9),\\
	B&=\frac{2}{27} \sqrt{t} (81-32 t).
\end{align*}
It follows that elliptic surface attached to \eqref{eq:Shioda_Inose_form} corresponds to a Kummer surface with two elliptic curves given by their $j$--invariants $j_{1}$, $j_{2}$ which are solutions to the system
\begin{align*}
	A^3 &= j_{1}j_{2}/12^6,\\
	B^2 &= (1 - j_{1}/12^3) (1 - j_{2}/12^3).
\end{align*}
It follows that
\begin{equation}\label{eq:j_inv_formula1}
\{j_{1},j_{2}\}=\{64 \left(512 t^2-414 t\pm 2 \sqrt{(t-1) t} (256 t-81)+27\right)\}
\end{equation}

The formulas presented above are similar to \cite{Schutt_Elkies}.

\section{Picard ranks}\label{seq:Picard_ranks}
For a pair of elliptic curves $E_{1}$, $E_{2}$ we can determine the Picard rank $
\rho(E_{1},E_{2})=\rho(\textrm{Kum}(E_{1},E_{2}))$ of the Kummer surface attached to $E_{1}$, $E_{2}$ $\textrm{Kum}(E_{1},E_{2})$. This is a classical result.
\begin{theorem}\label{thm:Rank_Kummer}
	Let $E, E'$ be two elliptic curves defined over characteristic zero field. Then
\begin{equation}\label{eq:rank_formula}
\rho(E,E')=18+\rk\Hom(E,E').
\end{equation}
\end{theorem}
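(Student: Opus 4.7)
The plan is to combine two classical inputs: first, that $\rho(\Kum(A)) = 16 + \rho(A)$ for any complex abelian surface $A$; and second, that $\rho(E\times E') = 2 + \rk\Hom(E,E')$ for a product of elliptic curves. Adding these gives $\rho(E,E') = 18 + \rk\Hom(E,E')$ as desired.

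For the first input I would argue via the transcendental lattice. Blowing up $A$ at its sixteen $2$-torsion points and quotienting by the $[-1]$-involution produces the K3 surface $X = \Kum(A)$ together with sixteen disjoint $(-2)$-curves. By the Kummer lattice construction recalled in \cite{Morrison_K3} one obtains a Hodge isometry $T_X \cong T_A(2)$; in particular $\rk T_X = \rk T_A$. Since $b_2(X) = 22$ and $b_2(A) = 6$, the Lefschetz $(1,1)$-theorem yields
\[\rho(X) = 22 - \rk T_X = 22 - (6 - \rho(A)) = 16 + \rho(A).\]

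For the second input I would use the Künneth decomposition
\[H^2(E\times E',\mathbb{Z}) \cong H^2(E) \oplus \bigl(H^1(E) \otimes H^1(E')\bigr) \oplus H^2(E').\]
The two outer summands contribute the classes of the fibres $\{\mathrm{pt}\}\times E'$ and $E\times\{\mathrm{pt}\}$, which account for a rank $2$ subspace of $\NS$. The middle summand, intersected with $H^{1,1}$, is identified by Hodge theory with $\Hom(H^{1,0}(E), H^{1,0}(E'))$ in a way compatible with integral structures, and a class there lies in $\NS$ precisely when it is represented by the graph of a morphism $E \to E'$. This gives $\rho(E\times E') = 2 + \rk\Hom(E, E')$.

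The main obstacle is the integral lattice identification $T_X \cong T_A(2)$: one has to verify that the sublattice generated by the sixteen exceptional curves together with the pullback of $\NS(A)$ saturates to the full Néron--Severi lattice, so that the orthogonal complement really matches $T_A(2)$ rather than some finite-index overlattice. This verification is carried out in \cite{Morrison_K3}; at the level of ranks alone (which is all that is needed here) the equality $\rk T_X = \rk T_A$ also drops out of the Shioda--Inose diagram already recalled in Section~\ref{sec:Shioda_Inose}.
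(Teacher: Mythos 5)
Your proposal is correct and takes essentially the same route as the paper: the paper likewise computes $\rho(E\times E')=2+\rk\Hom(E,E')$ via the graph classes $\Gamma_{\lambda}$ (your K\"unneth argument is just the standard proof of that isomorphism), and then deduces the result from the rank equality $\rk T_{\Kum(E\times E')}=\rk T_{E\times E'}$, citing \cite[Prop.~4.3]{Morrison_K3} --- exactly the Hodge isometry $T_{X}\cong T_{A}(2)$ you invoke --- together with the counts $\rk T+\rk\NS=22$ and $6$.
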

\begin{proof}
	We can assume that $E$ and $E'$ are defined over $\mathbb{C}$.
	Let $A=E\times E'$ be an abelian variety and $\pi:A\dashrightarrow K=Kum(E,E')$ a rational two--cover induced by the map $A\rightarrow A/\langle \pm 1\rangle$.  We have an isomorphism
	\[\mathbb{Z}\oplus\Hom(E_{1},E_{2})\oplus\mathbb{Z}\xrightarrow{\cong} \NS(A)\]
	which sends $(a,\lambda, b)$ to $(a-1)h+\Gamma_{\lambda}+(b-\deg\lambda)h'$ where $\Gamma_{\lambda}\subset E\times E' $ is a graph of $\lambda:E\rightarrow E'$, $h=E\times\{0\}$ and $h'=\{0\}\times E'$. Let $T_{A}$ denote the transcendental lattice in $H^{2}(A,\mathbb{Z})$. We have $\rk T_{A}+\rk\NS(A) = 6$ and $\rk T_{K}+\rk \NS(K)=22$. From \cite[Prop. 4.3]{Morrison_K3} it follows that $\rk T_{A}=\rk T_{K}$ and the theorem follows.
\end{proof}

\begin{lemma}\label{lem:End_homs}
	Let $E$, $E'$ be two elliptic curves that are isogenous. Then $\Hom(E,E')$ is a rank $1$ projective module over $\End(E,E)$.
\end{lemma}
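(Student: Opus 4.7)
The plan is to view $M = \Hom(E, E')$ as a right module over $R := \End(E)$ via precomposition and to realise it as an invertible fractional $R$-ideal, using the existence of a dual isogeny as the crucial input.

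First I would establish the basic homological properties. Hom groups between abelian varieties are torsion-free, so $M$ is torsion-free over $R$. Fixing any isogeny $\psi\colon E \to E'$ of degree $n$ with dual $\hat{\psi}\colon E' \to E$, composition with $\hat{\psi}$ yields an $R$-linear map $M \to R$, $\phi \mapsto \hat{\psi}\phi$, and the identity $\psi\hat{\psi} = [n]_{E'}$ gives $n\phi = \psi(\hat{\psi}\phi) \in \psi R$. Thus $\psi R \subseteq M \subseteq \tfrac{1}{n}\psi R$, which simultaneously shows that $M$ is finitely generated over $R$ and that $M \otimes_{\mathbb{Z}} \mathbb{Q}$ is one-dimensional over $K := \End(E) \otimes_{\mathbb{Z}} \mathbb{Q}$.

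For the non-CM case, $R = \mathbb{Z}$ and any finitely generated torsion-free $\mathbb{Z}$-module of rank one is free of rank one, so the result is immediate. In the CM case, $R$ is an order in an imaginary quadratic field $K$ and I would pass to the analytic uniformisation: writing $E = \mathbb{C}/\Lambda$ and $E' = \mathbb{C}/\Lambda'$, the module $M$ identifies with the fractional $R$-ideal $(\Lambda' : \Lambda) = \{c \in K : c\Lambda \subseteq \Lambda'\}$. The hypothesis $R = \End(E)$ is exactly the statement that $\Lambda$ is a proper $R$-ideal, hence invertible; together with the analogous statement for $\Lambda'$ one obtains $(\Lambda' : \Lambda) = \Lambda'\Lambda^{-1}$, invertible as a product of invertible ideals, and therefore projective of rank one.

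The subtlety one has to watch for is the CM subcase in which $\End(E)$ and $\End(E')$ are distinct orders in $K$: then $\Lambda'$ is not a priori an $R$-module and the product formula needs adjustment, either by replacing $\Lambda'$ by the $R$-submodule of $K$ it generates or by working inside a common overorder. For the geometric application in Section~\ref{seq:Picard_ranks}, the curves $E_{1}$ and $E_{2}$ of \eqref{eq:E1_curve}--\eqref{eq:E2_curve} are $2$-isogenous in a manner that forces a common endomorphism ring, so the above argument applies directly.
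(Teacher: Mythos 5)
Your route is genuinely different from the paper's. The paper argues $\ell$-adically: it fixes an isogeny $\lambda$, picks a prime $\ell\nmid\deg\lambda$, and uses the fact that $\lambda$ induces an isomorphism $T_{\ell}E\to T_{\ell}E'$ to identify $\Hom(E,E')\otimes\mathbb{Z}_{\ell}$ with a free rank-one module over $\End(E)\otimes\mathbb{Z}_{\ell}$. You instead use the dual-isogeny sandwich $\psi R\subseteq M\subseteq\tfrac{1}{n}\psi R$ to get finite generation and rank one, and then pass to fractional ideals of quadratic orders via the analytic uniformisation. In the non-CM case and in the CM case with $\End(E)=\End(E')$ your argument is correct and complete: for quadratic orders properness does imply invertibility, so $(\Lambda':\Lambda)=\Lambda'\Lambda^{-1}$ is invertible, hence projective of rank one. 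This is arguably more self-contained than the paper's proof, which checks local freeness only at the single prime $\ell$ and leaves the primes dividing $\deg\lambda$ untouched.

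The gap is in your final paragraph. A $2$-isogeny does \emph{not} force a common endomorphism ring: $\mathbb{C}/\mathbb{Z}[i]$ and $\mathbb{C}/\mathbb{Z}[2i]$ are $2$-isogenous with endomorphism rings $\mathbb{Z}[i]$ and $\mathbb{Z}[2i]$. Moreover, in that case no ``adjustment'' of the product formula can succeed, because the conclusion itself fails: taking $E=\mathbb{C}/\mathbb{Z}[2i]$ and $E'=\mathbb{C}/\mathbb{Z}[i]$, one computes $\Hom(E,E')=(\mathbb{Z}[i]:\mathbb{Z}[2i])=\mathbb{Z}[i]$, whose multiplier ring is $\mathbb{Z}[i]\supsetneq\mathbb{Z}[2i]=\End(E)$; a non-proper fractional ideal of a quadratic order is not invertible, hence not projective. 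So the case you flagged as a ``subtlety'' is exactly the case in which the statement is false, and it cannot be waved away by citing the $2$-isogeny. (Your sandwich cannot repair it either: in this example every isogeny $E\to E'$ has even degree, so the conductor prime $2$ is never controlled --- the same blind spot that is present in the paper's own proof.) What holds unconditionally, and what the applications in Section \ref{seq:Picard_ranks} actually use, is the rank statement $\rk\Hom(E,E')=\rk\End(E)$, which already follows from your first paragraph; the projectivity assertion needs the extra hypothesis $\End(E)=\End(E')$, which the paper itself imposes later (cf.\ the proof of Lemma \ref{lem:CM_Picard_lattice}) but neither you nor the paper verifies for the pair \eqref{eq:E1_curve}, \eqref{eq:E2_curve}.
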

\begin{proof}
	Let $\lambda: E\rightarrow E'$ be an isogeny. Pick a prime $\ell$ such that $\ell\nmid \deg\lambda$. The map $\Phi: \Hom(E,E')\otimes\mathbb{Z}_{\ell}\rightarrow\Hom(T_{\ell}E,T_{\ell}E')$ is an isomorphism and since $\deg\lambda$ is invertible in $\mathbb{Z}_{\ell}$, $\Phi(\lambda)$ is an isomorphism. That implies that $\Hom(E, E')$ is a rank $1$ projective module over $\End(E,E)$. 
\end{proof}
Curves $E_{1}$ and $E_{2}$ described by \eqref{eq:E1_curve} and \eqref{eq:E2_curve} respectively are $\mathbb{Q}(S)$--isogenous. so we have $\rho(E_{1},E_{2})\geq 19$. If they have complex multiplication then $\rho(E_{1},E_{2})=20$, otherwise $\rho(E_{1},E_{2})=19$. In general there are countably many parameters $t$ such that $\rho(E_{1},E_{2})=20$, cf. \cite{Maulik_Poonen_Picard_jumps}. We classify here only those parameters that lie in $\mathbb{Q}$. 
\begin{corollary}
Let $t\in\mathbb{Q}^{\times}$ be a parameter and let $\rho(V_{t})=20$ for the K3 surface determined by \eqref{eq:Canonical_surface}. Let $S_{1}$, $S_{2}$ be two finite sets such that
\[S_{1}=\{2^{-5} \cdot 3^4, 1,-2^{-4} \cdot 3^2,-2^{-8} \cdot 3^4 \cdot 7^2,2^{-8} \cdot 3^4\},\]
\[S_{2}=\{3^2, 3^4 \cdot 11^2, -2^4 \cdot 3, -2^2, 3^4, -2^6 \cdot 3^4 \cdot 5, 7^4, -2^2\cdot 3^4, 3^8\cdot 11^4, -2^2 \cdot 3^4 \cdot 7^4\}.\]
Then $t$ belongs to $S_{1}\cup S_{2}$ and when $t\in S_{1}$ then the $j$--invariant of curves \eqref{eq:E1_curve},\eqref{eq:E2_curve} is rational, otherwise it is quadratic over $\mathbb{Q}$.
\end{corollary}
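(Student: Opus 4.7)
The approach is to translate the Picard rank $20$ condition into a CM condition on $E_1,E_2$, and then to enumerate the finitely many rational parameters $t$ producing CM $j$--invariants. By Theorem \ref{thm:Rank_Kummer} and Lemma \ref{lem:End_homs}, $\rho(V_t)=\rho(E_1,E_2)=20$ is equivalent to $\rk\End(E_1)=2$, i.e.\ to $E_1$ having complex multiplication; since $E_1$ and $E_2$ are $\mathbb{Q}(S)$--isogenous, $E_2$ is then automatically CM as well. Thus the task is to identify all $t\in\mathbb{Q}^\times$ for which the $j$--invariants $j_1,j_2$ given by \eqref{eq:j_inv_formula1} are CM $j$--invariants.

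From \eqref{eq:j_inv_formula1} the elementary symmetric functions are
\[j_1+j_2=128(512t^2-414t+27), \qquad (j_1-j_2)^2=2^{16}\,t(t-1)(256t-81)^2.\]
Hence $j_1,j_2\in\mathbb{Q}$ iff $S=\sqrt{(t-1)/t}\in\mathbb{Q}$, and otherwise $\{j_1,j_2\}$ is a Galois-conjugate pair in the quadratic extension $\mathbb{Q}(S)/\mathbb{Q}$. This dichotomy aligns exactly with the decomposition of the claimed parameter set into $S_1$ (rational $j$--invariants) and $S_2$ (quadratic $j$--invariants).

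In the first case both $j_1$ and $j_2$ must appear in the classical list of thirteen CM $j$--invariants lying in $\mathbb{Q}$. For each pair $(j_1,j_2)$ from this list I solve the quadratic $128(512t^2-414t+27)=j_1+j_2$ for $t\in\mathbb{Q}$ (at most two roots), then check that $(j_1-j_2)^2=2^{16}t(t-1)(256t-81)^2$ and that $t(t-1)$ is a rational square. The surviving $t$ assemble into $S_1$. In the second case the minimal polynomial of $\{j_1,j_2\}$ over $\mathbb{Q}$ must be the Hilbert class polynomial of an imaginary quadratic order of class number two; by Baker--Heegner--Stark together with Weinberger's classification for non-maximal orders there are only finitely many such orders, each yielding specific rational values of $j_1+j_2$ and $j_1j_2$. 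For each of these one repeats the same procedure: solve the quadratic in $t$ and verify consistency with the formula for $(j_1-j_2)^2$. This produces $S_2$.

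The main obstacle is ensuring completeness of the enumeration in the quadratic case, where one must simultaneously track class number two fundamental discriminants and non-maximal orders of class number two. The additional input that $E_1$ and $E_2$ are $2$--isogenous (with kernel $(0,0)$) restricts the admissible pairs of CM orders, since their conductors can differ by at most a factor of $2$, which bounds the candidate list. Once this finite list is fixed, verification reduces to solving polynomial equations over $\mathbb{Q}$, a routine computation that yields precisely the elements displayed in $S_1$ and $S_2$.
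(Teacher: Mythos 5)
Your overall reduction is the same as the paper's: by Theorem \ref{thm:Rank_Kummer} and Lemma \ref{lem:End_homs}, $\rho(V_t)=20$ forces $E_1,E_2$ to have complex multiplication, and the enumeration then rests on the finiteness of rational and quadratic CM $j$--invariants together with formula \eqref{eq:j_inv_formula1}; your class-number-two input (Baker--Heegner--Stark plus Weinberger) is exactly what the paper imports from Daniels--Lozano-Robledo, and your symmetric-function computation replaces the paper's SAGE search. However, your case dichotomy contains a genuine error. From \eqref{eq:j_inv_formula1} one has $(j_1-j_2)^2=2^{16}\,t(t-1)(256t-81)^2$, so $j_1,j_2\in\mathbb{Q}$ if and only if $t(t-1)$ is a rational square \emph{or} $256t-81=0$. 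The claimed equivalence ``$j_1,j_2\in\mathbb{Q}$ iff $S\in\mathbb{Q}$'' therefore fails at the degenerate parameter $t=81/256=2^{-8}\cdot 3^4$: there the square-root term is annihilated by the factor $256t-81$, so $j_1=j_2=-3^3\cdot 5^3$ is rational even though $S=\frac{5}{9}\sqrt{-7}\notin\mathbb{Q}$.

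This is not a harmless slip, because $2^{-8}\cdot 3^4$ is one of the five elements of $S_1$. Your rational-case procedure imposes the extra check ``$t(t-1)$ is a rational square'', which discards $t=81/256$ (there $t(t-1)<0$), and your quadratic-case procedure cannot recover it either, since at that parameter the minimal polynomial of $\{j_1,j_2\}$ has degree $1$, not $2$, so it never matches a Hilbert class polynomial of a class-number-two order. Hence the enumeration as written produces $S_1\setminus\{2^{-8}\cdot 3^4\}$ and does not prove the stated list. The repair is straightforward: drop the spurious square condition and retain every rational root $t$ of $128(512t^2-414t+27)=j_1+j_2$ that also satisfies $(j_1-j_2)^2=2^{16}t(t-1)(256t-81)^2$ (this correctly admits $t=81/256$ for the pair $j_1=j_2=-3375$), or equivalently treat the locus $256t-81=0$ as a separate degenerate case. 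With that correction your argument becomes a valid and somewhat more explicit variant of the paper's computer-aided enumeration.
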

\begin{proof}
The rank $\rho(V_{t})$ of the N\'{e}ron--Severi group is a birational invariant, hence it remains the same for the Shioda--Inose fibration \eqref{eq:Inose_fibration}. But this elliptic surface admits a Shioda--Inose diagram (Figure \ref{fig:Shioda_Inose}) with $A=E_{1}\times E_{2}$.
So by Theorem \ref{thm:Rank_Kummer} and Lemma \ref{lem:End_homs} it follows that we only look for the CM curves $E_{1}$, $E_{2}$ with parameter $t\in\mathbb{Q}^{\times}$. By formula \eqref{eq:j_inv_formula1} it follows that the $j$--invariant is rational or quadratic over $\mathbb{Q}$. It is well known, cf. (\cite[Appendix A, \S 3]{Silverman_Advanced}) that there are only 13 rational CM $j$--invariants, namely
\begin{equation*}
\begin{split}
0, 2^4\cdot 3^3\cdot 5^3,-2^{15}\cdot 3\cdot 5^3,2^6\cdot 3^3, \\
2^3\cdot 3^3\cdot 11^3, -3^3\cdot 5^3, 3^3\cdot 5^3\cdot 17^3, 2^6\cdot 5^3,\\
-2^{15}, -2^{15}\cdot 3^3, -2^{18}\cdot 3^3\cdot 5^3, -2^{15}\cdot 3^3\cdot 5^3\cdot 11^3, -2^{18}\cdot 3^3\cdot 5^3\cdot 23^3\cdot 29^3.
\end{split}
\end{equation*}
By a result of Daniels--Lozano-Robledo there are exactly $58$ quadratic $j$--invariants, cf \cite[Table 1]{Daniels_Robledo}. Using SAGE and commands \verb|cm_orders|, \verb|hilbert_class_polynomial| we can compute all rational parameters $t$ that produce CM--curves. They are recorded in Tables \ref{tab:rat_cm}, \ref{tab:quad_cm}	
\end{proof}

\begin{table}
	\[\begin{array}{lccc}
		t & \textrm{CM } j\textrm{--invariant} & \textrm{CM order of curve } E_{2} & \chi=\left( D\over \cdot\right)\\
        2^{-5} \cdot 3^4 & 2^6\cdot 3^3 & \mathbb{Z}[\sqrt{-1}]  & D=-1\\
		1 & 2^6\cdot 5^3 & \mathbb{Z}[\sqrt{-2}] & D=1\\
	    -2^{-4} \cdot 3^2 & 0 & \mathbb{Z}[\frac{1}{2}(1+\sqrt{-3})] & D=-3\\
				
		-2^{-8} \cdot 3^4 \cdot 7^2 & -3^3\cdot 5^3 & \mathbb{Z}[\sqrt{-7}] &  D=-7\\
		 2^{-8} \cdot 3^4 & -3^3\cdot 5^3 & \mathbb{Z}[\sqrt{-7}] & D=1
	\end{array}\]
	\caption{Rational $j$--invariants}\label{tab:rat_cm}
\end{table}

\begin{table}
	\[\begin{array}{lcccc}
	t & \mathbb{Q}(j)=\mathbb{Q}(S) & \Delta_{R_{K}} & \textrm{CM order }R_{K}\textrm{ of curve } E_{2} & \chi=\left( D\over \cdot\right)\\
	3^2 & \mathbb{Q}(\sqrt{2}) & -24 & \mathbb{Z}[\sqrt{-6}] & D=-3\\
	3^4 \cdot 11^2 & \mathbb{Q}(\sqrt{2}) & -88 & \mathbb{Z}[\sqrt{-22}] & D=-11\\

	\hline
	-2^4 \cdot 3 & \mathbb{Q}(\sqrt{3}) & -36 & \mathbb{Z}[3\sqrt{-1}] & D=-3\\
	\hline

	-2^2 & \mathbb{Q}(\sqrt{5}) & -20 & \mathbb{Z}[\sqrt{-5}] & D=-1\\
	3^4 & \mathbb{Q}(\sqrt{5}) & -40 & \mathbb{Z}[\sqrt{-10}] & D=-2\\
	-2^6 \cdot 3^4 \cdot 5 & \mathbb{Q}(\sqrt{5}) & -100 & \mathbb{Z}[5\sqrt{-1}] & D=-5\\
	\hline
	7^4 & \mathbb{Q}(\sqrt{6}) & -72 & \mathbb{Z}[3\sqrt{-2}] & D=-3\\

	\hline
	-2^2\cdot 3^4 & \mathbb{Q}(\sqrt{13}) & -52 & \mathbb{Z}[\sqrt{-13}] & D=-1\\
	\hline
	3^8\cdot 11^4 & \mathbb{Q}(\sqrt{29}) & -232 & \mathbb{Z}[\sqrt{-2\cdot 29}] & D=-2\\
	\hline
	-2^2 \cdot 3^4 \cdot 7^4 & \mathbb{Q}(\sqrt{37}) & -148 & \mathbb{Z}[\sqrt{-37}] & D=-1
	
	\end{array}	\]
	\caption{Quadratic $j$--invariants}\label{tab:quad_cm}
\end{table}

\section{N\'{e}ron-Severi lattice}
In this section we determine the N\'{e}ron--Severi lattice for each member of the family 
\eqref{eq:Canonical_surface} for $t\neq 0$ in field of characteristic $0$. Elliptic fibration given by \eqref{eq:family_rank_19} is a convenient fibration to perform the calculations since it has the lowest possible Mordell--Weil rank.

\begin{lemma}\label{lem:MW_structure_for_Inose_mod}
	Let $t\in\mathbb{C}\setminus\{0\}$. Generic fibre $E_{t}$ of family \eqref{eq:family_rank_19} satisfies the equality
	\[E_{t}(\mathbb{C}(s))=\{\mathcal{O},(0,0)\}\]
	when the \eqref{eq:E1_curve} does not have complex multiplication or $t=1$, otherwise
	\[E_{t}(\mathbb{C}(s))=\{\mathcal{O},(0,0)\}\oplus\langle P_{t}\rangle\]
	where the group generated by certain point $P_{t}$ is isomorphic to $\mathbb{Z}$.
\end{lemma}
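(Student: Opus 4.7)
The plan is to combine the Shioda--Tate formula for the fibration \eqref{eq:family_rank_19} with the Picard rank computation coming from the Shioda--Inose structure established in Section \ref{sec:Shioda_Inose}. Writing the generic fibre as $Y^{2}=X(X^{2}+aX+b)$ with $a=\tfrac{1}{4}(s^{2}-1)^{2}$ and $b=\tfrac{s^{2}(s^{2}-1)^{3}}{64\,t}$, the point $T=(0,0)$ is a visible $\mathbb{C}(s)$--rational $2$--torsion point, and the first task is to show that the torsion subgroup equals $\langle T\rangle$.

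To pin down the torsion, observe that the discriminant of the quadratic factor is
\[
a^{2}-4b=\frac{(s^{2}-1)^{3}\bigl((t-1)s^{2}-t\bigr)}{16\,t},
\]
in which the factor $(s^{2}-1)^{3}$ has odd multiplicity at $s=\pm 1$; this is not a square in $\mathbb{C}(s)$ for any $t\neq 0$, so $E_{t}[2](\mathbb{C}(s))=\{\mathcal{O},T\}$. A hypothetical $P$ with $2P=T$ would force $b$ itself to be a square in $\mathbb{C}(s)$, and the same $(s^{2}-1)^{3}$ obstructs this, so there is no $4$--torsion. Every Néron component group of the fibration ($\mathbb{Z}/2$ at each $III^{*}$, $\mathbb{Z}/4$ at $I_{4}$, and the additional $\mathbb{Z}/2$ at the $I_{2}$ appearing when $t=1$) is $2$--primary, so the image of the Mordell--Weil torsion in $\prod_{v}\Phi_{v}$ is $2$--primary; combined with the standard fact that a torsion section meeting the identity component at every reducible fibre must be trivial for an elliptic K3 surface, this rules out odd--order torsion and forces the torsion subgroup to be exactly $\mathbb{Z}/2$.

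For the free part I would apply the Shioda--Tate formula. The reducible--fibre contribution $\sum_{v}(m_{v}-1)$ equals $7+3+7=17$ for $t\notin\{0,1\}$ and $7+3+7+1=18$ for $t=1$, giving
\[
\rk E_{t}(\mathbb{C}(s))=\rho(V_{t})-19 \text{ for } t\neq 1,\qquad \rk E_{1}(\mathbb{C}(s))=\rho(V_{1})-20.
\]
The Shioda--Inose structure from Section \ref{sec:Shioda_Inose} identifies $\rho(V_{t})$ with $\rho(\Kum(E_{1},E_{2}))$, and by Theorem \ref{thm:Rank_Kummer} together with Lemma \ref{lem:End_homs} this equals $18+\rk\End(E_{1})$, which is $19$ when $E_{1}$ has no complex multiplication and $20$ when it does. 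Matching the three scenarios produces exactly the ranks $0,0,1$ claimed; the key observation is that $t=1$ is itself a CM specialisation but the extra reducible $I_{2}$ fibre expands the trivial lattice by one unit, absorbing the Picard rank jump from $19$ to $20$.

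The hardest part is the odd--torsion step, since the map from Mordell--Weil torsion to $\prod_{v}\Phi_{v}$ is not automatically injective for every elliptic fibration. A safer route is to invoke Shioda's classification of possible torsion lattices for elliptic K3 surfaces, which for the configuration $III^{*}+I_{4}+III^{*}+2I_{1}$ leaves only $\mathbb{Z}/2$ as a possibility. In the CM case an explicit generator $P_{t}$ can be recovered from the graph of a non--scalar endomorphism on $E_{1}\times E_{2}$ pulled back through the two--cover $\Kum(E_{1},E_{2})\dashrightarrow V_{t}$, but since the lemma only records existence and cyclicity of the free part, I would defer this explicit construction to a later section.
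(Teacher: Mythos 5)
Your proposal is correct, and its skeleton (Shioda--Tate applied to \eqref{eq:family_rank_19}, with the Picard rank of $V_{t}$ controlled through the Shioda--Inose structure and $\rk\Hom(E_{1},E_{2})$) is the same as the paper's, but the two inputs are supplied differently. For the rank, the paper invokes Shioda's correspondence theorem (\cite[Thm. 1]{Shioda_Correspondence}, \cite[Prop. 3.1]{Kumar_Kuwata}) to get that the Mordell--Weil rank of the Inose fibration \eqref{eq:Inose_fibration} equals $\rk\Hom(E_{1},E_{2})$, and then applies Shioda--Tate twice (once to \eqref{eq:Inose_fibration}, once to \eqref{eq:family_rank_19}); you instead equate $\rho(V_{t})=\rho(\Kum(E_{1},E_{2}))$ via the transcendental-lattice isometry $T_{V_t}(2)\cong T_{\Kum}$ and then quote Theorem \ref{thm:Rank_Kummer} and Lemma \ref{lem:End_homs} --- which is precisely the route the paper itself takes in the corollary classifying CM parameters, so all ingredients are available, and your bookkeeping of the trivial lattice ($17$ for $t\neq 1$, $18$ for $t=1$, with $t=1$ being a CM value whose rank jump is absorbed by the extra $I_{2}$ fibre) matches the paper's conclusion. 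For the torsion, your treatment is actually more complete than the paper's: the paper asserts from the fibre configuration that only orders $1$ and $2$ occur and reads off the unique rational $2$-torsion point from the Weierstrass equation, while you verify non-squareness of $a^{2}-4b$ and of $b$ (ruling out full $2$-torsion and any point halving $(0,0)$) and kill odd torsion using that all component groups are $2$-primary. One remark: your worry about injectivity of $E_{t}(\mathbb{C}(s))_{\mathrm{tors}}\to\prod_{v}\Phi_{v}$ is unfounded, since the kernel of this homomorphism is the narrow Mordell--Weil group, which is torsion-free because the height pairing is positive definite on it (here $\chi(\mathcal{O}_{\mathcal{E}_t})=2>0$); so your primary argument is already rigorous and the vaguer fallback via a ``classification of torsion for elliptic K3 surfaces'' is unnecessary.
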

\begin{proof}
Let $t\neq 1$. From the description of singular fibres it follows that the torsion elements in $G$ can have only orders $1$ or $2$. From the Weierstrass equation of \eqref{eq:family_rank_19} we see that the only two--torsion point is $(0,0)$.
From a result of Shioda \cite[Thm. 1]{Shioda_Correspondence} (see also \cite[Prop. 3.1]{Kumar_Kuwata}) it follows that the Mordell--Weil rank of generic fibre of \eqref{eq:Inose_fibration} is equal to $\rk\Hom(E_{1},E_{2})$ where by abuse of notation $E_{1}$ denotes the curve \eqref{eq:E1_curve} and $E_{2}$ denotes the curve \eqref{eq:E2_curve}. 
For parameters $t$ such that curves $E_{1}$, $E_{2}$ do not have complex multiplication we have $\rk\Hom(E_{1},E_{2})=1$ since the curves are connected by an isogeny, otherwise $\rk\Hom(E_{1},E_{2})=2$. Shioda--Tate formula \cite[Cor. 5.3]{Shioda_Mordell_Weil} applied to \eqref{eq:Inose_fibration} implies that in the non--CM case we have Picard rank $19$ for the elliptic fibraiton given by \eqref{eq:Inose_fibration} and Picard rank 20 in the CM case.
Since \eqref{eq:Inose_fibration} and \eqref{eq:family_rank_19} determine different fibrations on the same elliptic K3 surface, the Picard ranks are the same. Now application of Shioda--Tate formula to \eqref{eq:family_rank_19} implies that $E_{t}(\mathbb{C}(s))$ has rank $0$ in the non--CM case and $1$ otherwise.

For $t=1$ it follows from Shioda-Tate formula that the Picard rank of elliptic surface attached to $E_{t}$ is $20$ and the rank of $G$ is $0$. So the only non-zero point is $(0,0)$. 
\end{proof}

\subsection{Explicit Mordell--Weil lattice}\label{sec:explicit_point}
Using the algorithm in \cite[\S 3.1]{Kumar_Kuwata} we produce an explicit section on the surface defined by \eqref{eq:Inose_fibration}. We observe that $\Hom(E_{1},E_{2})$ contains a unique $2$--isogeny $\phi$ such that $\ker\phi=\langle (0,0)\rangle$. Elaborate computations on the Kummer surface reveal that point $Q_{t}$ such that
\begin{align*}
x(Q_{t})&=\frac{128 t u (u (32 t (3 (u-2) u+1)-3)-3)+3}{768 u^2}\\
y(Q_{t})&=\frac{\left(1-64 t u^2\right) (64 t u (2 u (32 t (u-2) (u-1)-1)-3)+1)}{4096 u^3}
\end{align*}
lies on the curve \eqref{eq:Inose_fibration} and is of infinite order. When $E_{1}$ is not CM, then it is even a generator of the free part of the geometric Mordell--Weil group. Otherwise it is complemented by another section which together with point $Q_{t}$ forms a group of rank $2$ (only in the case when $t\neq 1$).

\subsection{Explicit N\'{e}ron--Severi lattice}
The N\'{e}ron--Severi group of an elliptic fibration comes equipped with a non--degenerate pairing which derives from the intersection product, cf. \cite[\S 2]{Shioda_Mordell_Weil}. For elliptic fibrations with a section the N\'{e}ron--Severi group is free abelian and forms a lattice equipped with the intersection pairing. 
In characteristic zero the N\'{e}ron--Severi lattice  $\textrm{NS}(X)$ of surface $X$ embeds into $H^{2}(X,\mathbb{Z})$ in such a way that the intersection pairing becomes the cup product. 
%The orthogonal complement in $H^{2}(X,\mathbb{Z})$ of $\textrm{NS}(X)$ is denoted by $\textrm{T}(X)$ and called the transcendental lattice of $X$. 
It follows that if $X$ is a K3 surface the lattice $H^{2}(X,\mathbb{Z})$ with cup product pairing is isometric to lattice $E_{8}(-1)\oplus E_{8}(-1)\oplus U\oplus U\oplus U$ where $E_{8}(-1)$ denotes the standard $E_{8}$--lattice with opposite pairing, corresponding to the Dynkin diagram $E_{8}$. The lattice $U$ is the hyperbolic lattice which is generated by vectors $x,y$ such that $x^2=y^2=0$ and $x.y=1$. 

It will be clear from construction below that translation by two--torsion point $(0,0)$ on the elliptic surface \eqref{eq:family_rank_19} induces a Nikulin involution, cf. \cite{Geemen_Sarti_Nikulin}
\begin{theorem}\label{thm:Neron_Severi_non_CM}
	Let $t\in\mathbb{C}\setminus\{0\}$ be such that the curve \eqref{eq:E1_curve} does not have complex multiplication. Then the K3 surface determined by the affine equation $V_{t}$ has N\'{e}ron--Severi lattice isomorphic to $E_{8}(-1)^2\oplus U\oplus \langle -4\rangle$ and transcendental lattice isomorphic to $U\oplus\langle 4 \rangle$.
\end{theorem}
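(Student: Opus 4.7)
The plan is to reduce the computation to the abelian surface $A=E_{1}\times E_{2}$ via the Shioda--Inose structure, perform the lattice calculation there using the explicit $2$--isogeny between $E_{1}$ and $E_{2}$, and then transfer the answer back to $V_{t}$ using Nikulin's uniqueness theorem. Two preparatory inputs feed into this. First, Lemma \ref{lem:MW_structure_for_Inose_mod} gives Mordell--Weil rank $0$ for the generic fibre of \eqref{eq:family_rank_19}, so the Shioda--Tate formula applied to the singular fibres $III^{*}$, $I_{4}$, $III^{*}$, $I_{1}$, $I_{1}$ yields $\rho(V_{t})=19$. Second, the Shioda--Inose diagram of Section \ref{sec:Shioda_Inose} furnishes a primitive embedding $E_{8}(-1)^{2}\hookrightarrow\NS(V_{t})$ and a Hodge isometry $T_{V_{t}}\cong T_{A}$.

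Next I would determine $\NS(A)$ and its transcendental complement. Since $E_{1}$ and $E_{2}$ are non--CM and $\mathbb{Q}(S)$--isogenous, Lemma \ref{lem:End_homs} combined with the existence of the $2$--isogeny $\phi:E_{1}\to E_{2}$ with $\ker\phi=\langle(0,0)\rangle$ gives $\Hom(E_{1},E_{2})=\mathbb{Z}\cdot\phi$, and by the parametrisation from Theorem \ref{thm:Rank_Kummer} we obtain $\NS(A)=\mathbb{Z}h\oplus\mathbb{Z}\Gamma_{\phi}\oplus\mathbb{Z}h'$ with $h=E_{1}\times\{O\}$ and $h'=\{O\}\times E_{2}$. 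The intersection numbers $h^{2}=(h')^{2}=\Gamma_{\phi}^{2}=0$, $h\cdot h'=1$, $h\cdot\Gamma_{\phi}=\deg\phi=2$, $h'\cdot\Gamma_{\phi}=1$, after replacing $\Gamma_{\phi}$ by $\Gamma_{\phi}-h-2h'$, give the block Gram matrix of $U\oplus\langle -4\rangle$. As $H^{2}(A,\mathbb{Z})$ is unimodular of signature $(3,3)$, the lattice $T_{A}$ is even of signature $(2,1)$ and rank $3$, with discriminant form opposite to that of $\NS(A)$, namely $\tfrac{1}{4}\pmod{2\mathbb{Z}}$. The lattice $U\oplus\langle 4\rangle$ realises this genus, and since its rank $3$ satisfies $\mathrm{rank}\geq l(A_{T_{A}})+2$, Nikulin's uniqueness theorem for indefinite even lattices \cite{Nikulin_Autom_K3} forces $T_{A}\cong U\oplus\langle 4\rangle$, whence $T_{V_{t}}\cong U\oplus\langle 4\rangle$.

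Finally I would identify $\NS(V_{t})$ itself. Let $M$ be the orthogonal complement of the embedded $E_{8}(-1)^{2}$ inside $\NS(V_{t})$. Because $E_{8}(-1)^{2}$ is unimodular, it splits off as a direct summand both from $\NS(V_{t})$ and from $H^{2}(V_{t},\mathbb{Z})\cong U^{3}\oplus E_{8}(-1)^{2}$, so $\NS(V_{t})=E_{8}(-1)^{2}\oplus M$ with $M\subset U^{3}$ primitive and with orthogonal complement $T_{V_{t}}\cong U\oplus\langle 4\rangle$. Matching signature $(1,2)$ and the discriminant form $-\tfrac{1}{4}\pmod{2\mathbb{Z}}$, a second application of Nikulin's uniqueness theorem gives $M\cong U\oplus\langle -4\rangle$, proving the claim. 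The main technical hurdle lies in these two Nikulin uniqueness steps at rank $3$, where the hypothesis $\mathrm{rank}\geq l(A)+2$ is attained only with equality; an alternative, more hands--on route would realise $\NS(V_{t})$ directly as the index--$2$ overlattice of the trivial lattice $U\oplus E_{7}(-1)^{2}\oplus A_{3}(-1)$ of \eqref{eq:family_rank_19} determined by the $2$--torsion section $(0,0)$, and identify this overlattice by an explicit discriminant and signature calculation.
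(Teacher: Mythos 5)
Your argument is correct, but it runs in the opposite direction to the paper's. The paper never leaves the surface $V_t$ itself: it exhibits $E_8(-1)^2\oplus U\oplus\langle-4\rangle$ explicitly inside $\NS(V_t)$ using the components of the two $III^{*}$ fibres, the $I_4$ fibre, the zero section and the two--torsion section of the fibration \eqref{eq:family_rank_19} (precisely the ``hands--on'' overlattice route you sketch in your closing sentence), then compares the discriminant $-4$ of this rank--$19$ sublattice with the discriminant of $\NS(V_t)$ computed from the discriminant formula of \cite[\S 11.10]{Shioda_Schutt} to conclude equality, and only afterwards reads off the transcendental lattice from the uniqueness of the primitive embedding $\NS(V_t)\hookrightarrow E_8(-1)^2\oplus U^3$ (\cite[Thm. 2.8, Cor. 2.10]{Morrison_K3}). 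You instead determine the transcendental side first, via $T_{V_t}\cong T_{E_1\times E_2}$ from the Shioda--Inose structure, the explicit Gram matrix of $\NS(E_1\times E_2)$ (your intersection numbers are right, and $\Hom(E_1,E_2)=\mathbb{Z}\phi$ does follow from Lemma \ref{lem:End_homs} together with the observation that $2$ is not a perfect square), and Nikulin uniqueness, and then recover $\NS(V_t)$ by splitting off the unimodular $E_8(-1)^2$ and applying uniqueness once more. Each route buys something: yours is lattice--theoretically cleaner and explains conceptually where the $\langle-4\rangle$ comes from (it equals $-2\deg\phi$), whereas the paper's produces an explicit basis of $\NS(V_t)$ by divisors defined over $\mathbb{Q}(t)$ --- and that is not a luxury, since this basis is reused verbatim in the CM case (Table \ref{tab:NS_for_CM}) and the rationality of the $19$ generators drives the Frobenius--eigenvalue arguments of Sections \ref{sec:hypergeo_ident} and \ref{sec:Motive_descr}, which your abstract isometries would not supply. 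Two minor caveats: both of your uniqueness steps sit exactly at the boundary $\rk = l(A)+2$ of Nikulin's criterion, but the hypothesis is met, so they are valid --- note, however, that the theorem you invoke is in Nikulin's 1979 paper on integral symmetric bilinear forms, not in the 1976 announcement \cite{Nikulin_Autom_K3} cited in this paper; and the assertion ``$M\subset U^3$ primitive'' needs the small remark that the orthogonal complement of $E_8(-1)^2$ in $H^2(V_t,\mathbb{Z})$ is isometric to $U^3$, although your second application really only needs the signature and discriminant form of $M$, which already follow from $\NS(V_t)=E_8(-1)^2\oplus M$ and the unimodularity of $H^2(V_t,\mathbb{Z})$.
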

\begin{proof}
The N\'{e}ron--Severi group of an elliptic surface is spanned by components of singular fibres, images of sections and one general fibre. It follows from Lemma \ref{lem:MW_structure_for_Inose_mod} that the N\'{e}ron--Severi lattice $N$ of elliptic surface	\eqref{eq:family_rank_19} is spanned by components of fibres and two sections corresponding to points $\mathcal{O}$ and $(0,0)$.

We denote by $F$ a general fibre. All fibres of the fibration \eqref{eq:family_rank_19} are algebraically equivalent and satisfy $F^2=0$. The fibres corresponding to reduction types $III^{*}$ have each eight $(-2)$--curves as their components, intersecting in a way governed by the extended Dynkin diagram $\tilde{E}_{7}$. We label for one of them components by letters $e_{0},\ldots,e_{7}$ where $e_{0}$ is the unique component intersecting the image of the zero section $\mathcal{O}$ and $e_{7}$ being the only other simple component of the corresponding fibre. 
For the other $III^{*}$ fibre we denote components by $f_{0},\ldots, f_{7}$ and where $f_{7}$ and $f_{0}$ are simple components in the fibre and $f_{7}$ intersects $\mathcal{O}$. For the fibre of type $I_{4}$ we denote the components by $\gamma_{0},\ldots, \gamma_{3}$ where $\gamma_{0}$ intersects $\mathcal{O}$. The section $(0,0)$ has simple intersection with components $\gamma_{3}$, $f_{0}$ and $e_{7}$. From \cite[Thm. 1.3]{Shioda_Mordell_Weil} the trivial lattice $\textrm{Triv}$ spanned by components of singular fibres not intersecting $\mathcal{O}$, $\mathcal{O}$ and $F$ is not primitive in $N$ and the quotient of both is of order $2$ where the non--trivial coset is spanned by the image of $(0,0)$. Hence
\[N=\langle\mathcal{O},F\rangle + \langle f_{0},\ldots, f_{6}\rangle + \langle e_{1},\ldots, e_{7}\rangle + \langle \gamma_{1},\gamma_{2},\gamma_{3}\rangle + \langle (0,0)\rangle. \]
Let $\alpha = 2e_{1}+4e_{2}+6e_{3}+3e_{4}+5e_{5}+4e_{6}+3e_{7}+2(0,0)$ be an element in $N$. Let $L_{1}=\langle \mathcal{O},f_{1},\ldots, f_{7}\rangle$, $L_{2}=\langle e_{1},\ldots, e_{7}, (0,0)\rangle$ be two sublattices in $N$. We observe that they are isometric to $E_{8}(-1)$. Finally let $U_{1}$ denote a sublattice in $N$ generated by $\gamma_{3}+\alpha$ and $\gamma_{2}$. It is isometric to $U$. Finally we need a lattice of rank $1$ spanned by $\gamma=\gamma_{1}-2(\gamma_{3}+\alpha)-\gamma_{2}$. It is easy to check that $\langle\gamma\rangle$ is isometric to $\langle -4\rangle$. Finally, we observe that lattices $L_{1}, L_{2}, U_{1}$ and $\langle\gamma\rangle$ are pairwise orthogonal and the lattice $L$ defined as
\begin{equation}\label{eq:Neron_Severi_non_CM_lattice}
L=L_{1}\oplus L_{2}\oplus U_{1}\oplus\langle\gamma\rangle
\end{equation}
is a sublattice of $N$ of discriminant $-4$ and rank $19$. Hence, it is of finite index in $N$. The discriminant of $N$ can be easily computed with \cite[\S 11.10]{Shioda_Schutt} since we know the singular fibres of fibration \eqref{eq:family_rank_19} and also the structure of the Mordell--Weil group due to Lemma \ref{lem:MW_structure_for_Inose_mod}. It follows that $N$ has also discriminant $-4$. Since $L$ is of finite index in $N$ it is actually of index $1$ and hence $L=N$ proving the first part of the theorem. It follows from \cite[Thm. 2.8]{Morrison_K3} and \cite[Cor. 2.10]{Morrison_K3} that there is a unique primitive embedding of $N$ into $E_{8}(-1)^2\oplus U^3$. Hence, the transcendental lattice must be isometric to $U\oplus\langle 4\rangle$.

\begin{figure}[htb]
	\pgfdeclarelayer{background}
	\pgfdeclarelayer{foreground}
	\pgfsetlayers{background,main,foreground}
	\begin{tikzpicture}
	\tikzset{%
		every node/.style={circle, inner sep=0pt,fill=green!20,draw,double,rounded corners,minimum size =1.5em},
		every edge/.style = {draw,thick,black}
	}
	\node (v9) at (0,0) {$f_{0}$};
	\node (v8) at (0,1) {$f_{1}$};
	\node (v6) at (0,2) {$f_{2}$};
	\node (v5) at (0,3) {$f_{3}$};
	\node (v4) at (0,4) {$f_{5}$};
	\node (v3) at (0,5) {$f_{6}$};
	\node (v2) at (0,6) {$f_{7}$};
	\node (v1) at (0,7) {$\mathcal{O}$};
	\node (v7) at (-1,3) {$f_{4}$};
	\node (v22) at (1.5,3.5) {$\gamma_{2}$};
	\node (v20) at (3.5,3.5) {$\gamma_{1}$};
	\node (v19) at (2.5,4.5) {$\gamma_{0}$};
	\node (v21) at (2.5,2.5) {$\gamma_{3}$};
	\node (v13) at (5,3) {$e_{5}$};
	\node (v14) at (5,4) {$e_{3}$};
	\node (v16) at (5,5) {$e_{2}$};
	\node (v17) at (5,6) {$e_{1}$};
	\node (v18) at (5,7) {$e_{0}$};
	\node (v12) at (5,2) {$e_{6}$};
	\node (v11) at (5,1) {$e_{7}$};
	\node (v10) at (5,0) {$T$};
	\node (v15) at (6,4) {$e_{4}$};
	%\begin{scope}[circle(.1)]
	\draw  (v1) edge (v2);
	\draw  (v2) edge (v3);
	\draw  (v3) edge (v4);
	\draw  (v4) edge (v5);
	\draw  (v5) edge (v6);
	\draw  (v6) edge (v8);
	\draw  (v8) edge (v9);
	\draw  (v9) edge (v10);
	\draw  (v11) edge (v10);
	\draw  (v12) edge (v11);
	\draw  (v12) edge (v13);
	\draw  (v13) edge (v14);
	\draw  (v15) edge (v14);
	\draw  (v14) edge (v16);
	\draw  (v17) edge (v16);
	\draw  (v17) edge (v18);
	\draw  (v18) edge (v1);
	\draw  (v1) edge (v19);
	\draw  (v19) edge (v20);
	\draw  (v20) edge (v21);
	\draw  (v21) edge (v10);
	\draw  (v21) edge (v22);
	\draw  (v22) edge (v19);
	
	\draw  (v5) edge (v7);
	
	\begin{pgfonlayer}{background}
	\draw[rounded corners=2em,line width=4em,blue!20,cap=round]
	(v1)  -- (v8);
	\draw[rounded corners=2em,line width=4em,blue!20,cap=round]
	(v5) -- (v7);
	\draw[rounded corners=2em,line width=4em,red!20,cap=round]
	(v10)  -- (v17);
	\draw[rounded corners=2em,line width=4em,red!20,cap=round]
	(v14) -- (v15);
	\end{pgfonlayer}
	\node[draw=none,rectangle, fill=none] at (-2,4) {$E_{8}(-1)$ lattice};
	\node[draw=none,rectangle, fill=none] at (7,2) {$E_{8}(-1)$ lattice};
	\end{tikzpicture}
	\caption{$(-2)$--curves in N\'{e}ron--Severi lattice of $V_{t}$ for generic $t$ with $2T=\mathcal{O}$ and $E_{8}(-1)$ lattices highlighted}\label{fig:NS_for_elliptic_surface}
\end{figure}
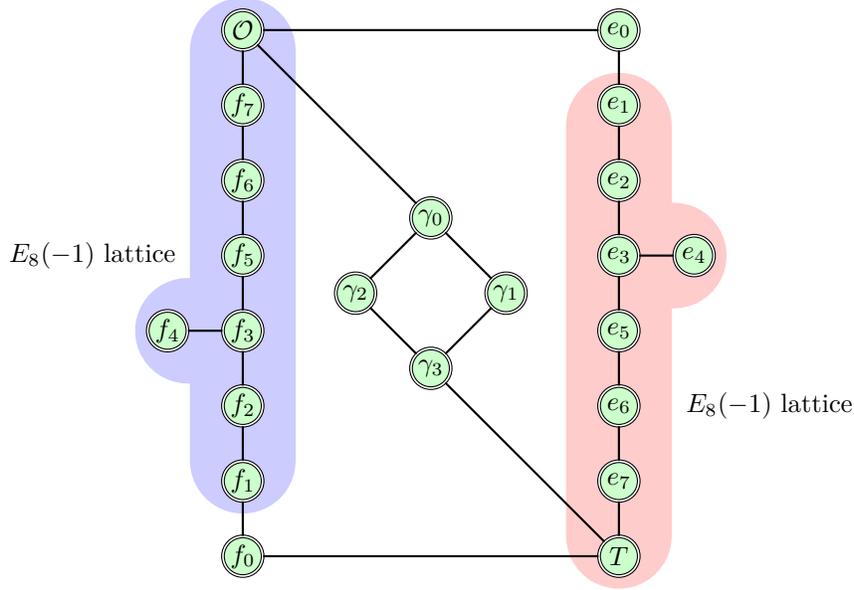

\end{proof}

\begin{remark}
	We note that theorem has been proved by other methods in \cite[Thm. 4.1]{Shiga_Mirror}, see also \cite{Dolgachev_Mirror_symmetry}.
\end{remark}

For $t\in\mathbb{C}\setminus\{0,1\}$ such that the curve \eqref{eq:E1_curve} has complex multiplication we have that the generic fibre $E_{t}$ of family \eqref{eq:family_rank_19} has Picard rank $20$ and hence by Lemma \ref{lem:MW_structure_for_Inose_mod} the geometric Mordell--Weil group $E_{t}(\mathbb{C}(s))$ has rank $1$. Point $P$ which generates the free part of the Mordell--Weil group corresponds to a section of elliptic fibration \eqref{eq:family_rank_19} and its image gives an element in the N\'{e}ron--Severi lattice of $V_{t}$. Replacing the generator $P$ by $\pm P$ or $\pm P+(0,0)$ we can always assume that the corresponding section do intersects curve $f_{7}$ from Figure \ref{fig:NS_for_elliptic_surface} and one of the curves $\gamma_{0},\gamma_{2}, \gamma_{3}$. The latter follows from the fact that addition of $(0,0)$ translates the intersection component by index $2$ and multiplication by $(-1)$ gives the opposite component. So we can assume without loss of generality that the generator $P$ that we choose satisfies all the above conditions. We say that such a generator is \textit{optimal}.
Lattice $L$ from the proof of Theorem \ref{thm:Neron_Severi_non_CM} together with the image of section corresponding to $P$ spans the N\'{e}ron--Severi lattice in the complex multiplication case.

\begin{table}[htb]
\begin{tabular}{c|c|c}
	Lattice pair & N\'{e}ron--Severi lattice $L_{i}$ & Transcendental lattice $L_{i}^{\bot}$	\\
	$(L_{0},L_{0}^{\bot})$ & $E_{8}(-1)^2\oplus U\oplus \left(\begin{array}{cc}
	-4 & 0 \\
	0 & -4 - 2 P.\mathcal{O}
	\end{array}\right)$ & $\left(\begin{array}{cc}
		4 & 0 \\
		0 & 4 + 2 P.\mathcal{O}
	\end{array}\right)$\\[10pt]
	$(L_{1},L_{1}^{\bot})$ & $E_{8}(-1)^2\oplus U\oplus \left(\begin{array}{cc}
	-4 & 1 \\
	1 & -2 - 2 P.\mathcal{O}
	\end{array}\right)$ & $\left(\begin{array}{cc}
	4 & 1 \\
	1 & 2 + 2 P.\mathcal{O}
	\end{array}\right)$\\[10pt]
	$(L_{2},L_{2}^{\bot})$ & $E_{8}(-1)^2\oplus U\oplus \left(\begin{array}{cc}
	-4 & 2 \\
	2 & -4 - 2 P.\mathcal{O}
	\end{array}\right)$ & $\left(\begin{array}{cc}
	4 & 2 \\
	2 & 4 + 2 P.\mathcal{O}
	\end{array}\right)$\\[10pt]
	$(L_{4},L_{4}^{\bot})$ & $E_{8}(-1)^2\oplus U\oplus \langle -2\rangle\oplus \langle -4\rangle$ & $\langle 2\rangle\oplus \langle 4\rangle$
\end{tabular}
\caption{N\'{e}ron--Severi lattices for Picard rank $20$ surfaces $V_{t}$}\label{tab:NS_for_CM}
\end{table}

\begin{theorem}
	Let $t\in\mathbb{C}\setminus\{0\}$ be such that the curve \eqref{eq:E1_curve}has complex multiplication. Then the K3 surface determined by the affine equation $V_{t}$ has N\'{e}ron--Severi lattice and transcendental lattice isomorphic to one of the pairs in Table \ref{tab:NS_for_CM}.
\end{theorem}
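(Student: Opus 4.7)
The strategy is to extend the sublattice construction from the proof of Theorem~\ref{thm:Neron_Severi_non_CM} by adjoining the extra Mordell--Weil generator that appears in the CM case. By Lemma~\ref{lem:MW_structure_for_Inose_mod}, when $E_{1}$ has complex multiplication the geometric Mordell--Weil group of the fibration \eqref{eq:family_rank_19} contains a free generator $P$, which we take to be optimal as described just before the statement: $P$ meets $f_{7}$ and one of $\gamma_{0},\gamma_{2},\gamma_{3}$. Let $L=L_{1}\oplus L_{2}\oplus U_{1}\oplus\langle\gamma\rangle$ be the rank~$19$ sublattice built in the proof of Theorem~\ref{thm:Neron_Severi_non_CM}. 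By Shioda--Tate together with the remark preceding this statement, the N\'eron--Severi lattice $M$ of $V_{t}$ equals $L+\mathbb{Z}[P]$, so it suffices to describe this rank~$20$ lattice.

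The first step is to modify $P$ by adding classes from $L_{1}\oplus L_{2}\oplus U_{1}$ to produce $\tilde{P}\in M\otimes\mathbb{Q}$ orthogonal to $L_{1}\oplus L_{2}\oplus U_{1}$. A short intersection calculation using $P^{2}=-2$, $P.F=1$ and $\mathcal{O}^{2}=-2$ shows that $P-\mathcal{O}-(P.\mathcal{O}+2)F$ is already orthogonal to $L_{1}$, because $P$ hits the same simple component $f_{7}$ as $\mathcal{O}$. A further $\mathbb{Q}$-linear adjustment by components from $L_{2}$ and by the generators of $U_{1}$ removes the remaining non-orthogonal contributions, and $\tilde{P}^{2}=-h(P)$, where $h(P)$ is the N\'eron--Tate height of $P$, computable by Shioda's formula from $P.\mathcal{O}$ and the local contributions at each singular fibre.

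Next I would enumerate the configurations of fibre components hit by $P$: the first $III^{*}$ is fixed to $f_{7}$ by optimality; the second $III^{*}$ allows $e_{0}$ or $e_{7}$; and the $I_{4}$ allows $\gamma_{0},\gamma_{2},\gamma_{3}$. Some of these six \emph{a priori} configurations are excluded by the integrality constraint on $P.T$ coming from $\langle P,T\rangle=0$ via Shioda's formula $0=2+P.\mathcal{O}-P.T-\sum_{v}\mathrm{contr}_{v}(P,T)$. The remaining configurations produce Gram matrices on $\langle\gamma,\tilde{P}\rangle$ which, after a $\mathrm{GL}_{2}(\mathbb{Z})$ change of basis when necessary, coincide with the four matrices displayed in Table~\ref{tab:NS_for_CM}. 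The off-diagonal entries $0,1,2$ record the integer $\gamma.\tilde{P}$ in the first three cases, while $L_{4}$ corresponds to the configuration in which the $2\times 2$ form splits integrally as $\langle -2\rangle\oplus\langle -4\rangle$.

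Finally, by \cite[Thm.~2.8 and Cor.~2.10]{Morrison_K3} the primitive embedding of $L_{1}\oplus L_{2}\oplus U_{1}\cong E_{8}(-1)^{2}\oplus U$ into the K3 lattice $E_{8}(-1)^{2}\oplus U^{3}$ is unique up to isometry, with orthogonal complement isomorphic to $U\oplus U$. The transcendental lattice of $V_{t}$ is then the orthogonal complement of $\langle\gamma,\tilde{P}\rangle$ inside this $U\oplus U$, which by a direct computation yields the entries of the right-hand column of Table~\ref{tab:NS_for_CM}. I expect the main technical obstacle to be the case enumeration in the third step: specifically, classifying which of the six \emph{a priori} configurations actually occur, and verifying that they reduce to the four displayed normal forms once excluded cases and $\mathrm{GL}_{2}(\mathbb{Z})$ changes of basis are taken into account.
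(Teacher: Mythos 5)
For $t\neq 1$ your strategy is essentially the paper's own proof: adjoin the optimal generator $P$ to the rank--$19$ lattice $L$ of Theorem \ref{thm:Neron_Severi_non_CM}, replace $P$ by a vector $\tilde P$ orthogonal to $E_8(-1)^2\oplus U$, use Shioda's height formula together with $\langle P,T\rangle=0$ (where $T=(0,0)$) to constrain which fibre components $P$ can meet, and read off the resulting rank--$2$ Gram matrices; the integrality constraint indeed cuts the six configurations down to three, giving $L_0$, $L_1$, $L_2$. The genuine gap is the case $t=1$, which produces $L_4$ and which your framework cannot reach. Note that $t=1$ is a CM value (Table \ref{tab:rat_cm}: $j=2^6\cdot 5^3$, CM by $\mathbb{Z}[\sqrt{-2}]$), so it falls under the hypothesis of the theorem; yet by Lemma \ref{lem:MW_structure_for_Inose_mod} the Mordell--Weil group of \eqref{eq:family_rank_19} at $t=1$ is exactly $\{\mathcal{O},(0,0)\}$, of rank $0$ --- there is no free generator $P$ to adjoin. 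At $t=1$ the twentieth algebraic class comes not from a section but from the degeneration of the two $I_1$ fibres into a reducible $I_2$ fibre at $s=\infty$. Your claim that ``$L_4$ corresponds to the configuration in which the form splits as $\langle -2\rangle\oplus\langle -4\rangle$'' is false: the three surviving section configurations have discriminants $16+8P.\mathcal{O}$, $7+8P.\mathcal{O}$ and $12+8P.\mathcal{O}$, none of which can equal $\det(\langle -2\rangle\oplus\langle -4\rangle)=8$ for an integer $P.\mathcal{O}\geq 0$. The paper handles $t=1$ by a separate argument on the Inose fibration \eqref{eq:Inose_fibration}, which at $t=1$ has two $II^*$ fibres, one $I_2$ fibre, and Mordell--Weil group generated by the explicit point $Q_1$ of height $4$ meeting the identity component everywhere; this yields $E_8(-1)^2\oplus A_1\oplus\langle \mathcal{O},F,Q_1\rangle$ with $\langle \mathcal{O},F,Q_1-\mathcal{O}+2F\rangle\cong U\oplus\langle -4\rangle$, i.e.\ $L_4$. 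Without some such separate analysis your proof simply omits a case that the theorem asserts.

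Two smaller points. First, you allow a ``$\mathbb{Q}$-linear adjustment'' to build $\tilde P\in M\otimes\mathbb{Q}$; but if $\tilde P$ is not integral, then $E_8(-1)^2\oplus U\oplus\langle\gamma,\tilde P\rangle$ is not the integral lattice $M$, and the Gram matrix you compute need not present $\NS(V_t)$. The paper avoids this by exhibiting $\tilde P$ as an explicit $\mathbb{Z}$-combination of $P$, the fibre components, $\mathcal{O}$ and $T$, and then confirming equality with $\NS$ by comparing discriminants against $-4\langle P,P\rangle$. Second, $\tilde P^2=-\langle P,P\rangle$ only holds if $\tilde P$ is orthogonal to all of $L$, including $\gamma$; as you set it up, $\tilde P$ is orthogonal only to $E_8(-1)^2\oplus U$, so its norm is the quantity the paper calls $-2\delta$, not minus the height. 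Both of these are repairable, but the missing $t=1$ analysis is not repairable within your construction and requires a different fibration or at least the extra reducible fibre of \eqref{eq:family_rank_19} at $t=1$.
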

\begin{remark}
	We will show later that case $(L_{1},L_{1}^{\bot})$ is not possible for parameters $t\in\mathbb{Q}^{\times}$, cf. Lemma \ref{lem:CM_Picard_lattice}.
\end{remark}
\begin{proof}
Assume that $t\neq 1$. The Mordell--Weil group $E_{t}(\mathbb{C}(s))$ comes equipped with height pairing $\langle\cdot, \cdot\rangle$ which is symmetric and non--negative and differs from the intersection product of section in the N\'{e}ron--Severi group according to Shioda's height formula, cf. \cite[Thm. 8.6]{Shioda_Mordell_Weil}. 
\[\langle P,Q\rangle = 2+ P.\mathcal{O}+Q.\mathcal{O}-P.Q-\sum_{v}c_{v}(P,Q)  \]
where $P,Q\in E_{t}(\mathbb{C}(s))$, the rational number $c_{v}(P,Q)$ depend only on the components which the sections $P$ and $Q$ intersect at the fibre above $v$. The numbers $c_{v}(P,Q)$ are zero for all non--singular fibres. Otherwise they are described explicitly in \cite[\S 8]{Shioda_Mordell_Weil}. We call the value $\langle P,P\rangle$ the height of $P$. In our situation it follows that when a generator $P$ of $E_{t}(\mathbb{C}(s))$ is optimal then
\[\langle P,P\rangle = 4+2p_{\mathcal{O}}-\frac{3}{2}p_{e_{7}}-\frac{3}{2}(1-p_{f_{7}})-\frac{3}{4}p_{g_{2}}-p_{g_{3}}\]
where $p_{h}=P.h$ is the intersection number of section $P$ with element $h$ from the N\'{e}ron--Severi group. The discriminant of the N\'{e}ron--Severi lattice $N$ equals $-4\langle P,P\rangle$. We will show now that the lattice spanned by $L$ from \eqref{eq:Neron_Severi_non_CM_lattice} and by the image of P is equal to $N$. We call this lattice $\tilde{L}$. For $L$ we have an ordered basis
\[\{b_{i}\}_{i=1,\ldots, 19}=\{f_{1},\ldots,f_{7},\mathcal{O},e_{1},\ldots, e_{7},T=(0,0), \gamma_{3}+\alpha,\gamma_{2},\gamma_{1}-2(\gamma_{3}+\alpha)-\gamma_{2}\}.\]
For lattice $\tilde{L}$ in terms of basis $\{b_{1},\ldots,b_{19},P \}$ we pick another vector $\tilde{P}$ such that
\begin{align*}
\tilde{P}&=(4, 8, 12, 6, 10, 8, 6, 3, 0, 0, 0, 0, 0, 0, 0, 0, 0, 0, 0, 1) \\
 &+p_{T}\cdot (0, 0, 0, 0, 0, 0, 0, 0, 2, 4, 6, 3, 5, 4, 3, 2, 0, 0, 0, 0)\\
 &+p_{e_{7}}\cdot (0, 0, 0, 0, 0, 0, 0, 0, 4, 8, 12, 6, 10, 8, 6, 3, 0, 0, 0, 0)\\
 &+p_{\mathcal{O}} (2, 4, 6, 3, 5, 4, 3, 2, 0, 0, 0, 0, 0, 0, 0, 0, 0, 0, 0, 0)\\
 &+(-p_{\gamma_{2}} - 2 (3 p_{e_{7}} + 2 p_{e_{8}}))\cdot  b_{17}\\
 &+(-(3 p_{e_{7}} + 2 p_{e_{8}} + p_{\gamma_{3}}))\cdot b_{18}\\
 &+(-p_{e_{7}} - p_{e_{8}} - p_{\gamma_{2}} - p_{\gamma_{3}})\cdot b_{19}.
\end{align*} 
Using the intersection paring on $\tilde{L}$ we can show that
\[\tilde{L}=E_{8}(-1)^2\oplus U_{1}\oplus \left(\begin{array}{cc}
-4 & -2 p_{e_{7}}+3 p_{\gamma_{2}}+2p_{\gamma_{3}}\\
-2 p_{e_{7}}+3 p_{\gamma_{2}}+2p_{\gamma_{3}} & -2\delta
\end{array}\right)\] 
where $\delta = -2 - p_{T} (3 + p_{T}) + p_{\gamma_{2}} + p_{\gamma_{3}} + p_{e_{7}} (2 + 3 p_{\mathcal{O}} + 2 p_{\gamma_{3}}) + p_{T} (p_{T} + p_{\gamma_{2}} + 2 p_{\gamma_{3}})$. This follows from a simple algebraic computation and the fact that $p_{\gamma_{i}}^2=p_{\gamma_{i}}$, 
$p_{\gamma_{i}}p_{\gamma_{j}}=\delta_{ij}$, $p_{e_{7}}^2=p_{e_{7}}$ since all those values $p_{h}$ belong to $\{0,1\}$ and the section $P$ hits a unique component at each fibre with multiplicity $1$. From our assumptions $p_{f_{7}}=1$ and $p_{\gamma_{1}}=0$.
From the properties of the pairing on $E_{t}(\mathbb{C}(s))$ it follows that $\langle P,T\rangle =0$ and since we work in characteristic zero we have that $T.\mathcal{O}=0$. Then we get the relation
\[ p_{T} = 2 + p_{\mathcal{O}} - \frac{3}{2} p_{e_{7}} - (\frac{1}{2} p_{\gamma_{2}} + p_{\gamma_{3}}).\]
This implies that $\delta=\frac{1}{4} (8 + 3 p_{\gamma_{2}} - p_{e_{7}} (1 + 6 p_{\gamma_{2}} + 4 p_{\gamma_{3}}) + 4 p_{\mathcal{O}})$. Since $\delta$ is an integer, we get a restriction on admissible tuples $(p_{e_{7}},p_{\gamma_{2}},p_{\gamma_{3}})$. It follows that $p_{e_{7}}=p_{\gamma_{2}}$ and for $p_{e_{7}}=0$ we can have $p_{g_{3}}\in \{0,1\}$ but for $p_{e_{7}}=1$ we must have $p_{\gamma_{3}}=0$.

We observe also that the discriminant of $\tilde{L}$ is equal to $-4\langle P,P\rangle$ hence $N=\tilde{L}$. Now we can easily identify which lattice structure occurs for $N$
\begin{table}[h!]
\begin{tabular}{cc}
	$(p_{e_{7}},p_{\gamma_{3}})$ & $N$ \\
	$(0,0)$ & $N\cong L_{0}$\\
    $(1,0)$ & $N\cong L_{1}$\\
	$(0,1)$ & $N\cong L_{2}$
\end{tabular}
\end{table}	
To compute the transcendental lattice we observe that \cite[Cor. 2.10]{Morrison_K3} implies that $N$ embeds uniquely into the K3 lattice $E_{8}(-1)^2\oplus U^3$. To compute the transcendental lattice $N^{\bot}$ we have to compute the orthogonal complement of the lattice spanned by $x,y$ with Gram matrix 
\[\left(\begin{array}{cc}
2 a & b \\
b & 2 c
\end{array}\right)\]
embedded in $U^2$. Let $\alpha_{0},\alpha_{1},\beta_{0},\beta_{1}$ be a basis of $U^2$ such that $\alpha_{i}^2=\beta_{i}^2=0$ and $\alpha_{i}\beta_{j}=0$, $\alpha_{0}\alpha_{1}=1=\beta_{0}\beta_{1}$. Then the embedding $\phi$ such that
$\phi(x)=a\alpha_{0}+\alpha_{1}+b\beta_{0}$, $\phi(y)=c\beta_{0}+\beta_{1}$ gives an isometric lattice and we compute that its orthogonal complement is spanned by $\alpha_{0}-a\alpha_{1}$ and $b\alpha_{1}+c\beta_{0}-\beta_{1}$ with Gram matrix 
\[\left(\begin{array}{cc}
-2 a & b \\
b & -2 c
\end{array}\right).\]
We get immediately each transcendental lattice $L_{i}^{\bot}$ in Table \ref{tab:NS_for_CM} which corresponds to a N\'{e}ron--Severi lattice of type $N\cong L_{i}$.

Now, for $t=1$ we can modify the argument above. We check that in terms of fibration \eqref{eq:Inose_fibration} we get two $II^{*}$ fibres and one $I_{2}$ fibre. 
Finally the group of sections is spanned by the point $Q_{1}$ defined in Section \ref{sec:explicit_point}. This point has height $4$ and we check by discriminant formula that is generates the Mordell--Weil group. It intersects all fibres at the same component as identity section $\mathcal{O}$. Shioda's height formula implies that $Q_{1}.\mathcal{O}=0$ so the N\'{e}ron--Severi lattice has structure
\[E_{8}(-1)\oplus E_{8}(-1)\oplus A_{1}\oplus \langle \mathcal{O},F,Q_{1}\rangle.\]
Lattice  $\langle \mathcal{O},F,Q_{1}\rangle$ with basis $\{\mathcal{O},F,Q_{1}-\mathcal{O}+2F\}$ has structure $U_{1}\oplus \langle -4\rangle$. This implies that the N\'{e}ron--Severi lattice equals $L_{4}$ from Table \ref{tab:NS_for_CM} and then the transcendental lattice is $L_{4}^{\bot}$ since we embed $\langle -2\rangle\oplus\langle -4\rangle$ into $U^2$.
\end{proof}

\section{Hypergeometric identities}\label{sec:hypergeo_ident}
In this section following \cite{Beukers_Cohen_Mellit} we relate finite hypergeometric sums to trace formula for $\ell$-adic etale cohomology of the desingularisation of \eqref{eq:Canonical_surface}. We adopt the notation for Gauss sums and hypergeometric sums used in \cite{Beukers_Cohen_Mellit}.
 
Let $\mathbb{F}_{q}$ be a finite field. We fix a non--trivial additive character $\psi_{q}:(\mathbb{F}_{q},+)\rightarrow \mathbb{C}^{\times}$ on $\mathbb{F}_{q}$. For any multiplicative character $\chi:\mathbb{F}_{q}^{\times}\rightarrow \mathbb{C}^{\times}$ we define a \textit{Gauss sum} attached to $\chi$ and $\psi_{q}$
\begin{equation}
g(\chi)=\sum_{x\in\mathbb{F}_{q}^{\times}}\chi(x)\psi_{q}(x).
\end{equation}
For any additive character $\tilde{\psi}$ we can find an element $a\in\mathbb{F}_{q}^{\times}$ such that $\tilde{\psi}(x)=\psi_{q}(ax)$ for all $x\in\mathbb{F}_{q}$. It follows that 
\begin{equation}\label{eq:Gauss_sum_invariance}
g(\chi,\psi)=\overline{\chi(a)}g(\chi,\psi_{q}). 
\end{equation}

We denote by $\omega$ a generator of the group of multiplicative characters on $\mathbb{F}_{q}$. We denote by $g(m)$ a Gauss sum $g(\omega^{m},\psi_{q})$. We will see that a slight ambiguity in the notation caused by the choice of an additive character in $g(m)$ will disappear in the finite hypergeometric formulas due to cancellations and \eqref{eq:Gauss_sum_invariance}. 

For affine variety \eqref{eq:Canonical_surface} over a finite field $\mathbb{F}_{q}$ we can count a number of points over $\mathbb{F}_{q}$ using hypergeometric finite sums as in \cite[Prop. 4.3]{Beukers_Cohen_Mellit}. It follows that
\begin{equation}\label{eq:Hypergeometric_BCM_ident}
|V_{t}(\mathbb{F}_{q})|=|V_{t}(\mathbb{F}_{q}^{\times})|=\frac{1}{q}(q-1)^3+\frac{1}{q(q-1)}\sum_{m=0}^{q-2}g(4m)g(-m)^4\omega(\frac{1}{256t})^m.
\end{equation}
The sum on the right--hand side does not depend on the choice of a fixed additive character $\psi_{q}$. 
Affine variety $V_{t}$ has a non--singular projective model as described in \cite[\S 5]{Beukers_Cohen_Mellit}. We do not use this model. Instead we analyse an elliptic fibration attached to \eqref{eq:family_rank_19}. This family of elliptic curves corresponds to fibration on $V_{t}$ defined by parameter $s=\frac{x+y}{x-y}$. Using the theory of elliptic surfaces we can construct a relatively minimal non--singular projective model $\mathcal{E}_{t}$ of \eqref{eq:family_rank_19} which has a fibration $\pi:\mathcal{E}_{t}\rightarrow\mathbb{P}^{1}$ and the generic fibre corresponds to an elliptic curve given by \eqref{eq:family_rank_19}. 

\begin{lemma}\label{lem:Point_count_identity}
	Let $q$ be a prime power not divisible by $2$ and let $t$ be a rational number with numerator and denominator coprime to $q$. Assume also that $t-1\not\equiv 0(mod\  q)$. Then
	\[|\mathcal{E}_{t}(\mathbb{F}_{q})| = 22q-2+|V_{t}(\mathbb{F}_{q})|.\]
\end{lemma}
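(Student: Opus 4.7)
My plan is to realise $\mathcal{E}_{t}$ as the minimal desingularisation of the projective closure of $V_{t}$ in $\mathbb{P}^{3}$ and to track the increase of $\mathbb{F}_{q}$-points coming from (i) compactification at infinity and (ii) resolution of the six rational double points that appear there.

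First, I would consider $\overline{V_{t}}\subset\mathbb{P}^{3}$, the projective closure of $V_{t}$ with coordinates $[X\!:\!Y\!:\!Z\!:\!W]$, cut out by the homogeneous quartic $F=256t\,XYZ(W-X-Y-Z)-W^{4}$. Its intersection with the hyperplane $\{W=0\}$ is the union of the four lines $\{X=0\}$, $\{Y=0\}$, $\{Z=0\}$, $\{X+Y+Z=0\}$ in $\mathbb{P}^{2}$. These four lines are in general position and meet in $\binom{4}{2}=6$ $\mathbb{F}_{q}$-rational points with no three concurrent, so inclusion--exclusion gives
\[
|\overline{V_{t}}\cap\{W=0\}|(\mathbb{F}_{q})=4(q+1)-6=4q-2.
\]

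Second, I would verify from the partial derivatives of $F$ that, for $t\not\equiv 0,1\pmod{q}$, the singular locus of $\overline{V_{t}}$ consists exactly of these six points. At each singular point I plan to exhibit an explicit analytic change of coordinates bringing $F$ to the normal form $uv-w^{4}=0$, identifying each singularity as an $A_{3}$ rational double point. For a ``corner'' point like $[1\!:\!0\!:\!0\!:\!0]$ one uses the factorisation $F=256t\cdot YZ(-1+W-Y-Z)-W^{4}$ and sets $u=Y(-1+W-Y-Z)$, $v=256t Z$, $w=W$; for an ``edge'' point like $[1\!:\!-1\!:\!0\!:\!0]$ the substitution $u=z$, $v=y+z-w$ together with the factorisation $256t\,z(y+z-w)$ of the quadratic part produces $F=256t\,uv(1-y)-w^{4}$, which rescales to the same normal form. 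The hypotheses that $q$ is odd and that $t,t-1$ are units modulo $q$ ensure these coordinate changes are valid in characteristic $p$.

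Third, I would invoke the standard fact that the minimal resolution of an $A_{3}$ singularity replaces the singular point by a chain of three exceptional $(-2)$-curves with two internal intersections, contributing $3(q+1)-2=3q+1$ $\mathbb{F}_{q}$-points in place of the single singular point. Resolving all six singularities simultaneously therefore yields
\[
|\widetilde{\overline{V_{t}}}(\mathbb{F}_{q})|=|\overline{V_{t}}(\mathbb{F}_{q})|-6+6(3q+1)=|\overline{V_{t}}(\mathbb{F}_{q})|+18q.
\]
Since desingularising rational double points preserves the triviality of the canonical class, $\widetilde{\overline{V_{t}}}$ is a smooth projective K3 surface birational to $V_{t}$; by uniqueness of smooth minimal models of K3 surfaces it is isomorphic to $\mathcal{E}_{t}$. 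Combining all contributions,
\[
|\mathcal{E}_{t}(\mathbb{F}_{q})|=|V_{t}(\mathbb{F}_{q})|+(4q-2)+18q=22q-2+|V_{t}(\mathbb{F}_{q})|,
\]
as required.

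The principal obstacle I anticipate is the singularity analysis of the second step: the quadratic part of $F$ at each singular point has rank only $2$, so the $A_{3}$ classification cannot be read off the tangent cone alone and requires the explicit analytic coordinate change exhibiting fourth-order contact of the two local branches. Checking that the same normal form applies at all six points (three ``corner'' and three ``edge'' type) demands care but is essentially routine once the pattern is in hand.
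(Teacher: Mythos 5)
Your proposal is correct, and it takes a genuinely different route from the paper's. The paper never compactifies $V_{t}$ in $\mathbb{P}^{3}$; it argues fibrewise in the elliptic parameter $s=(x+y)/(x-y)$: it compares each affine fibre $V_{s}$ with the Weierstrass fibre $E_{s}$ of \eqref{eq:family_rank_19}, tracking the points gained or lost under the birational coordinate change (the zero section, the two-torsion point $(0,0)$, and two further points depending on whether $-(s^{2}-1)/t$ is a square), parametrizes the singular fibres over $s\in\{0,\pm 1,\pm\sqrt{t/(t-1)}\}$ explicitly by genus-$0$ curves, identifies the fibres of $\pi:\mathcal{E}_{t}\to\mathbb{P}^{1}$ via Tate's algorithm (two $III^{*}$, one $I_{4}$, two $I_{1}$), and sums over $s\in\mathbb{P}^{1}(\mathbb{F}_{q})$, the constant $22q-2$ emerging only after cancellation of several quadratic-residue terms $\delta(\cdot,\cdot)$. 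In your approach the same constant appears transparently as the boundary count $4q-2$ of the four rational lines at infinity plus $18q$ from resolving six split $A_{3}$ points; this is more conceptual, avoids all case analysis on quadratic residues, and I have checked that your normal forms $uv-w^{4}$ (at both corner and edge points, valid because $2$, $t$, $t-1$ are units in $\mathbb{F}_{q}$) and your claim that the affine part is singular only when $t\equiv 1$ are correct, so the arithmetic $4q-2+18q=22q-2$ reproduces the paper's formula. The price of your route is the final identification of the resolved quartic with $\mathcal{E}_{t}$, a step the paper gets for free because it works with $\mathcal{E}_{t}$ from the outset.

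Two points in your plan deserve sharpening. First, the count $3(q+1)-2$ per singularity requires the $A_{3}$ points to be \emph{split} over $\mathbb{F}_{q}$: for a non-split form $u^{2}-\delta v^{2}=w^{4}$ with $\delta$ a non-square, Frobenius swaps the two end curves of the exceptional chain and the resolution contributes only $q+1$ rational points. Your explicit coordinate changes are defined over $\mathbb{F}_{q}$ and hence do establish splitness, but this is exactly why they must be exhibited over $\mathbb{F}_{q}$ rather than over $\overline{\mathbb{F}}_{q}$, and a final write-up should say so. Second, ``uniqueness of smooth minimal models'' presupposes that $\mathcal{E}_{t}$ over $\mathbb{F}_{q}$ is itself minimal (equivalently, K3); in the paper's logical order this is only established later, in Lemma \ref{lem:Trace_formula}. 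A self-contained fix: your resolved surface is a K3 carrying the elliptic fibration induced by $s$, which is automatically relatively minimal since a K3 surface contains no $(-1)$-curves; then invoke uniqueness of the relatively minimal regular model of the genus-one curve \eqref{eq:family_rank_19} over $\mathbb{F}_{q}(s)$, which identifies it with $\mathcal{E}_{t}$ and, being applied over the ground field, also settles the descent of the isomorphism to the non-algebraically closed field $\mathbb{F}_{q}$ so that rational point counts agree.
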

\begin{proof}
Fix a prime power $q$ and a rational number $t$ which satisfy the assumption of the lemma. Let $V=V_{t}$ and let $V_{s}$ denote a curve over $\mathbb{F}_{q}$ such that
\[V_{s}: xyz(1-(x+y+z))-\frac{1}{256 t}=0, \quad x+y=s(x-y)\]
for $s\in\mathbb{F}_{q}$. For $s=(1:0)\in\mathbb{P}^{1}(\mathbb{F}_{q})$ we define
\[V_{s}: xyz(1-(x+y+z))-\frac{1}{256 t}=0, \quad 0=x-y.\]
Let $\Sigma$ denote the set $\{-1,0,1,\pm\sqrt{\frac{t}{t-1}}\}\subset \mathbb{F}_{q}$. For $s\in\mathbb{P}^{1}(\mathbb{F}_{q})\setminus\Sigma$ the variety $V_{s}$ is an affine model of an elliptic curve. Its Weierstrass model $E_{s}$ is given by \eqref{eq:family_rank_19}. The birational change of coordinates \eqref{eq:change_coords_can} implies that for $s\in \mathbb{F}_{q}\setminus\Sigma$
\begin{equation}
|V_{s}(\mathbb{F}_{q})|=\left\{\begin{array}{ll}
|E_{s}(\mathbb{F}_{q})|-2, & -\frac{s^2-1}{t}\notin (\mathbb{F}_{q}^{\times})^{2}\\
|E_{s}(\mathbb{F}_{q})|-4, & -\frac{s^2-1}{t}\in (\mathbb{F}_{q}^{\times})^{2}
\end{array}\right.
\end{equation}
In the first line the difference comes from the zero point on $E_{s}(\mathbb{F}_{q})$ and two--torsion point $(0,0)$. Under the condition in the second line two additional points $\{R,-R\}$ on $E_{s}(\mathbb{F}_{q})$ exist that do not map to any point on $V_{s}(\mathbb{F}_{q})$. 

For $s=(1:0)\in\mathbb{P}^{1}(\mathbb{F}_{q})$ we have a similar criterion
\begin{equation}
|V_{s}(\mathbb{F}_{q})|=\left\{\begin{array}{ll}
|E_{s}(\mathbb{F}_{q})|-2, & -t\notin (\mathbb{F}_{q}^{\times})^{2}\\
|E_{s}(\mathbb{F}_{q})|-4, & -t\in (\mathbb{F}_{q}^{\times})^{2}
\end{array}\right.
\end{equation}
For $s=\pm 1$ it follows that $V_{s}(\mathbb{F}_{q})=\emptyset$. 

For $s=0$ we get a singular curve $V_{0}$ with projective closure $\overline{V_{0}}: -x^2 z(Z-z)=aZ$ where $a=\frac{1}{256 t}$. It has a parametrization $\phi:\mathbb{P}^{1}\rightarrow \overline{V_{0}}$
\[\phi(f:r)= \left((1-a) \left(f^2-a r^2\right)^2:-\frac{(f+r) (a r+f)^3}{a-1}:(f+r) (a r+f) \left(f^2-a r^2\right)\right).\]
Map $\phi$ is a morphism and its inverse $\phi^{-1}$ is only rational with base points at $B=\{ (a - 1 : -1/(a - 1) : 1), (0 : 1 : 0), (1 : 0 : 0)\}$ where the last two points are singular on $\overline{V_{0}}$. 
It follows that
\[\overline{V_{0}}(\mathbb{F}_{q})= B\cup \{\pi(f:1): f\in \mathbb{F}_{q}\setminus \{-1,-a,\pm \sqrt{a}\}\},\]
so we get
\[|V_{0}(\mathbb{F}_{q})| = \left\{\begin{array}{ll}
q-3 &, t\in (\mathbb{F}_{q}^{\times})^2\\
q-1 &, t\notin (\mathbb{F}_{q}^{\times})^2\end{array}\right.\]

For each $s$ such that $s^2=\frac{t}{t-1}$ we get a singular curves $V_{s}$. We have $t=\frac{s^2}{s^2-1}$ and since $s=\frac{x+y}{x-y}$ we obtain after simplifications
\[V_{s}: (1+s)^3+2^8 s^2 x^2 z (-1-s+2sx+z+sz)=0.\]
Its projective closure $\overline{V_{s}}$ is a singular curve of degree $4$ and genus $0$. There are two singular points $\{(0:1:0),(\frac{1+s}{4s}:\frac{1}{4}:1)\}$. A parametrization is given by morphism $\phi:\mathbb{P}^{1}\rightarrow \overline{V_{s}}$ with 
\[\phi(f:r)=\left(\frac{(s+1) \left(8 f^2-4 f r+r^2\right)^2}{128 s}:-\frac{r^4}{64}:\frac{1}{16} r (2 f-r) \left(8 f^2-4 f r+r^2\right)\right).\]
Base locus of the inverse $\phi^{-1}$ contains three points $\{P_{1},P_{2},P_{3}\}$ such that
\begin{align*}
	P_{1}&=(\frac{1+s}{4s},\frac{1}{4},1)=\phi(\frac{1}{4} \left(2\pm \sqrt{-2}\right):1)\\
	P_{2}&=(0:1:0)=\phi(\frac{1}{4}(1\pm \sqrt{-1}):1)\\
	P_{3}&=(-\frac{1+s}{2s}:1:0)=\phi(\frac{1}{2}:1)
\end{align*}
We also have $\phi(1:0)=(1:0:0)$, hence 
\[V_{s}(\mathbb{F}_{q})=\{P_{1}\}\cup\{\phi(f:1): f \in \mathbb{F}_{q}\setminus \{\frac{1}{2},\frac{1}{4}(1\pm \sqrt{-1}),\frac{1}{4} \left(2\pm \sqrt{-2}\right)\}  \}.\]
To simplify notation we introduce
\[\delta(m,n)=\left\{\begin{array}{ll}
n &, m \in (\mathbb{F}_{q}^{\times})^2\\
0 &, m \notin (\mathbb{F}_{q}^{\times})^2
\end{array}\right.  \]
The number of $\mathbb{F}_{q}$ points on $V_{s}$ is given by the formula
\begin{equation}
|V_{s}(\mathbb{F}_{q})| = q- \delta(2,-1) - \delta(2,-2).
\end{equation}

Since variety $V$ is fibred over $\mathbb{P}^{1}$ we have
\[|V(\mathbb{F}_{q})|=|V_{\infty}(\mathbb{F}_{q})| + S_{1} +S_{2}\]
where $S_{1}=\sum_{s\in\mathbb{F}_{q}\setminus\Sigma}|V_{s}(\mathbb{F}_{q})|$ and $S_{2}=\sum_{s\in\Sigma}|V_{s}(\mathbb{F}_{q})|$. The sum $S_{1}$ splits into
\[S_{1}=\sum_{s\in\mathbb{F}_{q}\setminus\Sigma}|E_{s}(\mathbb{F}_{q})| \underbrace{-2 |\{s: s \in \mathbb{F}_{q}\setminus\Sigma\}|}_{S_{1,0}} \underbrace{-2 |\{s: s \in \mathbb{F}_{q}\setminus\Sigma, -\frac{s^2-1}{t}\in(\mathbb{F}_{q}^{\times})^2\}|}_{S_{1,1}}.\]
It follows easily that $S_{1,0}=-2q+6+\delta(4,\frac{t}{t-1})$. For $S_{1,1}$ we observe that 
\[S_{1,1}=-\sum_{s\in\mathbb{F}_{q}\setminus\Sigma}\left(\left(\frac{-\frac{s^2-1}{t}}{q}\right) +1\right)\]
hence $S_{1,1}=-N(1=X^2+t Y^2)+2+\delta(2,t)+\delta(\delta(4,-1),\frac{t}{t-1})$ where $N(1=s^2+t y^2)$ denotes the number of $\mathbb{F}_{q}$--rational solutions to the equation $1=X^2+t Y^2$. It follows from \cite[Chap. 8, \S 3]{Ireland_Rosen} that $N(1=X^2+t Y^2)= q-\left(\frac{-t}{q}\right)$. 

It follows that
\begin{equation}\label{eq:non_sing_contr}
\sum_{s\in\mathbb{P}^{1}(\mathbb{F}_{q})\setminus\Sigma} |E_{s}(\mathbb{F}_{q})|=|V(\mathbb{F}_{q})|-\Delta(q,t)
\end{equation}
where
\begin{equation}
\begin{split}
\Delta(q,t)=-2+\delta(-2,-t)-2q+6+\delta(4,\frac{t}{t-1})-q+\left(\frac{-t}{q}\right)+2+\delta(2,t)\\
+\delta(\delta(4,-1),\frac{t}{t-1})+q-1+\delta(-2,t)+\delta(q- \delta(2,-1) - \delta(2,-2),\frac{t}{t-1}).
\end{split}
\end{equation}
It simplifies to the form $\Delta(q,t)=-2q+4+\delta(2q+4+\delta(-4,-2),\frac{t}{t-1})$.

A simple analysis of Tate's algorithm \cite[IV \S 9]{Silverman_Advanced} allows to deduce that all components in singular fibres above $s\in\{-1,0,1\}$ in elliptic fibration $\pi: \mathcal{E}_{t}\rightarrow\mathbb{P}^{1}$ are defined over $\mathbb{Q}(t)$. For $s$ such that $s^2=\frac{t}{t-1}$ the irreducible nodal curve is defined over $\mathbb{Q}(\sqrt{\frac{t}{t-1}})$. Fibres at $s=\pm 1$ correspond to extended Dynkin diagram $\tilde{E}_{7}$, so fibre $E_{s}$ of $\pi$ over $\mathbb{F}_{q}$ has $8(q+1)-7=8q+1$ points. For $s=0$ we have singularity that correspond to $\tilde{A}_{3}$ diagram hence $E_{0}(\mathbb{F}_{q})=4(q+1)-4=4q$. 
Assume now that $s_{0}^2=\frac{t}{t-1}$. In that case we have a nodal singular point and the fibre of $\pi$ at $s_{0}$ is irreducible. An affine equation of $E_{s_{0}}$ is
\[y^2=\frac{1}{64(t-1)^4}\cdot x (8x(t-1)^2+1)^2.\]
Change of coordinates $x'=x\cdot 8(t-1)^2$, $y'=y\cdot 16 (t-1)^3$ transforms $E_{s_{0}}$ into
\[C:2(y')^2=x'(x'+1)^2.\]
Its projective closure $\overline{C}$ is a singular genus $0$ curve with singular point at $(-1:0:1)$ and parametrization morphism $\phi:\mathbb{P}^{1}\rightarrow \overline{C}$
\[\phi(f:r)=(f^2 r: \frac{1}{2}f(2f^2+r^2): \frac{r^3}{2}).\]
Its inverse $\phi^{-1}$ is a birational map with base scheme having one closed point at $(-1:0:1)$. Since $\phi(\pm \sqrt{1}{\sqrt{-2}}:1)=(-1:0:1)$ and $\phi(1:0)=(0:1:0)$ we have
\[C(\mathbb{F}_{q})=\{(-1:0:1),(0:1:0)\}\cup \{\phi(f:1): f\in\mathbb{F}_{q}\setminus \{\pm\frac{1}{\sqrt{-2}}\} \}.\]
Hence we obtain the formula for number of points on $E_{s_{0}}$ over $\mathbb{F}_{q}$
\[|E_{s_{0}}(\mathbb{F}_{q})| = 2+q+\delta(-2,-2).\]
A sum over bad fibres gives the total number of points
\[\sum_{s\in\Sigma}|E_{s}(\mathbb{F}_{q})|= 20q+2+ \delta(2q+4+\delta(-4,-2),\frac{t}{t-1}). \]
Combining the previous equation with \eqref{eq:non_sing_contr} proves the lemma.
\end{proof}

For a fixed prime $\ell$ and surface $\mathcal{E}_{t}$ we denote by $H_{t}$ the cohomology group $H^{2}_{et}((\mathcal{E}_{t})_{\overline{\mathbb{Q}}},\mathbb{Q}_{\ell})$. 
The cocycle map $c:\textrm{Pic}(\mathcal{E}_{t})\rightarrow H_{t}(1)$ maps divisors on $\mathcal{E}_{t}$ to elements of $H_{t}$ so that the map respects the Galois action on both sides and the action on the right--hand side is twisted ($(\cdot)(1)$ denotes the Tate twist). We call by $H_{t}^{alg}$ the image of $c$ and by $H_{t}^{tr}$ the orthogonal complement (with respect to the cup product on $H_{t}$) of $H_{t}^{alg}$. 
It follows that the dimension of the space $H_{t}^{alg}$ is equal to the rank of N\'{e}ron--Severi group of $\mathcal{E}_{t}$. For parameters $t\in\mathbb{Q}\setminus(S_{1}\cup S_{2})$ (not the on in Tables \ref{tab:rat_cm} and \ref{tab:quad_cm}) the rank is equal to $19$. Since the surface  $\mathcal{E}_{t}$ is a K3 surface it follows that $\dim_{\mathbb{Q}_{\ell}}H_{t}=22$, so $\dim_{\mathbb{Q}_{\ell}}H_{t}^{tr}=3$. Otherwise, the transcendental subspace $H_{t}^{tr}$ has dimension $2$. Moreover the action of geometric Frobenius $\textrm{Frob}_{p}$ for $p\neq\ell$ on $H_{t}^{alg}$ is by multiplication by $p$ when $\dim_{\mathbb{Q}_{\ell}}H_{t}^{alg}=19$ and by $\pm p$ when $\dim_{\mathbb{Q}_{\ell}}H_{t}^{alg}=20$. This follows from the fact that in the former case the space $H_{t}^{alg}$ is spanned by images of the zero section, general fibre and components of singular reducible fibres above $s\in \{0,\pm 1\}$ which are all defined over $\mathbb{Q}$. 
\begin{lemma}\label{lem:Trace_formula}
Let $\ell$ be a prime and $p$ be a prime ($\ell\neq p$) of good reduction for the elliptic surface $\mathcal{E}_{t}$ and $n\geq 1$ a positive integer. Surface $\mathcal{E}_{t}$ over $\mathbb{F}_{p}$ is of K3 type and it follows that
\[|\mathcal{E}_{t}(\mathbb{F}_{p^n})|=1+p^{2n}+19 p^n+t(n)+d(n)\]
where $t(n)$ is the trace of operator $\textrm{Frob}_{p}^{n}$ acting on $H_{t}^{tr}$ and $d(n)=\pm p^n$ for $\dim H_{t}^{tr} = 2$ and $0$ otherwise.
\end{lemma}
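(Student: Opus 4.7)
The plan is to apply the Grothendieck--Lefschetz trace formula to the smooth projective K3 surface $\mathcal{E}_t$ over $\mathbb{F}_p$ (using that $t$ is a prime of good reduction) and then read off the trace on each piece of the cohomological decomposition that has already been described in the paragraph preceding the lemma.

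First I would write
\[
|\mathcal{E}_{t}(\mathbb{F}_{p^n})| = \sum_{i=0}^{4} (-1)^i \mathrm{Tr}\bigl(\mathrm{Frob}_p^n \,\big|\, H^i_{et}((\mathcal{E}_t)_{\overline{\mathbb{F}_p}}, \mathbb{Q}_\ell)\bigr).
\]
Since $\mathcal{E}_t$ is of K3 type we have $H^0 \cong \mathbb{Q}_\ell$, $H^4 \cong \mathbb{Q}_\ell(-2)$, $H^1 = H^3 = 0$ and $\dim H^2 = 22$. The $H^0$ and $H^4$ terms contribute $1$ and $p^{2n}$ respectively, reducing the count to
\[
|\mathcal{E}_t(\mathbb{F}_{p^n})| = 1 + p^{2n} + \mathrm{Tr}\bigl(\mathrm{Frob}_p^n \,\big|\, H_t\bigr).
\]

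Next I would use the Galois-stable orthogonal decomposition $H_t = H_t^{alg} \oplus H_t^{tr}$. The $H_t^{tr}$ contribution is by definition $t(n)$. For $H_t^{alg}$ I invoke the fact established just before the lemma: the $19$-dimensional subspace spanned by the cycle classes of the zero section, a general fibre, and the components of reducible singular fibres above $s \in \{0,\pm 1\}$ consists of classes defined over $\mathbb{Q}$, so $\mathrm{Frob}_p$ acts on it by multiplication by $p$ (hence trace $19p^n$). In the non-CM situation $\dim H_t^{alg} = 19$, so this exhausts the algebraic part and we obtain $d(n) = 0$. In the CM situation $\dim H_t^{alg} = 20$ and there is a one-dimensional Galois-stable complement in $H_t^{alg}$ on which Frobenius acts by a scalar; this scalar is a Weil number of weight $2$ that is rational (being an eigenvalue on a one-dimensional algebraic subspace), hence equals $\pm p$, giving $d(n) = \pm p^n$.

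The genuinely delicate step is the last sentence: identifying that the extra eigenvalue is exactly $\pm p$ rather than an arbitrary quadratic Weil number. The argument is that the orthogonal complement inside $H_t^{alg}$ of the $19$-dimensional $\mathbb{Q}$-rational block is itself Galois-stable and one-dimensional, so $\mathrm{Frob}_p$ acts on it by a single eigenvalue $\lambda \in \overline{\mathbb{Q}_\ell}$; by Deligne's weight bound $|\lambda| = p$, and because the space is algebraic (spanned by the class of the extra Mordell--Weil generator, possibly up to Galois conjugation coming from the quadratic field $\mathbb{Q}(S)$), $\lambda$ lies in $\mathbb{Q}$, forcing $\lambda = \pm p$. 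Combining the three contributions
\[
1 + p^{2n} + 19 p^n + t(n) + d(n)
\]
yields the claimed formula.
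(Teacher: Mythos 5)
Your proposal is correct and its skeleton agrees with the paper's: Grothendieck--Lefschetz, the K3 cohomology table, the Galois--stable splitting $H_{t}=H_{t}^{alg}\oplus H_{t}^{tr}$, and the trace $19p^{n}$ coming from the $\mathbb{Q}$-rational classes (zero section, general fibre, components of the reducible fibres checked via Tate's algorithm). Where you genuinely diverge is the delicate step. The paper never looks at the extra algebraic line at all: it notes that the characteristic polynomial of Frobenius on $H^{2}$ factors as $(x-q)^{19}f(x)$ with $f\in\mathbb{Z}[x]$ of degree $3$ whose roots all have absolute value $q$, and the odd degree forces a real root, hence $f(\pm q)=0$. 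You instead analyse the one-dimensional orthogonal complement of the rational block inside $H_{t}^{alg}$ directly. Your route is arguably the more robust one, since the parity trick by itself only shows that \emph{some} root of $f$ equals $\pm q$ without attributing it to the algebraic class (in the paper that attribution is really completed only in Lemma~\ref{lem:CM_Picard_lattice}); your argument attributes it from the start, which is exactly what the statement about $d(n)$ requires.

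Two caveats on your version. First, the inference ``one-dimensional algebraic subspace, hence $\lambda\in\mathbb{Q}$'' is too quick as written: one-dimensionality over $\mathbb{Q}_{\ell}$ only gives $\lambda\in\mathbb{Q}_{\ell}$, and an element $p\zeta$ with $\zeta$ a nontrivial $(\ell-1)$-st root of unity is also in $\mathbb{Q}_{\ell}$ with all archimedean absolute values equal to $p$. What actually forces $\lambda=\pm p$ is the content of your parenthetical: the line is spanned, modulo the rational block, by the class of a divisor defined over the quadratic field $\mathbb{Q}(S)$, so Frobenius permutes this class with its conjugate and hence acts on the corresponding rank-one piece of the N\'{e}ron--Severi lattice by $\pm 1$, i.e.\ by $\pm p$ after untwisting; make that step explicit rather than appealing to rationality of the eigenvalue. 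Second, the assertion that $\mathcal{E}_{t}$ over $\mathbb{F}_{p}$ is of K3 type is part of the lemma's conclusion, not a hypothesis: the paper derives it (Euler characteristic $24$ via Oguiso, the cohomology table, and its persistence under good reduction via smooth proper base change), whereas you assume it, so that portion should be cited or reproved.
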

\begin{proof}
Let $q=p^n$. Surface $\mathcal{E}_{t}$ is of K3 type in characteristic zero. This follows from the properties of the Weierstrass model of $\mathcal{E}_{t}$: it is a globally minimal model and from \cite[Thm. 1]{Oguiso_c2} it follows that the Euler characteristic $\chi(\mathcal{E}_{t},\mathcal{O}_{\mathcal{E}_{t}})$ for the structure sheaf $\mathcal{O}_{\mathcal{E}_{t}}$ is equal to $24$ for any $t$. This follows from explicit computation using the Tate's algorithm and \cite[Thm. 2.8]{Shioda_Mordell_Weil}, \cite[Chap. V, Rem. 1.6.1]{Hartshorne_Algebraic_geometry}. In characteristic zero it follows that the singular cohomology of $\mathcal{E}_{t}$ satisfies the conditions
\begin{align*}
H^{0}_{\textrm{sing}}(\mathcal{E}_{t},\mathbb{C})\cong&\ \mathbb{C}\cong H^{4}_{\textrm{sing}}(\mathcal{E}_{t},\mathbb{C})\\
H^{1}_{\textrm{sing}}(\mathcal{E}_{t},\mathbb{C})= &\  0=H^{3}_{\textrm{sing}}(\mathcal{E}_{t},\mathbb{C}),\\
H^{2}_{\textrm{sing}}(\mathcal{E}_{t},\mathbb{C})\cong&\ \mathbb{C}^{22}.
\end{align*}
For the proof, cf. \cite[VIII, Prop. 3.3]{BHPV_book} From \cite[SGA IV, Exp. XI]{SGA_4_part3} it follows that the same assertion holds for $\ell$--adic cohomology if we replace $\mathbb{C}$ with $\mathbb{Q}_{\ell}$. Since we have good reduction at $q$, those properties remain for the base change to $\mathbb{F}_{q}$. Let $\overline{\mathcal{E}_{t}} = (\mathcal{E}_{t})_{\overline{\mathbb{F}_{q}}}$. Frobenius endomorphism acts on $H^{0}_{\textrm{et}}(\overline{\mathcal{E}_{t}},\mathbb{Q}_{\ell})$ by multiplication by $1$ and by Poincare duality the action on $H^{4}_{\textrm{et}}(\overline{\mathcal{E}_{t}},\mathbb{Q}_{\ell})$ is by multiplication by $q^2$, cf. \cite[Chap. VI, Thm. 12.6]{Milne_etale_book}. For $H^{2}_{\textrm{et}}(\overline{\mathcal{E}_{t}},\mathbb{Q}_{\ell})$ the action of Frobenius on classes corresponding to algebraic cycles defined over $\mathbb{F}_{q}$ is by multiplication by $q$. In particular, this is true for all classes coming from reducible fibres of $\mathcal{E}_{t}\rightarrow\mathbb{P}^{1}$ since the components of the fibres are all defined over $\mathbb{Q}$ and after base change, over $\mathbb{F}_{q}$ (as was checked by Tate's algorithm).

The contribution from reducible fibres, zero section and a general fibre to $H^{2}_{\textrm{et}}(\overline{\mathcal{E}_{t}},\mathbb{Q}_{\ell})$ has dimension $19$. Characteristic polynomial of Frobenius has the form $(x-q)f(x)$ where $f(x)\in\mathbb{Z}[x]$ and all its roots have absolute value $q$ by Weil Conjectures. Since its degree is odd, it follows that $f$ has a real root, so it must be $f(\pm q) = 0$.

Now Lemma follows from the application of Grothendieck-Lefschetz Trace Formula for $\ell$--adic cohomology (cf. \cite[Chap. VI, Thm. 12.3]{Milne_etale_book}).
\end{proof}

\begin{lemma}\label{lem:CM_Picard_lattice}
	Suppose that $t\in\mathbb{Q}^{\times}$ and that the Picard rank of $\mathcal{E}_{t}$ is $20$. Either $d(n)=p^{n}$ for all $n$ and $p$ of good reduction or there is a quadratic character $\chi$ corresponding to a degree $2$ subfield of $\mathbb{Q}(\sqrt{(t-1)/t},\omega)$, where $E_{1}$, $E_{2}$ have complex multiplication by an order in $\mathbb{Q}(\omega)$ such that $d(n)=\chi(p) p^{n}$ for $n$ odd and $d(n)=p^{n}$ for $n$ even. 
\end{lemma}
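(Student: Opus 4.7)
The plan is to analyze how Frobenius acts on the single extra algebraic class $c$ that appears in $H_t^{alg}$ when the Picard rank jumps from $19$ to $20$. By Lemma \ref{lem:Trace_formula}, on the generic $19$-dimensional subspace spanned by $\mathcal{O}$, the general fibre, and components of reducible fibres (all defined over $\mathbb{Q}$, as checked by Tate's algorithm for $s\in\{0,\pm 1\}$), Frobenius acts by multiplication by $p$. The sign $\pm$ in $d(n)$ is therefore entirely determined by how $\textrm{Frob}_p^n$ acts on the one-dimensional line $\mathbb{Q}_\ell\cdot c$, where $c$ is the class realising the rank jump.

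First I would identify $c$ geometrically. By the Shioda--Inose structure (Figure \ref{fig:Shioda_Inose}) and the isomorphism $T_{\mathcal{E}_t}\cong T_{E_1\times E_2}$, the rank-$20$ case is equivalent to $E_1$, $E_2$ being CM, in which case $\Hom(E_1,E_2)$ has rank $2$ by Lemma \ref{lem:End_homs}. One generator is the $2$-isogeny $\phi:E_1\to E_2$ defined over $\mathbb{Q}(S)$; composing with a generator $\alpha$ of the CM order of $E_1$ produces a second independent element $\phi\circ\alpha\in\Hom(E_1,E_2)$. Via Shioda's theorem \cite{Shioda_Correspondence} applied to the Inose fibration \eqref{eq:Inose_fibration}, this second generator corresponds to a section defined over the compositum $\mathbb{Q}(S,\omega)$, and hence the extra class $c$ on $\mathcal{E}_t$ (which is birational to $\Ino(E_1,E_2)$) is defined over at most $\mathbb{Q}(S,\omega)$.

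Next I would determine the exact field $L\subseteq\mathbb{Q}(S,\omega)$ over which $\mathbb{Q}_\ell\cdot c$ is Galois-stable, i.e.\ the fixed field of the stabiliser of the line $\mathbb{Q}_\ell\cdot c$ in $\Gal(\overline{\mathbb{Q}}/\mathbb{Q})$. Since the $2$-isogeny $\phi$ is $\Gal(\overline{\mathbb{Q}}/\mathbb{Q}(S))$-invariant and since complex conjugation in the CM field sends $\alpha$ to $-\alpha$, any non-trivial Galois element can act on $c$ only by $\pm 1$; thus $[L:\mathbb{Q}]\le 2$. If $L=\mathbb{Q}$ then $\textrm{Frob}_p$ acts on $c$ by $+p$ for every $p$ of good reduction, and $d(n)=p^n$, which is the first alternative. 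Otherwise $L$ is a quadratic subfield of $\mathbb{Q}(S,\omega)=\mathbb{Q}(\sqrt{(t-1)/t},\omega)$; denote by $\chi$ its associated quadratic Dirichlet character. Then for $p$ of good reduction (in particular unramified in $L$), $\textrm{Frob}_p$ acts on $c$ by $+p$ or $-p$ according as $p$ splits or is inert in $L$, i.e.\ by $\chi(p)\cdot p$. Consequently $d(n)=\chi(p)^n\, p^n$, which gives $p^n$ for $n$ even and $\chi(p)\,p^n$ for $n$ odd.

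The step I expect to be delicate is verifying that the Shioda--Inose correspondence (whose explicit degree-$2$ map from $\Ino(E_1,E_2)$ onto $\Kum(E_1,E_2)$ is not given over $\mathbb{Q}(t)$, as noted by the author) nonetheless transports the Galois module structure on $\Hom(E_1,E_2)$ to the Galois action on the transcendental-plus-extra-algebraic part of $H_t$ without introducing spurious field extensions. This is handled by the correspondence argument promised in Section \ref{sub:Corresp}: the graphs of all Galois conjugates of the Shioda--Inose map give a correspondence defined over $\mathbb{Q}(t)$, which induces a $\Gal(\overline{\mathbb{Q}}/\mathbb{Q})$-equivariant isomorphism on $H^2(\cdot,\mathbb{Q}_\ell)$, so the field of definition of $c$ as a class on $\mathcal{E}_t$ is precisely the field of definition of the corresponding element of $\Hom(E_1,E_2)$ up to sign, yielding $L\subseteq\mathbb{Q}(\sqrt{(t-1)/t},\omega)$ as claimed.
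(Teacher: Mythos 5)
Your strategy is in essence the paper's own: realise the twentieth algebraic class as a section of the Inose fibration \eqref{eq:Inose_fibration} corresponding, under Shioda's correspondence $\Theta\colon\Hom(E_1,E_2)\to F_1(\overline{\mathbb{Q}}(u))$, to a second isogeny obtained by composing $\phi$ with a generator of the CM order; bound its field of definition by $\mathbb{Q}(S,\omega)$; and read off the sign of Frobenius from the resulting quadratic character. However, there is a genuine gap: Shioda's theorem \cite{Shioda_Correspondence}, in the form you use it, requires $E_1$ and $E_2$ to be non-isomorphic over $\overline{\mathbb{Q}}$. By \eqref{eq:j_inv_formula1} the two $j$-invariants coincide exactly when $t(t-1)(256t-81)=0$, so among the rational rank-$20$ parameters the values $t=1$ and $t=2^{-8}\cdot 3^4$ violate this hypothesis, and they are certainly covered by the statement of the lemma. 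At these values the Inose fibration acquires an $I_2$ fibre, its Mordell--Weil rank is $1$ rather than $\rk\Hom(E_1,E_2)=2$, and the second section you invoke simply does not exist: the extra N\'eron--Severi class is a fibre component defined over $\mathbb{Q}$. The paper treats these two parameters separately, checking directly that $\NS(\mathcal{E}_t)$ contains a rank-$20$ subgroup spanned by $\mathbb{Q}$-rational divisors, which gives the first alternative $d(n)=p^n$; your proof needs the same case distinction, since as written the step ``this second generator corresponds to a section defined over $\mathbb{Q}(S,\omega)$'' fails precisely there.

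Two smaller points. Your appeal to the correspondence of Section \ref{sub:Corresp} is both unnecessary and slightly off-target: that correspondence is built to identify the \emph{transcendental} parts of cohomology, whereas the class you must control is algebraic. The paper avoids this issue because \eqref{eq:Inose_fibration} is obtained from \eqref{eq:family_rank_19} by a $\mathbb{Q}(t)$-rational change of coordinates, hence is a second elliptic fibration on the same surface $\mathcal{E}_t$; a section of it is already a divisor class on $\mathcal{E}_t$ with the same field of definition, with the $\mathbb{Q}(u)$-rationality of $P$ supplied by the explicit point $Q_t$ of Section \ref{sec:explicit_point} and the field of definition of the second section supplied by the construction in \cite[\S 3]{Kumar_Kuwata}. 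Finally, ``any non-trivial Galois element acts on $c$ only by $\pm 1$'' needs more than the remark about complex conjugation: a priori $\sigma(P_\omega)=\pm P_\omega+kP$, and it is exactly this upper-triangular action (recorded in the paper as a matrix with the character $\chi$ in the lower-right corner) that shows the character lives on the quotient modulo the $19$-dimensional $\mathbb{Q}$-rational part --- equivalently on the orthogonal projection of the class of $P_\omega$ --- rather than on the section itself.
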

\begin{proof}
	In this proof we call by $F_{n}:=F^{(n)}_{E_{1},E_{2}}$ a natural elliptic fibration obtained from the model $g(x_1)=u^n f(x_2)$ where $f$ and $g$ are polynomials that define Weierstrass models of $E_{1}$, $E_{2}$. We already considered above the Inose fibration \eqref{eq:Inose_fibration} which is $F_{1}$ (up to change of coordinates over $\mathbb{Q}(\sqrt{(t-1)/t})$ and Kummer fibration which is $F_{2}$. From \cite{Shioda_Correspondence}, \cite{Kumar_Kuwata} it follows that if $E_{1}$ is not isomorphic over $\overline{\mathbb{Q}}$ to $E_{2}$ there is an isomorphism 
	\[\Theta:\Hom(E_{1},E_{2})\rightarrow F_{1}(\overline{\mathbb{Q}}(u)) \]
	such that it sends an isogeny $\phi$ to a point $P_{\phi}$ in such a way that
	$\langle P_{\phi},P_{\phi}\rangle = \deg \phi$ and the height pairing 
	\[(\phi,\psi)=\frac{1}{2}(\deg(\phi+\psi)-\deg\phi-\deg\psi)\]
	maps to $2\langle P_{\phi},P_{\psi}\rangle$.
	
	Suppose $t=1$ or $t=\frac{81}{256}$, then curves $E_1$ and $E_{2}$ are isomorphic over $\mathbb{Q}(\sqrt{-2})$ or $\mathbb{Q}(\sqrt{-7},\sqrt{5})$, respectively In this case we check directly that there is a rank $20$ subgroup in $\NS(\mathcal{E}_{1})$ of the fibration \eqref{eq:family_rank_19} so indeed $d(n)=p^{n}$ in this case.
	
	Now suppose $t\in\mathbb{Q}\setminus\{0,1\}$. Then the curves $E_{1}$, $E_{2}$ cannot be isomorphic over $\overline{\mathbb{Q}}$ so the map $\Theta$ is an isomorphism. Assume that $E_{1}$ and $E_{2}$ have complex multiplication by an order $R_{K}=\mathbb{Z}[\omega]$ with $K=\mathbb{Q}(\sqrt{-d})$. Suppose that $R_{K}\cong \End(E_{2},E_{2})$ in a natural way, i.e. $\alpha\mapsto [\alpha]$ where $[\alpha]$ is an isogeny of degree $N_{K/\mathbb{Q}}(\alpha)$. The curves $E_{1}$ and $E_{2}$ are connected by a two--isogeny $\phi$ defined over $\mathbb{Q}(\sqrt{(t-1)/t})$, so over $L=K(\sqrt{(t-1)/t})$ we have two isogenies: $\phi$ and $[\omega]\circ\phi$. They generate points $P$ and $P_{\omega}$ respectively under the map $\Theta$.
	
	Since $\End(E_{2},E_{2})\circ\phi$ is a finite index sublattice in $\Hom(E_{1},E_{2})$ then the lattice $\langle P,P_{\omega}\rangle$ is also of finite index in $F_{1}(\overline{\mathbb{Q}}(u))$. In fact, by construction \cite[\S 3]{Kumar_Kuwata} of map $\Theta$ those points are defined over $L$. Using the model \eqref{eq:Inose_fibration} we have that $P$ is defined over $\mathbb{Q}(u)$, cf. \ref{sec:explicit_point}. The Mordell--Weil group $F_{1}(\overline{\mathbb{Q}}(u))$ is free abelian of rank $2$ so the Galois group $G_\mathbb{Q}=\Gal(\overline{\mathbb{Q}}/\mathbb{Q})$ acts on the subgroup spanned by $P$ and $P_{\omega}$ in the following way
	\[\sigma\mapsto \left(\begin{array}{cc}
	1 & \kappa(\sigma) \\
	0 & \chi(\sigma)
	\end{array}\right)\]
	where $\chi:G_\mathbb{Q}\rightarrow \{\pm 1\}$ factors through a finite quotient $\chi:\Gal(K'/\mathbb{Q})\rightarrow\{\pm 1\}$ with $[K':\mathbb{Q}]\leq 2$. The N\'{e}ron--Severi group $\NS(F_{1})$ is spanned by 19 divisors defined over $\mathbb{Q}$ which was checked using the model \eqref{eq:family_rank_19}. So we conclude that for prime $p$ of good reduction for $F_{1}$ and prime $\ell\neq p$ the image of $\NS(F_{1})\otimes\mathbb{Q}_{\ell}$ in $H^{2}_{et}((F_{1})_{\overline{\mathbb{F}}_{p}},\mathbb{Q}_{\ell}(1))$ is spanned by 19 classes generated over $\mathbb{F}_{p}$ and one class with Frobenius eigenvalue corresponding to the character $\chi$. 
\end{proof}

\begin{corollary}
	Let $t\in\mathbb{Q}^{\times}$ and $\chi$ be as in Lemma \ref{lem:CM_Picard_lattice}. Then for each $t$ the character $\chi$ is computed in Table \ref{tab:rat_cm} or \ref{tab:quad_cm}. Moreover, we determine the explicit Mordell--Weil lattice of fibration \eqref{eq:Inose_fibration} and hence determine the type of N\'{e}ron--Severi lattice of $\mathcal{E}_{t}$. The results are described in Table \ref{tab:NS_basis_CM_rat}
\end{corollary}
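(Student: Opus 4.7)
The plan is a case-by-case analysis across the fifteen CM parameters $t$ in Tables \ref{tab:rat_cm} and \ref{tab:quad_cm}. Since the CM order $R_K=\mathbb{Z}[\omega]$ of $E_2$ is already tabulated, the remaining tasks are (i) to determine the Galois action on a natural $\mathbb{Z}$-basis of $\Hom(E_1,E_2)$, which via Shioda's isomorphism $\Theta$ of Lemma \ref{lem:CM_Picard_lattice} identifies $\chi$, and (ii) to extract from this action the Mordell--Weil lattice of the Inose fibration \eqref{eq:Inose_fibration} and, after pushing back through Shioda--Tate, the N\'{e}ron--Severi type of $\mathcal{E}_t$.

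For (i), I would fix the basis $\{\phi,[\omega]\circ\phi\}$ of $\Hom(E_1,E_2)$, where $\phi$ is the degree $2$ isogeny with kernel $\langle(0,0)\rangle$, defined over $\mathbb{Q}(S)$ with $S=\sqrt{(t-1)/t}$, and $[\omega]$ is the CM endomorphism of $E_2$. Via $\Theta$ these isogenies correspond to sections $\{P,P_\omega\}$ in $F_1(\overline{\mathbb{Q}}(u))$, and $P$ is already realised over $\mathbb{Q}(u)$ by the explicit point $Q_t$ of Section~\ref{sec:explicit_point}. Hence $\sigma\in G_\mathbb{Q}$ fixes $P_\omega$ modulo $\mathbb{Z}P$ iff it fixes $[\omega]$ as an endomorphism of $E_2$. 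Writing $L=\mathbb{Q}(S)\cdot K$, which is either $K$ itself (when $t\in S_1$ and $\mathbb{Q}(S)=\mathbb{Q}$) or a biquadratic extension of $\mathbb{Q}$, one reads off $\chi$ as the Kronecker symbol associated with the quadratic subfield of $L$ over which $[\omega]$ is defined; this yields the rightmost column of both tables.

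For (ii), Shioda's height pairing gives immediately the Gram matrix of $\{P,P_\omega\}$:
\[
\langle P,P\rangle = 2,\qquad
\langle P,P_\omega\rangle = \Tr_{K/\mathbb{Q}}(\omega),\qquad
\langle P_\omega,P_\omega\rangle = 2\,\mathrm{N}_{K/\mathbb{Q}}(\omega),
\]
which pins down the Mordell--Weil lattice of \eqref{eq:Inose_fibration}. Combining with the trivial lattice spanned by the two $II^{*}$ components, the zero section and the general fibre (plus the extra $I_2$ in the exceptional cases $t\in\{1,81/256\}$) and applying the Shioda--Tate formula produces $\NS(\mathcal{E}_t)$ as recorded in Table \ref{tab:NS_basis_CM_rat}. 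Transporting back to the fibration \eqref{eq:family_rank_19}, one matches the resulting lattice against the list $L_0,L_1,L_2,L_4$ from Table \ref{tab:NS_for_CM}; the case $L_1$ is eliminated by comparing the computed discriminant $-(8\,\mathrm{N}_{K/\mathbb{Q}}(\omega)-\Tr_{K/\mathbb{Q}}(\omega)^2)$ against the possibilities allowed by the intersection-theoretic constraints in the proof of the preceding theorem.

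The main obstacle lies in the delicate bookkeeping for Table \ref{tab:quad_cm}, where $R_K$ can be a non-maximal order and $[\omega]$ need not be rational over $\mathbb{Q}(S)$. In each such row one has to check by direct computation on the Weierstrass equations \eqref{eq:E1_curve}--\eqref{eq:E2_curve} (or equivalently via the ring class field of $R_K$) whether $[\omega]$ descends to $\mathbb{Q}(S)$ or to one of the other two quadratic subfields of $L$. Once this verification is settled row by row, both the character $\chi$ and the N\'{e}ron--Severi lattice drop out of the computation above.
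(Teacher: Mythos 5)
Your skeleton -- the two isogenies $\phi$ and $[\omega]\circ\phi$, Shioda's $\Theta$, the height pairing, and Shioda--Tate -- is exactly the skeleton of the paper's proof, which is an explicit MAGMA algorithm along these lines; your part (i) is also sound in spirit (the paper pins down $\chi$ differently, by reducing the two sections modulo many primes, reading off whether they are defined over $\mathbb{F}_{p}$ or $\mathbb{F}_{p^2}$, and comparing with the quadratic subfields $\mathbb{Q}(\sqrt{D})$ of $KL$; note that $\chi$ only depends on the $G_{\mathbb{Q}}$-action on the Mordell--Weil group tensored with $\mathbb{Q}$, so this part is insensitive to the issues below). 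Part (ii), however, contains two concrete errors. First, $\{\phi,[\omega]\circ\phi\}$ spans only $R_{K}\cdot\phi$, which is in general a \emph{proper} finite-index sublattice of $\Hom(E_{1},E_{2})$: the ideal $\Hom(E_{1},E_{2})\circ\hat{\phi}\subset R_{K}$ can lie strictly between $2R_{K}$ and $R_{K}$, and it does so in $13$ of the $15$ rows of Table \ref{tab:NS_basis_CM_rat}. This is precisely why the paper's algorithm contains an explicit saturation step ("we compute its saturation"), which your plan omits. Second, your Gram matrix uses the wrong normalization of $\Theta$: the height of $P_{\phi}$ is $2\deg\phi$, not $\deg\phi$. (The statement of Lemma \ref{lem:CM_Picard_lattice} is itself garbled on this point, but the correct normalization is forced by the paper's own data: in the non-CM case the degree-$2$ isogeny produces the $\langle-4\rangle$ summand of Theorem \ref{thm:Neron_Severi_non_CM}, i.e.\ a height-$4$ generator, and at $t=1$ the paper records that $Q_{1}$ has height $4$.)

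These two mistakes cancel at the level of determinants exactly when the saturation index is $2$, which is why your formula (whose determinant is $4N_{K/\mathbb{Q}}(\omega)-\Tr_{K/\mathbb{Q}}(\omega)^2=|\mathrm{disc}(R_{K})|$, incidentally not the expression $8N_{K/\mathbb{Q}}(\omega)-\Tr_{K/\mathbb{Q}}(\omega)^2$ you quote) reproduces the discriminants of most rows of Table \ref{tab:NS_basis_CM_rat}; but the lattices themselves come out wrong, and in the remaining rows even the determinants fail. Concretely, for $t=3^{2}$ ($R_{K}=\mathbb{Z}[\sqrt{-6}]$) your recipe gives the Gram matrix $\mathrm{diag}(2,12)$, whereas the correct Mordell--Weil lattice is $\mathrm{diag}(4,6)$, matching the table entry $[-4,0,-6]$: these have the same determinant but are not isometric, since yours represents $2$, and under $\Theta$ a height-$2$ section would force a degree-$1$ isogeny $E_{1}\cong E_{2}$, impossible for $t\notin\{1,81/256\}$. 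For $t=2^{-5}\cdot3^{4}$ ($R_{K}=\mathbb{Z}[i]$) and $t=-2^{-4}\cdot3^{2}$ ($R_{K}=\mathbb{Z}[\zeta_{3}]$), where the saturation index is $1$, your determinants are $4$ and $3$ against the correct $16$ and $12$. So as written the proposal does not recover Table \ref{tab:NS_basis_CM_rat}: you need both the doubled height pairing and the saturation of $\langle P,P_{\omega}\rangle$ inside the Mordell--Weil group, and it is the choice of saturating vector ($P_{\omega}/2$ versus $(P+P_{\omega})/2$) that distinguishes the off-diagonal entries $b=0$ from $b=2$ in the table -- information your computation cannot see at all.
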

\begin{proof}
	We provide a proof in the form of an algorithm which was implemented in MAGMA \cite{MAGMA} (files available on request).
	
	\begin{itemize}
	\item For each fixed $t\in\mathbb{Q}^{\times}$ we check what is the field of definition $L$ of $E_{1}$ and $E_{2}$. 
	
	\item We compute the $2$--isogeny $\phi$ with kernel spanned by $(0,0)$. Suppose that $E_{1}$ and $E_{2}$ are not $\overline{\mathbb{Q}}$--isomorphic. Then we check what is the CM field $K$ of $E_{2}$ and CM is by order $R_{K}=\mathbb{Z}[\omega]$. 
	
	\item Over compositum $KL$ we compute the isogeny $[\omega]\circ\phi$ by factorizing the division polynomial of $E_{2}$ of $N_{K/\mathbb{Q}}(\omega)$--torsion. This produces two isogenies $\phi,\psi$ from $E_{1}$ to $E_{2}$. 
	
	\item We compute the Gram matrix with respect to pairing $(\cdot,\cdot)$. 
	
	\item The lattice spanned by $\phi,\psi$ is of finite index in $\Hom(E_{1},E_{2})$. We compute its saturation. We use the fact that $\Hom(E_{1},E_{2})$ is torsion--free hence every element is defined over $KL$.
	
	\item Now, for the basis $\alpha,\beta$ of $\Hom(E_{1},E_{2})$ we reduce all morphisms module a prime ideal in $KL$ above $p$ of good reduction for the surface $\mathcal{E}_{t}$.
	
	\item For such a pair $\alpha_{p}$, $\beta_{p}$ we use the algorithm of \cite[\S 3.1]{Kumar_Kuwata}. We construct from that a pair of points $P_{\alpha_{p}}$, $P_{\beta_{p}}$ on curve \eqref{eq:Inose_fibration} which span a basis of reduction of $F_{1}(\overline{\mathbb{Q}})$. 
	
	\item We produce such pairs of reduced points for several primes $p$. The field of definition of the point is $\mathbb{F}_{p}^{2}$ when $\chi(p)=-1$ and $\mathbb{F}_{p}$ when $\chi(p)=1$.
	
	\item We compute all quadratic subfields $\mathbb{Q}(\sqrt{D})$, $D$ squarefree, of $KL$ and after comparison with sufficiently many primes we conclude with one choice of $D$.
\end{itemize}
\end{proof}

\begin{table}[htb]
\[\begin{array}{l|c}
t & \NS(\mathcal{E}_{t})= E_{8}(-1)^2\oplus U\oplus \left(\begin{array}{cc} a & b\\ b & c \end{array}\right)\\
\hline
\hline
2^{-5} \cdot 3^4 & [a,b,c]=[-4,0,-4]\\
1 & [-2,0,-4]\\
-2^{-4} \cdot 3^2 & [-4,2,-4]\\
-2^{-8} \cdot 3^4 \cdot 7^2 & [-4,2,-8]\\
2^{-8} \cdot 3^4 & [-4,2,-8]\\
3^2 & [-4,0,-6]\\
3^4 \cdot 11^2 & [-4,0,-22]\\
-2^4 \cdot 3 & [-4,2,-10]\\
-2^2 & [-4,2,-6]\\
3^4 & [-4,0,-10]\\
-2^6 \cdot 3^4 \cdot 5 & [-4,2,-26]\\
7^4 & [-4,0,-18]\\
-2^2\cdot 3^4 & [-4,2,-14]\\
3^8\cdot 11^4 & [-4,0,-58]\\
-2^2 \cdot 3^4 \cdot 7^4 & [-4,2,-38]
\end{array}\]

\caption{N\'{e}ron--Severi lattice of $\mathcal{E}_{t}$ for rational CM cases}\label{tab:NS_basis_CM_rat}
\end{table}

We combine now previous results to prove the following
\begin{corollary}\label{cor:Trace_explicit}
	Let $\ell$ be a prime and $p$ be a prime ($\ell\neq p$) of good reduction for the elliptic surface $\mathcal{E}_{t}$ and $n\geq 1$ a positive integer, $q=p^n$. Let $t(n)$ and $d(n)$ be defined as in the Lemma \ref{lem:Trace_formula}, then
	\[t(n)+d(n)=-\frac{1}{q}+\frac{1}{q(q-1)}\sum_{m=0}^{q-2}g(4m)g(-m)^4\omega(\frac{1}{256t})^m.\]
\end{corollary}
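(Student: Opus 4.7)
The statement is essentially a bookkeeping exercise once the three preceding ingredients are assembled, so the plan is simply to chain them together and simplify. First I would invoke Lemma \ref{lem:Trace_formula} to express
\[
|\mathcal{E}_{t}(\mathbb{F}_{q})|=1+q^{2}+19q+t(n)+d(n),
\]
then use Lemma \ref{lem:Point_count_identity} to rewrite the left--hand side as $22q-2+|V_{t}(\mathbb{F}_{q})|$, and finally substitute the Beukers--Cohen--Mellit point count \eqref{eq:Hypergeometric_BCM_ident} for $|V_{t}(\mathbb{F}_{q})|$. Solving for $t(n)+d(n)$ reduces the claim to the identity
\[
-2+22q+\frac{(q-1)^{3}}{q}-(1+q^{2}+19q)=-\frac{1}{q},
\]
which is a direct algebraic check after expanding $(q-1)^{3}=q^{3}-3q^{2}+3q-1$.

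Before chaining these results I would verify that the hypotheses line up: Lemma \ref{lem:Point_count_identity} requires $q$ odd and the numerator and denominator of $t$, together with $t-1$, coprime to $q$, while Lemma \ref{lem:Trace_formula} requires $p$ to be a prime of good reduction for $\mathcal{E}_{t}$ and $\ell\neq p$. The good reduction condition implies in particular that $2$, the numerator and denominator of $t$, and $t-1$ are all units at $p$ (this follows from the list of singular fibres of \eqref{eq:family_rank_19} and the reduction behaviour encoded in the Weierstrass model), so all preceding hypotheses are automatically satisfied for the primes $p$ appearing in the corollary, and the same holds for any $q=p^{n}$.

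There is essentially no obstacle beyond this compatibility of hypotheses and the elementary simplification above; the hypergeometric content has already been absorbed into \eqref{eq:Hypergeometric_BCM_ident} and the geometric content into Lemmas \ref{lem:Point_count_identity} and \ref{lem:Trace_formula}. The only delicate point worth emphasising is that the sum on the right--hand side of \eqref{eq:Hypergeometric_BCM_ident} is independent of the chosen additive character $\psi_{q}$ (by the transformation law \eqref{eq:Gauss_sum_invariance} applied to each Gauss sum together with the multiplicative character identity $\omega^{4m}\cdot\omega^{-4m}=1$ coming from the balance between $g(4m)$ and the four factors $g(-m)$), so the final formula for $t(n)+d(n)$ is unambiguous and purely intrinsic to the $\ell$--adic realisation.
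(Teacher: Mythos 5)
Your proposal is correct and is essentially identical to the paper's own (one-line) proof, which likewise just combines Lemma \ref{lem:Trace_formula}, Lemma \ref{lem:Point_count_identity} and equation \eqref{eq:Hypergeometric_BCM_ident}; your algebraic simplification $-2+22q+\frac{(q-1)^{3}}{q}-(1+q^{2}+19q)=-\frac{1}{q}$ is exactly the computation the paper leaves implicit. The additional checks you make (compatibility of hypotheses at primes of good reduction, independence of the additive character) are sound and only make explicit what the paper takes for granted.
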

\begin{proof}
We combine Lemma \ref{lem:Trace_formula} with Lemma \ref{lem:Point_count_identity} and equation \eqref{eq:Hypergeometric_BCM_ident}.
\end{proof}

We need an explicit hypergeometric function 
\[H_{q}(\frac{1}{6},\frac{5}{6};\frac{1}{4},\frac{3}{4}|t)=\frac{1}{1-q}\sum_{m=0}^{q-2}q^{-2+s(m)}g(6m)g(m)g(-4m)g(-3m)\omega(-\frac{2^2}{3^3}t)^m\]
where $s(m)$ is the multiplicity of $e^{2\pi i m/(q-1)}$ in $(x-1)^2(x+1)(x^2+x+1)$.

\begin{theorem}[\protect{\cite[Cor. 1.7]{Beukers_Cohen_Mellit}}]\label{thm:Elliptic_curve_trace}
	Let $q$ be a power of a prime not divisible by $2$ or $3$. Let $E_{a,b}:y^2=x^3-ax+b$ be a Weierstrass model for an elliptic curve with $a,b\in\mathbb{F}_{q}\setminus{0}$. Then
	\[|E_{a,b}(\mathbb{F}_{q})|=q+1-\omega(a/b)^{(q-1)/2}qH_{q}(\frac{1}{6},\frac{5}{6};\frac{1}{4},\frac{3}{4}|\frac{27b^2}{4a^3}).\]
\end{theorem}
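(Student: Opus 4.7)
The plan is to compute $|E_{a,b}(\mathbb{F}_q)|$ as a Legendre-symbol character sum and then identify that sum as a finite hypergeometric evaluation. Writing $\chi_2=\omega^{(q-1)/2}$ for the quadratic character on $\mathbb{F}_q^\times$, one has the standard identity
\[|E_{a,b}(\mathbb{F}_q)| = q+1+\sum_{x\in\mathbb{F}_q}\chi_2(x^3-ax+b),\]
so the task reduces to evaluating $S(a,b)=\sum_{x}\chi_2(x^3-ax+b)$.

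First I would perform a rescaling $x=\mu X$ and pull $\mu^3$ out of the cubic to reduce to a one-parameter family $X^3-cX+1$ whose coefficient $c$ depends only on $t=27b^2/(4a^3)$. The reduction picks up a factor $\chi_2(\mu^3)=\chi_2(\mu)$; tracking how $\mu$ depends on $a$ and $b$ yields exactly the twist $\omega(a/b)^{(q-1)/2}$ appearing in the statement. This reflects the classical fact that two Weierstrass curves with the same $j$-invariant are quadratic twists, so their Frobenius traces differ by the quadratic character of the twisting scalar.

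Next I would invoke the general point-counting identity \cite[Thm.~1.5]{Beukers_Cohen_Mellit} (a specialization of which is \cite[Prop.~4.3]{Beukers_Cohen_Mellit}, already used in this paper) applied to the one-parameter cubic family in depressed form. For the hypergeometric datum $(\alpha,\beta)=(1/6,5/6;1/4,3/4)$ the relevant cyclotomic data is $\Phi_6/\Phi_4$: the sixth roots arise because the discriminant of $X^3-cX+1$ involves $\sqrt{-3}$, bringing in cube roots of unity that combine with the sign from $\chi_2$ to produce sixth roots, while the fourth roots come from the order-two character $\chi_2$. The cubic Hasse--Davenport distribution relation for $g(3m)$ and the reflection identity $g(m)g(-m)=\chi_2(-1)q$ for $m\not\equiv 0$ then collapse the resulting Gauss-sum expression into the normalization \eqref{eq:hyper_sum} of $H_q(1/6,5/6;1/4,3/4|t)$, up to the prefactor $-q\,\omega(a/b)^{(q-1)/2}$ predicted by the theorem.

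The main obstacle is the combinatorial bookkeeping: one has to align the prefactor $(-1)^{r+s}$, the power $q^{-s(0)+s(m)}$ coming from cyclotomic multiplicities in the common factor $D(x)$, and the sign $\epsilon$ in \eqref{eq:hyper_sum} with the Gauss-sum expression produced by the character-sum expansion, and verify that the residual scalar is exactly $-q$. Once this calibration is carried out, the theorem follows directly from the character-sum identity.
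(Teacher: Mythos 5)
The paper does not prove this statement at all: it is imported verbatim as \cite[Cor.~1.7]{Beukers_Cohen_Mellit} (the attribution is even in the theorem header), so there is no internal argument to compare against and your proposal must stand on its own as a reconstruction of the Beukers--Cohen--Mellit proof. Your first two steps do stand. The count $|E_{a,b}(\mathbb{F}_q)|=q+1+\sum_{x}\chi_2(x^3-ax+b)$ is standard, and the twist reduction works --- though normalising to $X^3-cX+1$ requires a cube root of $b$, which need not exist in $\mathbb{F}_q$. The clean choice is $x=(b/a)X$, which gives
\begin{equation*}
\sum_{x}\chi_2(x^3-ax+b)=\chi_2(a/b)\sum_{X}\chi_2\left(X^3-cX+c\right),\qquad c=a^3/b^2,
\end{equation*}
so the one-parameter family is $X^3-cX+c$, with $27c^2/(4c^3)=27b^2/(4a^3)$; this is exactly where the prefactor $\omega(a/b)^{(q-1)/2}$ comes from, and it reduces the theorem to the case $a=b$.

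The genuine gap is your third step, which is the entire content of the theorem. You propose to invoke \cite[Thm.~1.5]{Beukers_Cohen_Mellit} for ``the one-parameter cubic family in depressed form,'' but that theorem does not apply to an arbitrary family: it counts points on the specific toric model attached to the hypergeometric datum, which for $(1/6,5/6;1/4,3/4)$, i.e.\ $(p_1,p_2)=(6,1)$, $(q_1,q_2)=(3,4)$, is the curve $x_1^6x_2=\lambda y_1^3y_2^4$ inside $\{x_1+x_2=1,\ y_1+y_2=1\}$ --- not the Weierstrass cubic $Y^2=X^3-cX+c$. Bridging these two curves, or alternatively evaluating $\sum_X\chi_2(X^3-cX+c)$ directly (additive-character linearisation, then the Hasse--Davenport product relations to manufacture $g(6m)g(m)g(-4m)g(-3m)$ with argument $-\tfrac{4}{27}t$ and residual factor $-q$), is precisely the substance of \cite[Cor.~1.7]{Beukers_Cohen_Mellit}, and it is absent from the proposal; as written, the argument is either incomplete or circular (if the missing step is discharged by citing Cor.~1.7 itself). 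The heuristics offered in its place are also off: the reflection formula is $g(m)g(-m)=\omega^m(-1)\,q$ for $\omega^m$ nontrivial, not $\chi_2(-1)q$; and $\Phi_4$ in the datum does not ``come from the order-two character $\chi_2$'' (that would give $\Phi_2$) --- the $4$ and the $6$ arise from duplication and triplication applied to Gauss sums $g(2m)$, not from the discriminant of the cubic. So the strategy outline is sound, but the decisive computation is missing.
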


\begin{corollary}\label{cor:hypergeom_ident}
	Let $t$ be a rational parameter and $q$ a power of prime coprime to $6$ and such that $\mathcal{E}_{t}$ has good reduction at $q$. Moreover, assume that $S^2=\frac{t-1}{t}$ has solutions over $\mathbb{F}_{q}$. Then 
	\begin{equation}
	q^2 (H_{q}(\frac{1}{6},\frac{5}{6};\frac{1}{4},\frac{3}{4}|\frac{2(7 \pm 9S)^2}{(5 \pm 3S)^3}))^2-q
	=-\frac{1}{q}+\frac{1}{q(q-1)}\sum_{m=0}^{q-2}g(4m)g(-m)^4\omega(\frac{1}{256t})^m.
	\end{equation}
\end{corollary}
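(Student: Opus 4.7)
The plan is to interpret both sides as traces of Frobenius on matched pieces of $\ell$--adic cohomology. By Corollary~\ref{cor:Trace_explicit}, the right-hand side of the identity equals $t(n)+d(n)$, where $q=p^{n}$, $t(n)$ is the trace of $\textrm{Frob}_{p}^{n}$ on the transcendental cohomology $H_{t}^{tr}$ and $d(n)$ is the Picard-jumping contribution. So it suffices to show $q^{2}H_{q}^{2}-q=t(n)+d(n)$.

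Next I pass to the abelian surface $A=E_{1}\times E_{2}$ via the Shioda--Inose structure of Section~\ref{sec:Shioda_Inose}. The sequence of correspondences $A\dashrightarrow \Kum(E_{1},E_{2})\dashrightarrow \Ino(E_{1},E_{2})$, rendered Galois-equivariant over $\mathbb{Q}$ by summing graphs over Galois conjugates (to be justified in the subsection on correspondences), produces an isomorphism of $\Gal(\overline{\mathbb{Q}}/\mathbb{Q})$-modules between the three-dimensional subspace of $H^{2}_{et}((\mathcal{E}_{t})_{\overline{\mathbb{Q}}},\mathbb{Q}_{\ell})$ complementary to the $19$ algebraic classes defined over $\mathbb{Q}$, and the orthogonal complement in the K\"unneth factor $H^{1}(E_{1})\otimes H^{1}(E_{2})$ of the rank-one sublattice generated by the graph of the $\mathbb{F}_{q}$-rational $2$-isogeny between $E_{1}$ and $E_{2}$. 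Taking traces on both sides then equates $t(n)+d(n)$ with the trace of Frobenius on $H^{1}(E_{1})\otimes H^{1}(E_{2})$ minus $q$.

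Under the assumption $S\in \mathbb{F}_{q}$, both $E_{1}$ and $E_{2}$ are defined over $\mathbb{F}_{q}$ and $\mathbb{F}_{q}$-isogenous via the kernel $\langle(0,0)\rangle$, hence share a common Frobenius trace $a_{q}$. The K\"unneth formula yields $\Tr(\textrm{Frob}_{p}^{n}|H^{1}(E_{1})\otimes H^{1}(E_{2}))=a_{q}^{2}$, giving $t(n)+d(n)=a_{q}^{2}-q$.

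It remains to express $a_{q}^{2}$ as $q^{2}H_{q}^{2}$. The substitution $x\mapsto x+2/3$ transforms $E_{1}$ into short Weierstrass form $y^{2}=x^{3}-a_{E_{1}}x+b_{E_{1}}$ with $a_{E_{1}}=(5+3S)/6$ and $b_{E_{1}}=-(7+9S)/27$, so $27b_{E_{1}}^{2}/(4a_{E_{1}}^{3})=2(7+9S)^{2}/(5+3S)^{3}$; the analogous shift $x\mapsto x-4/3$ on $E_{2}$ yields the conjugate parameter $2(7-9S)^{2}/(5-3S)^{3}$, explaining the $\pm$ alternative on the left. Theorem~\ref{thm:Elliptic_curve_trace} then gives $a_{q}(E_{i})=\pm qH_{q}(\tfrac{1}{6},\tfrac{5}{6};\tfrac{1}{4},\tfrac{3}{4}|\cdot)$, and squaring removes the sign. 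The principal obstacle is the Galois-equivariant identification of subspaces in the second step, especially in the CM case where the extra Picard class on $\mathcal{E}_{t}$ producing $d(n)$ must be shown to correspond under the Shioda--Inose correspondence to the extra $\overline{\mathbb{Q}}$-isogeny class on the $A$-side defined over the quadratic extension cut out by the character $\chi$ of Lemma~\ref{lem:CM_Picard_lattice}, so that the $3$-dimensional trace identity $t(n)+d(n)=a_{q}^{2}-q$ holds uniformly.
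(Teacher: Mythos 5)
Your proposal is correct and follows essentially the same route as the paper's proof: both reduce the right-hand side to $t(n)+d(n)$ via Corollary~\ref{cor:Trace_explicit}, identify this quantity with the Frobenius trace $a_q^2-q$ on a three-dimensional piece of cohomology coming from $E_1\times E_2$ through the Shioda--Inose structure (your $H^1(E_1)\otimes H^1(E_2)$ modulo the graph of the $\mathbb{F}_q$-rational $2$-isogeny is the same Galois module as the paper's $\Sym^2 H^1_{et}(E_1)$, with identical trace $\alpha^2+\alpha\beta+\beta^2=a_q^2-q$), and then invoke Theorem~\ref{thm:Elliptic_curve_trace} together with the short Weierstrass form computation giving the parameter $\frac{2(7\pm 9S)^2}{(5\pm 3S)^3}$, squaring to kill the sign ambiguity $\omega(a/b)^{(q-1)/2}=\pm 1$. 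Your explicit shifts $x\mapsto x+2/3$ and $x\mapsto x-4/3$ are correct and merely spell out what the paper leaves implicit.
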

\begin{proof}
	We observe that the $j$--invariant of $E_{a,b}$ satisfies $\frac{27b^2}{4a^3}=1-\frac{12^3}{j(E_{a,b})}$. We put curves \eqref{eq:E1_curve} and \eqref{eq:E1_curve} into this form and find that $\frac{27b^2}{4a^3}= \frac{2(7 \pm 9S)^2}{(5 \pm 3S)^3}$. 
	Fix a prime $\ell$ coprime to $q$. Then from trace formula we have
	\[\Psi=\Tr Frob |\Sym^{2}H^{1}_{et}((E_{1,t})_{\overline{\mathbb{F}}_{q}}, \mathbb{Q}_{\ell}) = \alpha^2+\beta^2+\alpha\beta\]
	where $(x-\alpha)(x-\beta)=x^2-ax+q$ and $|E_{1,t}(\mathbb{F}_{q})|=q+1-a$. It follows that $\Psi=a^2-q$. On the other hand we apply Theorem \ref{thm:Elliptic_curve_trace} and conclude by combing this with Corollary \ref{cor:Trace_explicit}. To finish, we observe that $\omega(a/b)^{(q-1)}=1$ by the definition of $\omega$.	
\end{proof}

Using the hypergeometric notation we can rewrite the formula from Corollary \ref{cor:hypergeom_ident} as 
\[q^2 (H_{q}(\frac{1}{6},\frac{5}{6};\frac{1}{4},\frac{3}{4}|\frac{2(7 \pm 9S)^2}{(5 \pm 3S)^3}))^2-q= H_{q}(\frac{1}{4},\frac{1}{2},\frac{3}{4};0,0,0|1-S^2)\]
where we expressed $t$ in terms of $S$ according to the relation $S^2=\frac{t-1}{t}$. In this form it can be consider as a general identity between two hypergeometric functions.

\section{Motive description}\label{sec:Motive_descr}
In this section we discuss how to explain a link between cohomology groups
$H_{t}=H^{2}_{et}(\tilde{V}_{t},\mathbb{Q}_{\ell})$ for a suitable nonsingular compactification of $\tilde{V}_{t}$ for $t\in\mathbb{Q}^\times$ and the hypergeometric formulas identified in the previous section. In fact we prove that there exists an effective Chow motive \cite[\S 1]{KMP} with $\ell$--adic realisations which has the trace formula described by \eqref{eq:hyper_sum}. This will be the motive that we call $H(\Phi_{2}\Phi_{4},\Phi_{1}^{3})$.

\subsection{Correspondences}\label{sub:Corresp}
We denote by $\Lambda$ the so--called K3 lattice, that is $E_{8}(-1)^2\oplus U^3$. Let $\eta: H^2(S,\mathbb{Z})\rightarrow \Lambda$ be the marking of complex K3 surface S. Let $\mathcal{M}$ be the moduli space of marked K3 surfaces. We have an injective period map
\[\pi:\mathcal{M}\rightarrow\Omega=\{[x]\in \mathbb{P}\Lambda_{\mathbb{C}}: \langle x,x\rangle = 0 , \langle x,\overline{x}\rangle >0\}\]
such that $\pi((S,\eta))=[\eta(\sigma_{S})]$ for a global holomorphic form on K3 surface S. The pair $(\NS(S),T_S)$ is uniquely determined by a choice of this period. This implies that if we have a map $X\rightarrow Y$ of K3 surfaces that induces a non--zero map of $H^{2,0}(Y)\rightarrow H^{2,0}(X)$ then this determines a unique Hodge isometry of $T_X$ and $T_Y$. If surfaces $X$ and $Y$ are defined over a number field $K$, then for any fixed prime $\ell$ we have cohomology groups $H^{2}_{et}(X_{\overline{K}},\mathbb{Q}_{\ell})$, $H^{2}_{et}(Y_{\overline{K}},\mathbb{Q}_{\ell})$. By the above result the subspaces of transcendental cycles in those groups are isomorphic as $\Gal(\overline{K}/K)$--modules by Artin comparison theorem.

Consider the Kummer surface $\textrm{Kum}(E_{1},E_{2})$ attached to a pair of curves \eqref{eq:E1_curve},\eqref{eq:E2_curve}. It can defined as a desingularisation of a double sextic
\[X_{7}:y^2=(x_{1}^3 - 2 x_{1}^2 + 1/2 (1 - S) x_{1}) (x_{2}^3 + 4 x_{2}^2 + 2 (1 + S) x_{2}).\]
We perform a change of variables $\psi_{8}$: $x_{1}=-1/2 (u - S v), x_{2}= u + S v$ and we get a new model
\begin{align*}
X_{8}:&\ y^2=\frac{-1}{8t^3}\left(t u^2+(1-t) v^2\right) \left(-2 (t-1) t ((u+4) u+6) v^2\right. \\
& \left. -8 (t-1) t (u+2) v+t \left(t (u+4) u (u+2)^2+4\right)+(t-1)^2 v^4\right).
\end{align*}
From this model we see that $\textrm{Kum}(E_{1},E_{2})$ is defined over $\mathbb{Q}(t)$, cf. \cite[\S 2]{Cynk_Schuett}.

Let $f(T)=(T^3 - 2 T^2 + 1/2 (1 - S) T)$ and $g(T)= (T^3 + 4 T^2 + 2 (1 + S) T)$.
We define $X_{6}$ as follows
\[X_{6}:Y^2 = X^3 - 2 g(x_{2}) X^2 + 1/2 (1 - S) g(x_{2})^2 X\]
and we have a map $\psi_{7}:X_{7}\rightarrow X_{6}$ such that $\psi_{7}(x_{2},y,x_{1})=(xg(x_{2}), yg(x_{2}), x_{2})=(X,Y,x_{2})$.

We define $X_{5}$ as follows
\[X_{5}:f(x_{1})- u^2 g(x_{2})=0\]
and we have a map $\psi_{6}:X_{6}\rightarrow X_{5}$ and $\psi_{6}(X,Y,x_{2})=(X/g(x_{2}), x_{2}, Y/g(x_{2})^2)=(x_{1},x_{2},u)$. 

We define $X_{4}$ as follows
\begin{align*}
X_{4}: &\ Y^2+X^3+X(16/3 (-25 + 9 S^2) u^4) \\
&-8 (-1 + S)^2 (1 + S) u^4 + 256/27 (49 - 81 S^2) u^6 + 512 (-1 + S) (1 + S)^2 u^8
\end{align*}
and we have a map $\psi_{5}:X_{5}\rightarrow X_{4}$ where \[\psi_{5}(x_{1},x_{2},u)=(A(x_{1},x_{2},u),B(x_{1},x_{2},u),u)=(X,Y,u)\]
\[
A(x_{1},x_{2},u)=\frac{2 u^2 \left(x_{1} (3 (S-1) (x_{2}+4)+16 x_{1})-12 (S+1) u^2 (2 S+x_{2} (x_{2}+4)+2)\right)}{3 x_{1}^2}
\]
\begin{align*}
B(x_{1},x_{2},u)&=-\frac{1}{x_{1}^3}(2 u^2 \left(16 (S+1)^2 u^4 (2 S+x_{2} (x_{2}+4)+2)\right.\\
&-4 u^2 x_{1} \left(x_{2} (S (S+4 x_{1}+8)+4 x_{1}-9)+\right.\\
&\left. \left. 4 (S+1) (2 S-(x_{1}-4) x_{1}-2)+2 (S-1) x_{2}^2\right)+(S-1) x_{1}^2 (S+2 x_{1}-1)\right))
\end{align*}
We define $X_{3}$ as follows
\begin{align*}
X_{3}: &\ y^2=x^3+(16/3 (-25 + 9 S^2) u^4)x+8 (-1 + S)^2 (1 + S) u^4 \\
&+ 256/27 (-49 + 81 S^2) u^6 - 512 (-1 + S) (1 + S)^2 u^8
\end{align*}
and we have a map $\psi_{4}:X_{4}\rightarrow X_{3}$ such that $\psi_{4}(X,Y,u)=(-X,Y,u)=(x,y,u)$. We define $X_{2}$ as follows
\begin{align*}
X_{2}: &\ y^2=x^3+(-(16/3) t^3 (9 + 16 t))x\\
&+\frac{8 t^4 (32 u^2 ((S+1) t (32 t+108 u^2-81)-54 u^2)+27)}{27 (S+1) u^2}
\end{align*}
and we have a map $\psi_{3}:X_{3}\rightarrow X_{2}$ such that
$\psi_{3}(x,y,u)=((t^2 x)/u^2, (t^3 y)/u^3, u)=(x,y,u)$.

We define $X_{1}$ is given by \eqref{eq:Inose_fibration} and we have a map $\psi_{2}:X_{2}\rightarrow X_{1}$ such that
$\psi_{2}(x,y,u)=(x, y, u^2 (1 + S))$.

We have two possible choices of $S$ because $S^2=\frac{t-1}{t}$ and with respect to this choice we form a map $\psi_{8}\ldots\circ\psi_{2}:X_{8}\rightarrow X_{1}$ which we call $\psi_{+}$ when $S=\sqrt{(t-1)/t}$ and $\psi_{-}$ for $S=-\sqrt{(t-1)/t}$. 

A minimal desingularisation of the sextic double cover $X_{8}$ is a surface $\tilde{X_{8}}$ defined over $\mathbb{Q}(t)$ hence our Kummer surface $Kum(E_{1},E_{2})$ has a smooth model defined over $\mathbb{Q}(t)$.
\begin{lemma}\label{lem:non_zero_diff}
	Let $\omega\in H^{2,0}(X_{2})$ be a non-zero regular form. Then the pullback form $(\psi_{+}+\psi_{-})^{*}\omega\in H^{2,0}(\tilde{X_{8}})$ is non-zero.
\end{lemma}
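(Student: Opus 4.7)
The plan is to show that each pullback $\psi_{\pm}^{*}\omega$ is individually a non-zero element of $H^{2,0}(\tilde{X_8})$, and then verify that the two contributions do not cancel in the sum.

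First I would observe that each $\psi_{\pm}$ is a dominant generically finite rational map between smooth K3 models, obtained as a composition of the dominant rational maps $\psi_2,\ldots,\psi_7,\psi_8$ exhibited in the excerpt. Hence $\psi_{\pm}^{*}\omega$ is a non-zero meromorphic 2-form on $\tilde X_8$, and the triviality $K_{\tilde X_8}=0$ forces any poles along exceptional divisors to vanish, so $\psi_{\pm}^{*}\omega$ lies in $H^{2,0}(\tilde X_8)\setminus\{0\}$. Fixing a generator $\Omega$ of the one-dimensional space $H^{2,0}(\tilde X_8)$, write $\psi_{\pm}^{*}\omega=c_{\pm}\Omega$ for non-zero scalars $c_{\pm}\in\mathbb{C}$; the lemma is equivalent to $c_{+}+c_{-}\neq 0$.

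The natural way to control $c_{+}+c_{-}$ is by direct computation. Take the canonical Weierstrass 2-form $\omega=du\wedge dx/y$ and propagate it back through the chain $\psi_2^{\pm},\psi_3^{\pm},\ldots,\psi_8^{\pm}$ using the explicit rational expressions in the excerpt, arriving at $\psi_{\pm}^{*}\omega=f_{\pm}(u,v)\,du\wedge dv$ on $X_8$. Because $S$ enters only through the substitutions $x_{1}=-(u-Sv)/2$, $x_{2}=u+Sv$ in $\psi_{8}^{\pm}$, through the scaling $u_1=u^{2}(1\pm S)$ in $\psi_{2}^{\pm}$, and through the Weierstrass coefficients of $X_{2},X_{3},X_{4}$, the full $S$-dependence of $f_{\pm}$ is at most linear over $\mathbb{Q}(t)(u,v)$. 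Thus $f_{\pm}=A\pm BS$ for some $A,B\in\mathbb{Q}(t)(u,v)$, giving $(\psi_{+}+\psi_{-})^{*}\omega=2A\,du\wedge dv$, and the claim reduces to $A\not\equiv 0$.

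The main obstacle is ruling out the accidental cancellation $A\equiv 0$. The most direct route is a single symbolic evaluation at a generic rational point $(t_0,u_0,v_0)$ lying outside the exceptional locus of each intermediate map. A more conceptual alternative, avoiding the explicit computation, is to note that the correspondence $\psi_{+}+\psi_{-}$ is invariant under $\sigma\in\mathrm{Gal}(\mathbb{Q}(S)/\mathbb{Q}(t))$, hence descends to a correspondence over $\mathbb{Q}(t)$, and by construction (as the Galois trace of $\psi_{+}$) coincides, up to a non-zero scalar, with the canonical Shioda--Inose degree 2 cover $\Kum(E_1,E_2)\to\Ino(E_1,E_2)$ provided by \cite{Shioda_Inose}. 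Since that cover induces the Hodge isometry $T_{\tilde X_8}(2)\cong T_{X_1}$ on transcendental lattices, it acts non-trivially on the one-dimensional Hodge summand $H^{2,0}$, forcing $2A\not\equiv 0$ and completing the proof.
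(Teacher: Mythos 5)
Your first route---the explicit pullback, organized as $f_{\pm}=A\pm BS$ with $A,B\in\mathbb{Q}(t)(u,v)$, followed by a verification that $A\not\equiv 0$---is precisely the paper's proof: the paper's entire argument is ``explicit pullback by the composition of maps,'' so your reduction via Galois conjugacy of $\psi_{+},\psi_{-}$ and the observation that the sum is $2A\,du\wedge dv$ is, if anything, a more structured account of that same computation. Note that, exactly as in the paper, the decisive step $A\not\equiv 0$ remains a symbolic evaluation; neither your write-up nor the paper's can dispense with it.

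The ``more conceptual alternative'' at the end, however, does not work, and you should not rely on it. The assertion that the trace correspondence $\Gamma_{\psi_{+}}+\Gamma_{\psi_{-}}$ ``coincides, up to a non-zero scalar, with the canonical Shioda--Inose degree 2 cover'' is exactly the statement to be proven, not a consequence of the construction. Each map $\psi_{\pm}$ individually is (a model of) such a cover, and each individually induces, after scaling, a Hodge isometry $T_{\tilde{X_8}}(2)\cong T_{X_1}$; but the correspondence acts on the one-dimensional space $H^{2,0}$ by the scalar $c_{+}+c_{-}$, and two maps which each act non-trivially can perfectly well sum to zero (compose one cover with a non-symplectic involution and you get exactly this). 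Concretely, since $c_{-}=\sigma(c_{+})$ once $\omega$ and a generator $\Omega$ are chosen over $\mathbb{Q}(t)$, cancellation occurs precisely when $c_{+}$ lies in the anti-invariant line $S\cdot\mathbb{Q}(t)$, and nothing in \cite{Shioda_Inose}---which concerns a single cover and says nothing about how it interacts with the involution $S\mapsto -S$---excludes this. So the alternative argument is circular, and the computation in your first route (or some genuinely new geometric input) is unavoidable.
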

\begin{proof}
	Explicit pullback by the composition of maps.
\end{proof}

Similar corollary is proved in \cite[Lemma 4.1]{Geemen_Top}, see also \cite[\S 2.4]{Geemen_Sarti_Nikulin}
\begin{corollary}
	Suppose that $\Gamma_{\psi_{+}}$ and $\Gamma_{\psi_{-}}$ are graphs of $\psi_{+}$ and $\psi_{-}$ respectively. Then a correspondence $\Gamma=\Gamma_{\psi_{+}}+\Gamma_{\psi_{-}}$ is defined over $\mathbb{Q}(t)$ and induces an isomorphism of transcendental part of
	$H^{2}_{et}((X_{2})_{\overline{\mathbb{Q}}},\mathbb{Q}_{\ell})$ and of
	$H^{2}_{et}(\tilde{X_{8}}_{\overline{\mathbb{Q}}},\mathbb{Q}_{\ell})$. This isomorphism respects the Galois action.
\end{corollary}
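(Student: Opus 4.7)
The plan is to verify the two claims of the corollary separately: that $\Gamma$ descends to $\mathbb{Q}(t)$, and that its $\ell$-adic realization restricted to transcendental parts is a Galois-equivariant isomorphism.

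For the descent, I would observe that all the intermediate maps $\psi_2,\ldots,\psi_8$ in the chain become defined over $\mathbb{Q}(S)=\mathbb{Q}(t)\bigl(\sqrt{(t-1)/t}\bigr)$ once a sign for $S$ is chosen, while the source $\tilde X_8$ is already defined over $\mathbb{Q}(t)$ (this was noted explicitly from the model $X_8$) and the target $X_2$ likewise has a $\mathbb{Q}(t)$-model. The nontrivial element $\tau$ of $\mathrm{Gal}(\mathbb{Q}(S)/\mathbb{Q}(t))$ sends $S\mapsto -S$ and, by inspection of the explicit coordinate formulas, interchanges $\psi_+$ and $\psi_-$. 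Hence $\tau\cdot\Gamma=\Gamma_{\psi_-}+\Gamma_{\psi_+}=\Gamma$, and since $\Gamma$ is trivially fixed by the rest of $\mathrm{Gal}(\overline{\mathbb{Q}(t)}/\mathbb{Q}(S))$, it descends by Galois descent for algebraic cycles to a cycle defined over $\mathbb{Q}(t)$.

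For the isomorphism assertion, I would first work over $\mathbb{C}$ and then transfer to $\ell$-adic cohomology via Artin's comparison theorem. Lemma \ref{lem:non_zero_diff} says that $(\psi_++\psi_-)^*$ is non-trivial on $H^{2,0}$. The Torelli-type facts recalled at the beginning of Section \ref{sec:Motive_descr} imply that the transcendental part of $H^2$ of a complex K3 surface is a simple sub-Hodge structure (the minimal sub-Hodge structure of $H^2$ whose complexification contains the period class), so any morphism of rational Hodge structures between the transcendental parts of two K3 surfaces that is non-zero on $H^{2,0}$ is automatically injective. Since the transcendental lattices of $X_2$ (birational to the Inose surface $X_1$) and of $\tilde X_8$ (the Kummer K3 attached to $E_1\times E_2$) have the same $\mathbb{Q}$-rank, related by the Shioda--Inose isometry $T_{X_1}(2)\cong T_{\tilde X_8}$, this injection is an isomorphism after tensoring with $\mathbb{Q}_\ell$. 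Galois-equivariance of the resulting map on $\ell$-adic cohomology is then automatic from the first step, since a $\mathbb{Q}(t)$-rational correspondence commutes with the action of $\mathrm{Gal}(\overline{\mathbb{Q}(t)}/\mathbb{Q}(t))$.

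The main obstacle I anticipate is the Hodge-theoretic step of promoting ``non-vanishing on $H^{2,0}$'' to ``isomorphism of transcendental parts''. This is where simplicity of the transcendental Hodge structure of a K3 surface and the equality of ranks of $T_{X_2}$ and $T_{\tilde X_8}$ via Shioda--Inose are both essential. Everything else, namely Galois descent for cycles and the comparison between Betti and $\ell$-adic realisations, is formal and standard.
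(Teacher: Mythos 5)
Your proposal is correct and its skeleton matches the paper's proof: descent of $\Gamma$ to $\mathbb{Q}(t)$ because $\psi_{+}$ and $\psi_{-}$ are interchanged by the nontrivial element of $\Gal(\mathbb{Q}(t)(S)/\mathbb{Q}(t))$, then Lemma \ref{lem:non_zero_diff} combined with Hodge theory, transferred to $\ell$--adic cohomology by the Artin comparison theorem. Where you genuinely differ is in justifying the crucial promotion from ``non-zero on $H^{2,0}$'' to ``isomorphism of transcendental parts'': the paper disposes of this in one line by invoking the Torelli theorem (more precisely, the uniqueness statement recalled at the opening of Section \ref{sub:Corresp}, that a map of K3 surfaces inducing a non-zero map on $H^{2,0}$ determines a Hodge isometry of transcendental lattices), whereas you argue via simplicity of the transcendental Hodge structure of a K3 surface, i.e.\ Schur's lemma in the semisimple category of polarizable rational Hodge structures, together with equality of ranks. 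Your route is more self-contained and arguably more careful, since Torelli as usually stated concerns Hodge isometries of the full $H^{2}$ lattice; note moreover that simplicity of the \emph{target} transcendental Hodge structure already forces surjectivity, so the rank count is not strictly needed. One factual slip, which does not damage the argument: $X_{2}$ is not birational to the Inose surface $X_{1}$ --- the map $\psi_{2}(x,y,u)=(x,y,u^{2}(1+S))$ has degree $2$, and $X_{2}$ is a model of the Kummer surface $\Kum(E_{1},E_{2})$; consequently $X_{2}$ and $\tilde{X_{8}}$ are birational models of the \emph{same} K3 surface, so the equality of ranks of $T_{X_{2}}$ and $T_{\tilde{X_{8}}}$ that you want is immediate (and also follows, as you say, from the Shioda--Inose isometry $T_{X_{1}}(2)\cong T_{\Kum(E_{1},E_{2})}$).
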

\begin{proof}
	The maps $\psi_{+}$ and $\psi_{-}$ are Galois conjugates so $\Gamma$ is defined over $\mathbb{Q}(t)$. From Lemma \ref{lem:non_zero_diff} it follows that (due to Torelli theorem of K3 surface) that the transcendental parts of $\ell$--adic cohomology of $X_{2}$ and $\tilde{X_{8}}$ map to each other.
\end{proof}

\subsection{Transcendental part}
Suppose now that $t\in\mathbb{Q}$ is such that curves \eqref{eq:E1_curve} and \eqref{eq:E2_curve} do not have complex multiplication. Consider the field $\mathbb{Q}(S)$ where $S=\sqrt{\frac{t-1}{t}}$.

For $S\in\mathbb{Q}$ it follows that $E_{1}$ and $E_{2}$ are $2$--isogenous over $\mathbb{Q}$ and from the construction in Section \ref{sub:Corresp} it follows that the transcendental part of $H_{t}$ is three--dimensional and isomorphic to $\Sym^{2}(H^{1}((E_{1})_{\overline{\mathbb{Q}}},\mathbb{Q}_{\ell} ))$. This is a similar situation to \cite{Geemen_Top}.

We assume that $S\notin\mathbb{Q}$ and $K=\mathbb{Q}(S)$. Let $G=\textrm{Gal}(\overline{\mathbb{Q}}/\mathbb{Q})$ be the absolute Galois group of $\mathbb{Q}$ and $N=\textrm{Gal}(\overline{\mathbb{Q}}/K)$ its normal subgroup of index $2$. Let $\sigma$ denote the unique automorphism in $G$ that represents nonzero class in $G/N$. The module $V=H^{1}_{et}((E_{1})_{\overline{\mathbb{Q}}},\mathbb{Q}_{\ell})$ is an $N$--module which is $2$--dimensional. We form its symmetric square $W=\textrm{Sym}^{2}V$.
By semisimplicity it is isomorphic to $\textrm{Sym}^{2}H^{1}_{et}((E_{1}^{(-2)})_{\overline{\mathbb{Q}}},\mathbb{Q}_{\ell})$ (we use the fact $\textrm{Sym}^{2}(\chi V) \cong \chi^{2}\textrm{Sym}^{2}(V)$ and our character is quadratic). 

Module $W^{\sigma}$ is equal to $\textrm{Sym}^{2}H^{1}_{et}((E_{1}^{\sigma})_{\overline{\mathbb{Q}}},\mathbb{Q}_{\ell})$. We have that $E_{1}^{\sigma} = E_{2}^{(-2)}$, hence  $W\cong W^{\sigma}$. This action is the natural conjugate action of $G/N$ on the representation of $N$. 

We consider the Frobenius reciprocity for a pair of groups $\Gg$, $\Hh$ where $\Hh$ is a finite index subgroup in $\Gg$. For a field $\Kk$ and $\Kk[\Hh]$ left module $A$ and $\Kk[\Gg]$ left module $B$ we consider the induction $\Ind_{\Hh}^{\Gg} = \Kk[\Gg]\otimes_{\Kk[\Hh]} A$ 
and coinduction $\coInd_{\Hh}^{\Gg}A = \Hom_{\Hh}(\Kk[\Gg],A)$. Since the groups are of finite index those modules are naturally isomorphic. 
Let $\langle B,B'\rangle$ be the $\Kk$--dimension of $\Hom_{\Gg}(B,B')$ for two $\Kk[\Gg]$--modules $B$ and $B'$. Frobenius reciprocity theorem implies that
\[\langle A,\Res_{\Hh} B\rangle_{\Hh}=\langle \Ind_{\Hh}^{\Gg}A, B\rangle_{\Gg}  \]
and complementary 
\[\langle \Res_{\Hh} B, A\rangle=\langle B, \coInd_{\Hh}^{\Gg}A\rangle.\]

Since $G$ over $N$ is of finite index a $G$--module $\Ind_{N}^{G}W = \mathbb{Q}_{\ell}[G]\otimes_{\mathbb{Q}_{\ell}[N]} W$ is canonically isomorphic to $\coInd_{N}^{G}W = \Hom_{H}(\mathbb{Q}_{\ell}[G],W)$, cf. \cite[Chap. III, Prop. 5.9]{Brown_Cohomology_of_Groups}. 

By Clifford theory, $\textrm{Ind}_{N}^{G}W$ is $6$--dimensional and equal to $\textrm{Ind}_{N}^{G}W^{\sigma}$. Assume that $\textrm{Ind}_{N}^{G}W$ is irreducible representation of $G$. From irreducibility assumption we get $1=\langle \textrm{Ind}_{N}^{G}W, \textrm{Ind}_{N}^{G}W\rangle$. Frobenius reciprocity implies that
\[\langle \textrm{Ind}_{N}^{G}W, \textrm{Ind}_{N}^{G}W\rangle= \langle W, \textrm{Res}_{N}\textrm{Ind}_{N}^{G}W\rangle.\]
Clifford theory implies that $\textrm{Res}_{N}\textrm{Ind}_{N}^{G}W=W\oplus W^{\sigma}$. But we have $W\cong W^{\sigma}$, hence we conclude with
\[1=\langle \textrm{Ind}_{N}^{G}W, \textrm{Ind}_{N}^{G}W\rangle=\langle W,W\oplus W\rangle=2\langle W,W\rangle = 2\]
a contradiction. We used the fact that $E_{1}$ is not $CM$, hence $W$ is irreducible. We conclude that $\textrm{Ind}_{N}^{G}W$ splits as a sum of two $3$--dimensional $G$--representations. 
\begin{proposition}
	Let $t\in\mathbb{Q}^{\times}$ be such that $E_{1}$, $E_{2}$ do not have complex multiplication. Then the transcendental part of $H^{2}_{et}(\tilde{V}_{t},\mathbb{Q}_{\ell})$ is a direct summand of $\textrm{Ind}_{N}^{G}W$ and is irreducible as $\mathbb{Q}_{\ell}[G]$--module.

	\hfill\qedsymbol
\end{proposition}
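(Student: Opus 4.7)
The plan is to reduce the proposition to the structural facts about $W$ and $\Ind_{N}^{G}W$ already assembled in the paragraph preceding the statement. Write $H^{tr}$ for the transcendental part of $H^{2}_{et}(\tilde{V}_{t},\mathbb{Q}_{\ell})$.

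First I would establish the Galois-module identification $\Res_{N} H^{tr} \cong W$, up to a Tate twist that I suppress. Birational invariance (over $\mathbb{Q}$) of the transcendental lattice of a smooth K3 surface identifies $H^{tr}$ with the transcendental part of $H^{2}_{et}(X_{2},\mathbb{Q}_{\ell})$, where $X_{2}$ is the Weierstrass model of Section~\ref{sub:Corresp}. The correspondence $\Gamma = \Gamma_{\psi_{+}} + \Gamma_{\psi_{-}}$ then yields an isomorphism of $\mathbb{Q}_{\ell}[G]$--modules $H^{tr} \cong H^{tr}(\tilde{X}_{8})$. After base change to $K$ the surface $\tilde{X}_{8}$ is the desingularised Kummer of $A = E_{1}\times E_{2}$: the $16$ exceptional $(-2)$-curves contribute to the algebraic part, and the remaining piece of $H^{2}$ is identified with $H^{2}(A_{\overline{\mathbb{Q}}},\mathbb{Q}_{\ell})$. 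Combining the K\"unneth decomposition with the $K$-rational isogeny $E_{1}\to E_{2}$ identifies $H^{1}(E_{1})\otimes H^{1}(E_{2}) \cong V^{\otimes 2} = \Sym^{2} V \oplus \wedge^{2} V$ as $N$-modules; the determinant summand $\wedge^{2} V \cong \mathbb{Q}_{\ell}(-1)$ is algebraic (it is the class of the graph of the isogeny), so the surviving transcendental part is exactly $W = \Sym^{2} V$.

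With this in hand, $G$-irreducibility is immediate: any $G$-invariant subspace $U \subseteq H^{tr}$ is in particular $N$-invariant, hence an $N$-submodule of $\Res_{N} H^{tr} \cong W$. Because $E_{1}$ has no complex multiplication, $V$ is irreducible as a $\mathbb{Q}_{\ell}[N]$-module, so $W = \Sym^{2} V$ is irreducible as well, and consequently $U$ equals $0$ or $H^{tr}$.

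For the direct-summand claim, Frobenius reciprocity gives
\[
\dim_{\mathbb{Q}_{\ell}} \Hom_{G}(\Ind_{N}^{G} W, H^{tr}) = \dim_{\mathbb{Q}_{\ell}} \Hom_{N}(W, W) = 1.
\]
By the discussion preceding the proposition, $\Ind_{N}^{G} W = A \oplus B$ with $A, B$ three-dimensional $G$-subrepresentations. Restricting to $N$ gives $\Res_{N} A \oplus \Res_{N} B \cong W \oplus W^{\sigma} \cong W \oplus W$, and since $W$ is $N$-irreducible of dimension $3$ this forces $\Res_{N} A \cong W \cong \Res_{N} B$. A $G$-equivariant map $A \to H^{tr}$ restricts to an element of $\Hom_{N}(W, W) = \mathbb{Q}_{\ell}$, hence is either zero or an isomorphism, and the same holds for $B$. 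Since $\Hom_{G}(\Ind_{N}^{G} W, H^{tr}) = \Hom_{G}(A, H^{tr}) \oplus \Hom_{G}(B, H^{tr})$ is one-dimensional, exactly one of $A$, $B$ is isomorphic to $H^{tr}$, exhibiting $H^{tr}$ as a direct summand of $\Ind_{N}^{G} W$.

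The principal obstacle is the first step: making the identification $\Res_{N} H^{tr} \cong W$ fully Galois-equivariant, with the correct Tate twist, requires careful bookkeeping for how $H^{2}$ of the resolved Kummer surface decomposes into contributions from the $16$ exceptional curves and from the underlying abelian surface, and for verifying that the determinant summand $\wedge^{2} V$ actually lands in the algebraic part. The remaining steps are essentially formal, relying only on Frobenius reciprocity and the $N$-irreducibility of $W$ in the non-CM case.
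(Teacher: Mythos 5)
Your proposal is correct and takes essentially the same route as the paper: you identify the restriction of the transcendental part to $N$ with $W=\Sym^{2}V$ via the correspondences of Section~\ref{sub:Corresp} and the Kummer/K\"unneth decomposition, and then both claims follow from Frobenius reciprocity, Clifford theory, and the splitting of $\Ind_{N}^{G}W$ into two three--dimensional summands established in the paragraph preceding the statement. The only nuance is your implication ``$V$ irreducible, hence $\Sym^{2}V$ irreducible,'' which is not valid in general (it fails for induced, i.e.\ CM--type, representations); what is actually needed --- and what the paper likewise invokes from the non--CM hypothesis --- is that the Galois image is open, giving absolute irreducibility of $W$ and $\End_{N}(W)=\mathbb{Q}_{\ell}$, which your Frobenius--reciprocity count also tacitly uses.
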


In fact, for $t$ such that $E_{1}$ and $E_{2}$ are curves with complex multiplication the transcendental part of $H_{t}$ is described in a similar way but it is only a rank $2$ irreducible submodule in $\Sym^{2}(H^{1}((E_{1})_{\overline{\mathbb{Q}}},\mathbb{Q}_{\ell} ))$, cf. Lemma \ref{lem:CM_Picard_lattice}.

\subsection{Definition of a motive $H(\Phi_{2}\Phi_{4},\Phi_{1}^{3})$}
We consider the category $\mathcal{M}$ of effective Chow motives \cite[\S 1]{KMP}. To a smooth projective surface $S$ we can attach its Chow motive $(S,1_{S},0)$. The diagonal $[\Delta_{S}]$ has a decomposition $\sum_{i=0}^{4}\pi_{i}$ with projectors $\pi_{i}$ defining $h_{i}(S)=(S,\pi_{i},0)$. Following \cite[Prop. 2.1]{KMP} there is a Chow--K\"{u}nneth decomposition of $h(S)=\oplus h_{i}(S)$. The motive $h_{2}(S)$ decomposes further into $h_{2}^{alg}(S)\oplus h_{2}^{tr}(S)$ where $h_{2}^{alg}(S)$ is an effective Chow motive defined by idemponent
\[\pi_{2}^{alg} = \sum_{h=1}^{\rho} \frac{[D_{h}\times D_{h}]}{D_{h}^2} \in A_{2}(S\times S)\]
and $\rho=\rho(S)$ is the rank of the N\'{e}ron--Severi group of $S$ and $\{D_{h}\}$ form an orthogonal basis of $NS(S)\otimes\mathbb{Q}$. 

For $S=\tilde{V}_{t^{-1}}$ a smooth projective K3 surface model of $V_{t^{-1}}$ we consider a direct summand of the idemponent $\pi_{2}^{alg}$ which is defined by using the orthogonal basis on the part of the N\'{e}ron--Severi group of $S$ for which we have the isomorphism to $E_{8}(-1)^2\oplus U\oplus \langle -4\rangle$. This constitutes an idemponent of rank $19$. For $t$, non--CM values, for $E_{1}$, $E_{2}$ this is exactly $\pi_{2}^{alg}$ and we define in this case 
\[H(\Phi_{2}\Phi_{4},\Phi_{1}^{3}|t) = h_{2}^{tr}(S).\]
For $t$, a CM--value, we define it to be
\[H(\Phi_{2}\Phi_{4},\Phi_{1}^{3}|t) =(S,\frac{[D_{20}\times D_{20}]}{D_{20}^2},0)\oplus h_{2}^{tr}(S)\]
where $D_{20}$ is a complementary vector in $\NS(S)\otimes\mathbb{Q}$ which complements the vectors forming a $E_{8}(-1)^2\oplus U\oplus \langle -4\rangle$ subspace.

\begin{theorem}
	Let $t\in\mathbb{Q}^{\times}$ and consider an $\ell$--adic realisation of the motive $H(\Phi_{2}\Phi_{4},\Phi_{1}^{3}|t)$. Then the trace of geometric Frobenius at almost all primes $p\neq\ell$ is given by $H_{p}(\frac{1}{4},\frac{1}{2},\frac{3}{4};0,0,0|t)$.
\end{theorem}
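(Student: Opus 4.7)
The plan is to identify the $\ell$--adic realisation of $H(\Phi_{2}\Phi_{4},\Phi_{1}^{3}|t)$ with a Galois submodule of $H^{2}_{et}(\tilde{V}_{t^{-1}},\mathbb{Q}_{\ell})$ whose Frobenius trace is already captured by Corollary \ref{cor:Trace_explicit}, and then to match the resulting expression with the hypergeometric sum by unpacking \eqref{eq:hyper_sum} in the specific case $\alpha=(1/4,1/2,3/4)$, $\beta=(0,0,0)$. The shift between the two sides is that the Chow motive is built from $\tilde{V}_{t^{-1}}$ while the hypergeometric function carries the argument $t$; this is precisely what converts $\omega(1/(256\,t^{-1}))^{m}$ into $\omega(t/256)^{m}$.

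First I would verify that, for $S=\tilde{V}_{t^{-1}}$, the realisation of $H(\Phi_{2}\Phi_{4},\Phi_{1}^{3}|t)$ is the orthogonal complement inside $H^{2}_{et}(S_{\overline{\mathbb{Q}}},\mathbb{Q}_{\ell})$ of the image of the canonical sublattice $E_{8}(-1)^{2}\oplus U\oplus\langle -4\rangle$ of $\NS(S)$ from Theorem \ref{thm:Neron_Severi_non_CM}. In the non--CM case this complement is the three--dimensional transcendental part $H^{tr}_{t^{-1}}$; in the CM case it contains in addition the line spanned by the algebraic class $D_{20}$. Since K3 surfaces are minimal, the birational model $\mathcal{E}_{t^{-1}}$ of Section \ref{sec:params} is isomorphic to $\tilde{V}_{t^{-1}}$, and the induced decompositions of $H^{2}_{et}$ correspond. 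For a prime $p\neq\ell$ of good reduction for $\mathcal{E}_{t^{-1}}$, Lemmas \ref{lem:Trace_formula} and \ref{lem:CM_Picard_lattice} identify the Frobenius trace at $p$ on this realisation with $t_{t^{-1}}(1)+d_{t^{-1}}(1)$, where the summand $d_{t^{-1}}(1)=\chi(p)p$ appears precisely in the CM case and encodes the action on $D_{20}$.

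Next I would apply Corollary \ref{cor:Trace_explicit} with parameter $t^{-1}$ in place of $t$ and with $n=1$; the identity $\omega(1/(256\,t^{-1}))^{m}=\omega(t/256)^{m}$ then yields
\[ t_{t^{-1}}(1)+d_{t^{-1}}(1)=-\frac{1}{p}+\frac{1}{p(p-1)}\sum_{m=0}^{p-2}g(4m)g(-m)^{4}\omega(t/256)^{m}. \]
It remains to recognise this expression as $H_{p}(\frac{1}{4},\frac{1}{2},\frac{3}{4};0,0,0|t)$. Unpacking \eqref{eq:Polynomials_roots_formula} for $\alpha=(1/4,1/2,3/4)$, $\beta=(0,0,0)$ gives $(x^{4}-1)/(x-1)^{4}$, hence $r=1$, $s=4$, $p_{1}=4$, $q_{1}=\dots=q_{4}=1$, $M=256$, $\epsilon=1$ and $(-1)^{r+s}=-1$; the common divisor $D(x)=x-1$ gives $s(0)=1$ and $s(m)=0$ for $m\neq 0$. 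Isolating the $m=0$ term $g(0)^{5}=-1$ and regrouping reduces \eqref{eq:hyper_sum} to the displayed formula; this is a mechanical algebraic manipulation that absorbs the factor $\frac{-1}{1-p}$ into the two pieces $-\frac{1}{p}$ and $\frac{1}{p(p-1)}$.

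The principal obstacle is not this last matching but the bookkeeping in the identification step: one must argue that the Chow projector chosen in Section \ref{sec:Motive_descr} actually selects the subspace whose trace is governed by $t(n)+d(n)$, and in the CM case one must verify that the quadratic character $\chi$ of Lemma \ref{lem:CM_Picard_lattice} describes the Frobenius action on the $D_{20}$ line consistently across the finite exceptional set from Tables \ref{tab:rat_cm} and \ref{tab:quad_cm}. The phrase ``almost all primes'' then absorbs the prime $\ell$, the prime $2$, and the finitely many primes of bad reduction for $\mathcal{E}_{t^{-1}}$, leaving the stated identity valid for all remaining $p$.
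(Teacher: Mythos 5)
Your proposal is correct and follows essentially the same route as the paper: the paper's proof is precisely the combination of Corollary \ref{cor:Trace_explicit} with formula \eqref{eq:hyper_sum}, identifying the sum with the Frobenius trace on the complement of the $19$ cycles spanning $E_{8}(-1)^2\oplus U\oplus\langle -4\rangle$ in $\NS(\tilde{V}_{t^{-1}})$. Your write-up merely makes explicit the bookkeeping the paper leaves implicit (the $t\mapsto t^{-1}$ twist via $\omega(1/(256\,t^{-1}))^{m}=\omega(t/256)^{m}$, the unpacking of \eqref{eq:hyper_sum} with $p_1=4$, $q_i=1$, $M=256$, $s(0)=1$, $g(0)^5=-1$, and the CM case via $D_{20}$ and $\chi$), all of which checks out.
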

\begin{proof}
	We combine Corollary \ref{cor:Trace_explicit} with formula \eqref{eq:hyper_sum} and observe that the sum on the left in Corollary \ref{cor:Trace_explicit} exactly corresponds to the trace formula of the complement of $19$ cycles in $\NS(\tilde{V}_{t^{-1}})$ corresponding to the sublattice $E_{8}(-1)^2\oplus U\oplus \langle -4\rangle$.
\end{proof}

\section{Remarks and questions}\label{sec:remarks}
\subsection{Universal family over $X_{0}(2)$}
We consider a modular curve $X_{0}(2)$ as the moduli space of pairs $(E,E\rightarrow E')$ where $E$ is a general curve $E: y^2=x^3+ax^2+b$ with a two--torsion point and $E\rightarrow E'$ is the induced two--isogeny to $E':y^2=x^3-2ax^2+(a^2-4b)x$. The forgetful map $j:X_{0}(2)\rightarrow X_{0}(1)$ is $j((E,\phi))=j(E)$. If we put $u=\frac{256 b}{a^2-4b}$ then we have $j(u)=\frac{(u+256)^3}{u^2}$.

We observe that
\[j\left(-\frac{64(1+s)}{-1+s}\right)= j(y^2=x^3-2x^2+1/2(1-s)x)\]
and
\[j\left(-\frac{64(-1+s)}{1+s}\right)= j(y^2=x^3+4x^2+2(1+s)x).\]
and hence if we put $s=\frac{-a^2+8b}{a^2}$ then we get the $j$-invariants of curves $E_{1}$ and $E_{2}$ respectively. We can pick a rational parameter $t=\frac{a^4}{16(a^2-4b)b}$ then $s^2=\frac{t-1}{t}$. 

\bigskip
Question: Can we use that to give a general hypergeometric trace formula for any K3 surface attached to a pair of two--isogenous curves?

\bibliographystyle{plain}

\begin{thebibliography}{10}
	
	\bibitem{SGA_4_part3}
	{\em Th\'eorie des topos et cohomologie \'etale des sch\'emas. {T}ome 3}.
	\newblock Lecture Notes in Mathematics, Vol. 305. Springer-Verlag, Berlin-New
	York, 1973.
	\newblock S{\'e}minaire de G{\'e}om{\'e}trie Alg{\'e}brique du Bois-Marie
	1963--1964 (SGA 4), Dirig{\'e} par M. Artin, A. Grothendieck et J. L.
	Verdier. Avec la collaboration de P. Deligne et B. Saint-Donat.
	
	\bibitem{BHPV_book}
	Wolf~P. Barth, Klaus Hulek, Chris A.~M. Peters, and Antonius Van~de Ven.
	\newblock {\em Compact complex surfaces}, volume~4 of {\em Ergebnisse der
		Mathematik und ihrer Grenzgebiete. 3. Folge. A Series of Modern Surveys in
		Mathematics [Results in Mathematics and Related Areas. 3rd Series. A Series
		of Modern Surveys in Mathematics]}.
	\newblock Springer-Verlag, Berlin, second edition, 2004.
	
	\bibitem{Beukers_Cohen_Mellit}
	F.~{B}eukers, H.~{C}ohen, and A.~{M}ellit.
	\newblock Finite hypergeometric functions.
	\newblock {\em ArXiv e-prints, 1505.02900}, 2015.
	
	\bibitem{Beukers_Heckman}
	F.~Beukers and G.~Heckman.
	\newblock Monodromy for the hypergeometric function {$_nF_{n-1}$}.
	\newblock {\em Invent. Math.}, 95(2):325--354, 1989.
	
	\bibitem{MAGMA}
	Wieb Bosma, John Cannon, and Catherine Playoust.
	\newblock The {M}agma algebra system. {I}. {T}he user language.
	\newblock {\em J. Symbolic Comput.}, 24(3-4):235--265, 1997.
	\newblock Computational algebra and number theory ({L}ondon, 1993).
	
	\bibitem{Brown_Cohomology_of_Groups}
	Kenneth~S. Brown.
	\newblock {\em Cohomology of groups}, volume~87 of {\em Graduate Texts in
		Mathematics}.
	\newblock Springer-Verlag, New York, 1994.
	\newblock Corrected reprint of the 1982 original.
	
	\bibitem{Cynk_Schuett}
	S{\l}awomir Cynk and Matthias Sch{\"u}tt.
	\newblock Generalised {K}ummer constructions and {W}eil restrictions.
	\newblock {\em J. Number Theory}, 129(8):1965--1975, 2009.
	
	\bibitem{Daniels_Robledo}
	Harris~B. Daniels and {\'A}lvaro Lozano-Robledo.
	\newblock On the number of isomorphism classes of {CM} elliptic curves defined
	over a number field.
	\newblock {\em J. Number Theory}, 157:367--396, 2015.
	
	\bibitem{Dolgachev_Mirror_symmetry}
	I.~V. Dolgachev.
	\newblock Mirror symmetry for lattice polarized {$K3$} surfaces.
	\newblock {\em J. Math. Sci.}, 81(3):2599--2630, 1996.
	\newblock Algebraic geometry, 4.
	
	\bibitem{Doran_Whitcher}
	C.~F. {Doran}, T.~L. {Kelly}, A.~{Salerno}, S.~{Sperber}, J.~{Voight}, and
	U.~{Whitcher}.
	\newblock Zeta functions of alternate mirror Calabi-Yau families.
	\newblock {\em ArXiv e-prints, 1612.09249}, 2016.
	
	\bibitem{Schutt_Elkies}
	Noam {Elkies} and Matthias {Sch{\"u}tt}.
	\newblock K3 families of high {P}icard rank.
	\newblock {\em unpublished}.
	
	\bibitem{Fedorov_Hodge}
	Roman {Fedorov}.
	\newblock {Variations of Hodge structures for hypergeometric differential
		operators and parabolic Higgs bundles}.
	\newblock {\em ArXiv e-prints}, May 2015.
	
	\bibitem{Greene}
	John Greene.
	\newblock Hypergeometric functions over finite fields.
	\newblock {\em Trans. Amer. Math. Soc.}, 301(1):77--101, 1987.
	
	\bibitem{Hartshorne_Algebraic_geometry}
	Robin Hartshorne.
	\newblock {\em Algebraic geometry}.
	\newblock Springer-Verlag, New York, 1977.
	\newblock Graduate Texts in Mathematics, No. 52.
	
	\bibitem{Ireland_Rosen}
	Kenneth Ireland and Michael Rosen.
	\newblock {\em A classical introduction to modern number theory}, volume~84 of
	{\em Graduate Texts in Mathematics}.
	\newblock Springer-Verlag, New York, second edition, 1990.
	
	\bibitem{KMP}
	Bruno Kahn, Jacob~P. Murre, and Claudio Pedrini.
	\newblock On the transcendental part of the motive of a surface.
	\newblock In {\em Algebraic cycles and motives. {V}ol. 2}, volume 344 of {\em
		London Math. Soc. Lecture Note Ser.}, pages 143--202. Cambridge Univ. Press,
	Cambridge, 2007.
	
	\bibitem{Kani_Moduli_spaces}
	Ernst Kani.
	\newblock The moduli spaces of {J}acobians isomorphic to a product of two
	elliptic curves.
	\newblock {\em Collect. Math.}, 67(1):21--54, 2016.
	
	\bibitem{Katz_Exponential}
	Nicholas~M. Katz.
	\newblock {\em Exponential sums and differential equations}, volume 124 of {\em
		Annals of Mathematics Studies}.
	\newblock Princeton University Press, Princeton, NJ, 1990.
	
	\bibitem{Kumar_Kuwata}
	A.~{K}umar and M.~{K}uwata.
	\newblock Elliptic K3 surfaces associated with the product of two elliptic
	curves: Mordell-Weil lattices and their fields of definition.
	\newblock {\em ArXiv e-prints}, pages 1--34, 2014.
	
	\bibitem{Maulik_Poonen_Picard_jumps}
	Davesh Maulik and Bjorn Poonen.
	\newblock N\'eron-{S}everi groups under specialization.
	\newblock {\em Duke Math. J.}, 161(11):2167--2206, 2012.
	
	\bibitem{Milne_etale_book}
	James Milne.
	\newblock {\em \'{E}tale cohomology}, volume~33 of {\em Princeton Mathematical
		Series}.
	\newblock Princeton University Press, Princeton, N.J., 1980.
	
	\bibitem{Morrison_K3}
	D.~R. Morrison.
	\newblock On {$K3$} surfaces with large {P}icard number.
	\newblock {\em Invent. Math.}, 75(1):105--121, 1984.
	
	\bibitem{Shiga_Mirror}
	Norihiko Narumiya and Hironori Shiga.
	\newblock The mirror map for a family of {$K3$} surfaces induced from the
	simplest 3-dimensional reflexive polytope.
	\newblock In {\em Proceedings on {M}oonshine and related topics ({M}ontr\'eal,
		{QC}, 1999)}, volume~30 of {\em CRM Proc. Lecture Notes}, pages 139--161.
	Amer. Math. Soc., Providence, RI, 2001.
	
	\bibitem{Nikulin_Autom_K3}
	V.~V. Nikulin.
	\newblock Finite groups of automorphisms of {K}\"ahlerian surfaces of type
	{$K3$}.
	\newblock {\em Uspehi Mat. Nauk}, 31(2(188)):223--224, 1976.
	
	\bibitem{Oguiso_c2}
	Keiji Oguiso.
	\newblock An elementary proof of the topological {E}uler characteristic formula
	for an elliptic surface.
	\newblock {\em Comment. Math. Univ. St. Paul.}, 39(1):81--86, 1990.
	
	\bibitem{Shioda_Inose}
	T.~Shioda and H.~Inose.
	\newblock On singular {$K3$} surfaces.
	\newblock pages 119--136, 1977.
	
	\bibitem{Shioda_Mordell_Weil}
	Tetsuji Shioda.
	\newblock On the {M}ordell-{W}eil lattices.
	\newblock {\em Comment. Math. Univ. St. Paul.}, 39(2):211--240, 1990.
	
	\bibitem{Shioda_Correspondence}
	Tetsuji Shioda.
	\newblock Correspondence of elliptic curves and {M}ordell-{W}eil lattices of
	certain elliptic {$K3$}'s.
	\newblock In {\em Algebraic cycles and motives. {V}ol. 2}, volume 344 of {\em
		London Math. Soc. Lecture Note Ser.}, pages 319--339. Cambridge Univ. Press,
	Cambridge, 2007.
	
	\bibitem{Shioda_K3_surface_and_sphere_packings}
	Tetsuji Shioda.
	\newblock {$K3$} surfaces and sphere packings.
	\newblock {\em J. Math. Soc. Japan}, 60(4):1083--1105, 2008.
	
	\bibitem{Shioda_Schutt}
	Tetsuji Shioda and Matthias Sch{\"u}tt.
	\newblock Elliptic surfaces.
	\newblock {\em ArXiv e-prints, 0907.0298}, 2010.
	
	\bibitem{Silverman_Advanced}
	Joseph Silverman.
	\newblock {\em Advanced topics in the arithmetic of elliptic curves}, volume
	151 of {\em Graduate Texts in Mathematics}.
	\newblock Springer-Verlag, New York, 1994.
	
	\bibitem{Geemen_Sarti_Nikulin}
	Bert van Geemen and Alessandra Sarti.
	\newblock Nikulin involutions on {$K3$} surfaces.
	\newblock {\em Math. Z.}, 255(4):731--753, 2007.
	
	\bibitem{Geemen_Top}
	Bert van Geemen and Jaap Top.
	\newblock An isogeny of {$K3$} surfaces.
	\newblock {\em Bull. London Math. Soc.}, 38(2):209--223, 2006.
	
\end{thebibliography}

\end{document}